\theoremstyle{definition}
\newtheorem{definition}{Definition}[section]
\newtheorem{remark}[definition]{Remark}
\newtheorem*{example}{Example}
\newtheorem{lemma}[definition]{Lemma}
\newtheorem{theorem}[definition]{Theorem}
\newtheorem{theoremx}{Theorem}
\newtheorem{prop}[definition]{Proposition}
\author{Isaac Moselle}
\address{Centre for Mathematical Sciences\\ University of Cambridge\\ Wilberforce Rd\\Cambridge, CB3 0WA\\ UK}
\email{im472@cam.ac.uk}
\title{Homological stability for Iwahori-Hecke algebras of type $B_n$}
\subjclass[2020]{20J06, 16E40 (primary), 20F36 (secondary) .}
\keywords{Homological stability, Iwahori-Hecke algebras, injective words.}
\begin{document}
\maketitle
\begin{abstract}
    We show that homological stability holds for the family of Iwahori-Hecke algebras of type $B_n$, where homology is identified with the relevant Tor group. This family of algebras is related to the Coxeter groups of type $B_n$, which are groups of signed permutations. The result builds on Hepworth's result for Iwahori-Hecke algebras of type $A_n$.
\end{abstract}
\section{Introduction}

\subsection{Homological stability}
A family of groups 
\[ G_0 \hookrightarrow{} G_1 \hookrightarrow{} G_2 \hookrightarrow{} \ldots\]
is said to exhibit \emph{homological stability} if for each $d$ the map
$\text{H}_d(G_{n-1}) \longrightarrow{} \text{H}_d(G_{n})$ is an isomorphism for $n$ sufficiently large. Prominent examples of sequences satisfying this include the symmetric groups $\mathfrak{S}_n$ (\cite{nakaoka1960decomposition}), automorphism groups of free groups Aut($F_n$) (\cite{hatcher1995homological}), and the linear groups $\text{GL}_n(R)$ over a ring (\cite{van1980homology}). A more complete survey of such results can be found in \cite{wahl2022homological}.

Recall that the homology of a group $G$ with coefficients in a commutative ring $R$ may be identified with $\text{Tor}_*^{RG}(\mathbb{1}, \mathbb{1})$ for $RG$ the group ring and $\mathbb{1}$ the trivial module ($R$ with trivial action). If $A$ is any $R$-algebra with a natural choice of trivial module $\mathbb{1}$, we may then define the \emph{algebra homology} of $A$ to be $\text{Tor}_*^{A}(\mathbb{1}, \mathbb{1})$.

\begin{definition}
A family of algebras $A_1 \hookrightarrow A_2 \hookrightarrow \ldots$ exhibits homological stability if for each $d$ the \emph{stabilisation map}
\[ \text{Tor}_d^{A_{n-1}}(\mathbb{1}, \mathbb{1}) \xrightarrow{} \text{Tor}_d^{A_{n}}(\mathbb{1}, \mathbb{1})\]
is an isomorphism for $n$ sufficiently large.
\end{definition}
\begin{remark}
For each $A_n$ we have picked a trivial module. We require also that the map $A_{i-1} \hookrightarrow A_i$ commutes with the action of each algebra on its trivial module. In practice, the choice of trivial module for the algebras we will consider is natural enough that they all evidently commute with the map. 
\end{remark}

In this paper we discuss this phenomenon in the context of \emph{Iwahori-Hecke algebras} of type $B_n$. Iwahori-Hecke algebras (to be defined in full later) are certain deformations of the group algebra of Coxeter groups, and are prevalent both in algebra and topology. They are used to define the \verb|HOMFLY-PT| polynomial in knot theory (\cite{jones1987hecke}), which is a generalisation of several knot polynomial invariants. They also arise naturally in the study of the representation theory of both $\text{GL}_n(\mathbb{F}_q)$ (\cite{mathas1999iwahori}) and Coxeter groups (\cite{kazhdan1979representations}). The representation theory of Iwahori-Hecke algebras is much studied (\cite{geck}), and has connections with quantum groups.

The family of Iwahori-Hecke algebras that we study are associated with the Coxeter group of type $B_n$ (also known as a \emph{hyperoctahedral} group), henceforth referred to as $\mathcal{B}_n$ (so that we may later distinguish between the group and the associated Coxeter diagram). The discussion of $\mathcal{B}_n$ as a Coxeter group appears in Section \ref{background-section}, but it will be more productive throughout to identify it with the group of \emph{signed permutations}.

\begin{definition}
Let $n\geq 1$. The group of signed permutations is the subgroup of $GL_n(\mathbb{Z})$ comprising of matrices with exactly one non-zero entry in each row and column, and with all non-zero entries $\in \{ \pm 1 \}$. Alternatively, it is the wreath product of $C_2$ with $\mathfrak{S}_n$.
\end{definition}

Elements of $\mathcal{B}_n$ may thus be thought of as permutations with an accompanying sign change in $(\mathbb{Z}/2\mathbb{Z})^n$, and the same notations and manipulations can be used as for $\mathfrak{S}_n$. This provides a convenient combinatorial viewpoint, some of which carries over to the associated Iwahori-Hecke algebras $\mathcal{HB}_n$ (defined in full later). Following the combinatorial viewpoint as a guide, we prove the algebras $\mathcal{HB}_n$ satisfy homological stability:

\begin{theoremx}\label{main-thm}
The map 
\[ \text{Tor}_d^{\mathcal{HB}_{n-1}}(\mathbb{1}, \mathbb{1}) \longrightarrow{}\text{Tor}_d^{\mathcal{HB}_{n}}(\mathbb{1}, \mathbb{1}) \]
is an isomorphism for $2d \leq n-1$.

\end{theoremx}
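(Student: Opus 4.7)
The plan is to adapt Hepworth's proof of Theorem \ref{hom-stab-an} to the signed setting, with the combinatorial input being \emph{signed injective words}: sequences $(a_0,\ldots,a_p)$ of elements of $\{\pm 1,\ldots,\pm n\}$ whose absolute values are pairwise distinct. The group $\mathcal{B}_n$ acts naturally by signed permutation, and the stabiliser of $(1,2,\ldots,p+1)$ is the standard copy of $\mathcal{B}_{n-p-1}$; the set of signed injective words of length $p+1$ therefore becomes a free $R\mathcal{B}_n$-module isomorphic to $R\mathcal{B}_n \otimes_{R\mathcal{B}_{n-p-1}} \mathbb{1}$. With face maps given by letter deletion, this assembles into an augmented semi-simplicial $R\mathcal{B}_n$-module which I would then deform to an augmented semi-simplicial $\mathcal{HB}_n$-module $W_\bullet \to \mathbb{1}$ with $W_p \cong \mathcal{HB}_n \otimes_{\mathcal{HB}_{n-p-1}} \mathbb{1}$, using a $q$-signed-permutation action in place of the classical $\mathcal{B}_n$-action.

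The proof then proceeds in three steps. First, show that the augmented chain complex $C_*(W_\bullet) \to \mathbb{1}$ is acyclic in a range growing like $n/2$, producing a partial resolution of $\mathbb{1}$ by induced modules $\mathcal{HB}_n \otimes_{\mathcal{HB}_{n-p-1}} \mathbb{1}$. Second, apply $\mathbb{1} \otimes_{\mathcal{HB}_n} -$ to obtain a double complex whose associated spectral sequence, via Shapiro's Lemma, has $E^1$-page identified with $\mathrm{Tor}^{\mathcal{HB}_{n-p-1}}_*(\mathbb{1},\mathbb{1})$ in column $p$ in the relevant range. Third, compare this spectral sequence with the analogous one for $\mathcal{HB}_{n-1}$ via the stabilisation map, so that the inductive hypothesis on $n$ identifies all entries outside the top-left corner with their counterparts for $\mathcal{HB}_{n-1}$; a standard diagonal argument then delivers the stability range $2d \le n-1$.

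The main obstacle is the first step: constructing an explicit $q$-deformed contracting homotopy for the signed injective word complex. At $q=1$ the required acyclicity is classical (one can adapt the ``letter insertion'' argument used for ordinary injective words, or appeal to shellability). Promoting this to a chain-level homotopy valued in $\mathcal{HB}_n$, as Hepworth does in type $\mathcal{A}$, requires a Hecke-valued insertion operator whose verification of the contracting identities splits into cases by the sign of the inserted letter and must interact correctly with the quadratic relation at the sign-change generator of $\mathcal{HB}_n$. I expect Hepworth's insertion homotopy to carry over after modification, but the bookkeeping around the extra generator and its distinct quadratic relation is where the real work lies. Once the resolution is in hand, the second and third steps follow Hepworth's template essentially formally, modulo a naturality check that the signed-injective-word resolutions are compatible with the stabilisation map $\mathcal{HB}_{n-1} \hookrightarrow \mathcal{HB}_n$.
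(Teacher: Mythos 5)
Your overall architecture matches the paper: build a semi-simplicial $\mathcal{HB}_n$-module (equivalently a chain complex $\mathcal{D}^\pm(n)$) with $\mathcal{D}^\pm(n)_r = \mathcal{HB}_n \otimes_{\mathcal{HB}_{n-r-1}} \mathbb{1}$ modelled on signed injective words, prove it is highly acyclic, and then feed it into the standard Quillen-style spectral sequence argument to get stability in the range $2d \le n-1$. Steps two and three of your outline (Shapiro identification of the $E^1$-page, and the inductive diagonal argument) are exactly what appears in Section~7 of the paper and are indeed essentially formal once the acyclicity is in hand.

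Where you diverge is in the key acyclicity step, and this is also where the gap is. You propose to adapt Hepworth's type-$\mathcal{A}$ contracting homotopy (the Hecke-valued ``letter insertion'' operator) directly to the signed complex, and you are candid that you have not done this: you say you ``expect'' it to carry over but that the bookkeeping around the new generator $U$ and its quadratic relation is ``where the real work lies.'' That is not a proof of the one step that actually carries all the content of the theorem. Moreover, it is genuinely unclear that a naive adaptation works: the generator $u$ does not delete a letter but flips its sign, so the combinatorics of insertion is more intricate, and the interaction of the new quadratic relation with the insertion operator is exactly the kind of thing that tends to break such formulas. The paper avoids this entirely by a different strategy: it filters $\mathcal{D}^\pm(n)$ by the position of the leftmost negative letter (Section~5), using distinguished coset representatives $v(\mathbf{m})$ for $\mathfrak{S}_n \backslash \mathcal{B}_n$ obtained from the Mackey decomposition (Section~4). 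It then identifies each filtration quotient $F_p/F_{p-1}$ with a direct sum of $2^{p-1}$ suspensions of a reindexed type-$\mathcal{A}$ complex $\mathcal{D}(n-p)$ (Propositions~\ref{binomial-decomp} and~\ref{mt-iso}), so that high acyclicity of $\mathcal{D}^\pm(n)$ is \emph{reduced} to Hepworth's already-established acyclicity of $\mathcal{D}(n-p)$, rather than reproved from scratch. This bootstrapping is the central new idea of the paper, and it sidesteps precisely the ``Hecke-valued insertion'' construction you flag as the hard part. If you want to pursue your route, you would need to actually exhibit the contracting homotopy and verify its identities against the type-$\mathcal{B}$ quadratic relations; until then the proposal has a hole exactly at the load-bearing step.
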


This is accomplished by constructing a chain complex $\mathcal{D}^\pm(n)$ which is the algebraic analogue of a combinatorial complex one could use to prove homological stability for the signed permutation groups. Most of this paper is devoted to the proof of the following theorem:

\begin{theoremx}\label{Dpm(n)-acyclic}
$H_d(\mathcal{D}^\pm (n)) = 0$ for $d \leq n-2$.
\end{theoremx}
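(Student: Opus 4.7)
The plan is to mirror the strategy that Hepworth used to prove acyclicity of the type $\mathcal{A}_n$ complex $\mathcal{D}(n)$, modified to accommodate the additional sign data present in type $\mathcal{B}_n$. Based on the paper's combinatorial framing, I expect $\mathcal{D}^\pm(n)$ to be an algebraic analogue of the complex of signed injective words: the degree-$k$ piece should have a basis indexed by sequences of $k+1$ distinct letters from $\{1,\dots,n\}$ each decorated with a sign in $\{\pm\}$, and boundary maps built from Hecke-algebra operators that delete one letter in the spirit of Hepworth's deformed face maps.

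First I would introduce a filtration on $\mathcal{D}^\pm(n)$ whose associated graded is tractable, giving a spectral sequence that converts the problem into one about the type $\mathcal{A}$ complex $\mathcal{D}(n)$. A natural candidate is filtration by the number (or by the positions) of negatively signed letters: deleting a negative letter moves one down in the filtration, while deleting a positive letter preserves it. The $E^0$-page should then split into a direct sum of copies of $\mathcal{D}(n)$, or of related type-$\mathcal{A}$ complexes, suitably shifted. Since $\mathcal{D}(n)$ is already known to be acyclic in the relevant range (this is the input underpinning Theorem~\ref{hom-stab-an}), this should force $E^1$ to vanish for $d \leq n-2$ and hence the whole complex to do so as well.

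An alternative, and perhaps cleaner, route is induction on $n$. One splits $\mathcal{D}^\pm(n)$ via a short exact sequence of complexes: the subcomplex of words not using the letter $n$ is essentially $\mathcal{D}^\pm(n-1)$, while the quotient of words that do use $\pm n$ can be analysed by conditioning on the position and sign of $n$. The quotient should reduce, after an appropriate shift, to (a sum of) copies of $\mathcal{D}^\pm(n-1)$, so the long exact sequence in homology combined with the inductive hypothesis yields the desired vanishing, with the range $d \leq n-2$ coming from the usual degree shift in the connecting homomorphism.

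The main obstacle I expect is compatibility with the $q$-deformation. In the Coxeter group picture, $\mathcal{B}_n = C_2 \wr \mathfrak{S}_n$ enjoys a clean wreath-product structure that would make such a filtration or decomposition essentially combinatorial; but the Hecke algebra $\mathcal{HB}_n$ is a non-trivial deformation that does not split as a wreath or tensor product of smaller Hecke algebras. Verifying that the candidate filtration (or the inductive splitting and any chain homotopies built along the way) is preserved by the deformed boundary maps will require explicit calculation with the Hecke generators, and in particular careful use of the quadratic relation on the sign-flipping generator usually denoted $T_0$. This interaction between the sign data and the extra generator is where I expect essentially all of the technical work to live.
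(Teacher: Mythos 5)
Your first route has the same high-level shape as the paper's argument: filter $\mathcal{D}^\pm(n)$ by the pattern of signs, show the associated graded pieces reduce (after inducing up and suspending) to copies of Hepworth's type-$\mathcal{A}$ complex $\mathcal{D}(n-p)$, and use the known acyclicity of $\mathcal{D}(n-p)$ together with freeness of $\mathcal{H}_n$ over the parabolic subalgebras. So the strategy is right. Your alternative route — induction on $n$ via a subcomplex of words avoiding $n$ — is not what the paper does and is left undeveloped, so I won't assess it further.

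However, there is a genuine gap in the first route: you haven't identified the filtration that actually works, and the paper makes clear this choice is delicate. You propose filtering by the \emph{number} of negative letters (your description ``deleting a negative letter moves one down in the filtration, deleting a positive preserves it'' characterizes exactly this), possibly hedged towards ``positions.'' The paper instead filters by whether all negatives lie in the first $p$ positions (Definition of $F_p$ in Section 5), and deliberately counts from the left; a Remark in Section 3 emphasizes that the seemingly more natural variant counting from the right \emph{fails} to give a subcomplex in the Hecke setting. For your number-of-negatives filtration, once one leaves $q=1$ the face maps $x \mapsto xT_{n-r+j,n-r}$ act by right multiplication in $\mathcal{HB}_n$, and re-expanding the result in the distinguished basis of $V(\mathbf{m}')$'s (via Proposition~\ref{Dn-decomp}) can change the length $t$ of $\mathbf{m}'$ in both directions, so it is not clear this even defines a filtration, let alone one with identifiable graded pieces. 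The paper's filtration by $m_1$ (position of the leftmost negative) is precisely calibrated so that Proposition~\ref{fp-is-filtration} holds, and this required the Coxeter-combinatorial machinery of Sections 4--5: the Mackey decomposition, the distinguished double coset representatives $v(\mathbf{m})$, and explicit commutation lemmas such as Lemma~\ref{v-facemap-calc}. You correctly predict that this is where the hard work lives, but the piece you gloss over — picking and then verifying the filtration — is exactly the content of Theorem~\ref{Dpm(n)-acyclic}. A further nontrivial step you would still need is the isomorphism $\mathcal{M}^t \cong \mathcal{H}_n \otimes_{\mathcal{H}^t_{n-p}} \mathcal{D}^t(n-p)$ (Proposition~\ref{mt-iso}), which requires constructing the twisting elements $\xi(r)$ to reconcile the face maps; this is not automatic from the degree-wise identification of the modules.
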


As in the standard proof of homological stability for $\mathfrak{S}_n$ (\cite[3.1, pg 77]{maazen1979homology}), this highly acyclic complex is then used in a spectral sequence argument to obtain Theorem \ref{main-thm}.

\subsection{Recent related work}
This notion of homological stability for algebras was first introduced by Hepworth in \cite{Hepworth2020}. Here the result is obtained for the family $\mathcal{H}_n$, the simplest family of finite rank Iwahori-Hecke algebras---these are associated to the symmetric group $\mathfrak{S_n}$. 

In addition, a number of other results for algebra homology following the above definition have recently appeared in the literature. Boyd and Hepworth (\cite{boyd2020homology}) showed homological stability for the \emph{Temperley-Lieb} algebras $\text{TL}_n(a)$ and computed the stable homology to show that $H_d(\text{TL}_n(a)) = 0$ in a certain range and under certain conditions on the parameter $a \in R$ (which Randal-Williams later removed \cite{randalremark}). Sroka (\cite{sroka2022homology}) further investigated the homology of these algebras, using a generalisation of the Davis complex from the geometric study of Coxeter groups and showed that when $n$ is odd the homology of~$\text{TL}_n(a)$ vanishes in all degrees. Boyd, Hepworth and Patzt (\cite{boyd2021homology}) compared the homology of the \emph{Brauer} algebras $\text{Br}_n(\delta)$ to that of the symmetric group, and showed that they are isomorphic under some conditions. The algebras $\text{TL}_n(a)$ and $\text{Br}_n(\delta)$ are both examples of diagram algebras and are related to the group algebra of the braid group and the Iwahori-Hecke algebra $\mathcal{H}_n$.



$\mathcal{HB}_n$ by contrast is a quotient of the algebra of the Artin group associated to $\mathcal{B}_n$, which includes $R\mathfrak{S}_{n-1}$ as a subalgebra. The presence of the symmetric group in all these examples allows similar techniques to be used in each case---in particular, the face maps that appear in our complex $\mathcal{D}^\pm (n)$ are closely related to elements of the symmetric group.

\subsection{Outline}
In Section 2, we begin with an exposition of background information on Coxeter groups and Iwahori-Hecke algebras. In Section 3 we present a certain proof of homological stability for the signed permutation group $\mathcal{B}_n$ using the complex of \emph{signed injective words}, which motivates the proof of homological stability for $\mathcal{HB}_n$. Following this, we construct the analogous chain complex $\mathcal{D}^\pm(n)$ of $\mathcal{HB}_n$-modules. In Section 4, we study the double coset structure of the groups $\mathcal{B}_n$ and obtain some technical results regarding distinguished coset representatives. In Section 5, we define the filtration of $\mathcal{D}^\pm(n)$ in terms of these coset representatives. In Section 6, we identify the filtration quotients and show that they are highly connected, thus proving Theorem \ref{Dpm(n)-acyclic}. We then conclude in Section 7 via a routine spectral sequence argument (restated for algebra homology in \cite{Hepworth2020}).

\subsection{Acknowledgements}
I would like to thank my supervisor Rachael Boyd for suggesting the problem and guiding my work, along with providing patient support and constant motivation. This work was done as part of the CMS Summer Research Program at the University of Cambridge, and I am indebted to them for creating this opportunity. I would also like to thank the anonymous referee for their highly detailed and insightful comments.

\section{Background on Iwahori-Hecke algebras}\label{background-section}

This section summarises the essential theory of Coxeter groups and Iwahori-Hecke algebras, and sets out the conventions used in this paper. The results on Coxeter groups are standard (e.g.~ \cite{davis2012geometry}). The material on Iwahori-Hecke algebras follows the presentation in \cite{geck}, and is again standard. Much of the following is is similar to the background in \cite{Hepworth2020}.

\subsection{Coxeter groups}
A Coxeter group is a group $\mathcal{W}$ with presentation:
\[ \mathcal{W} = \Big\langle \ S \ \big | \ s^2=1, 
\ \langle s, t \rangle ^{m_{s,t}} = \langle t, s \rangle ^{m_{s,t}} \ \forall s\neq t \in S\Big\rangle\]
where $S$ is a finite set, $m_{s,t}=m_{t,s}\in \mathbb{N}_{\geq 2} \cup \{ \infty \}$, and $\langle s, t \rangle^m$ denotes the alternating product $stst\ldots$ of length $m$ starting with $s$ (no relation is applied for $m_{s,t}=\infty$). The relations $\langle s, t \rangle ^{m_{s,t}} = \langle t, s \rangle ^{m_{s,t}}$ are known as \textit{braid relations}, and the pair $ (\mathcal{W},S)$ the \textit{Coxeter system}.

Unpacking the braid relations, we see that $s, t \in S$ commute if $m_{s,t}=2$. In other cases:
\[ sts\ldots \quad = \quad tst\ldots \]
where both sides have length $m_{s,t}$. By the relation $s^2=t^2=1$, this is equivalent to:
\[ (st)^{m_{s,t}} = 1 \]

\begin{example}
For $S=\{s_1,s_2,\ldots, s_{n-1} \}$, set:
\begin{equation*}
    m_{s_i, s_j} = 
    \begin{cases}
      3 & \text{if} \ |i-j| = 1 \\
      2 & \text{if} \ |i-j| > 1
    \end{cases}
\end{equation*}
Then for $\mathcal{W}$ the associated Coxeter group, the relations are $s_i^2=1$, $s_is_{i+1}s_i = s_{i+1}s_is_{i+1}$ and $s_i$ commutes with $s_j$ for $|i-j| > 1$---this is the presentation for the symmetric group in $n$ letters $\mathfrak{S}_n$ generated by the $n-1$ transpositions of adjacent elements. This is also known as the Coxeter group of type $A_{n-1}$ (there are $n-1$ generators in $S$). We will refer to the generating set $S$ as $A_{n-1}$, and the group $\mathcal{W}$ as $\mathcal{A}_{n-1}$ - in general we use Roman letters to denote generating sets and calligraphic letters to denote the corresponding group.
\end{example}

In order to depict pictorially the relations in a Coxeter group, a Coxeter diagram is often used. This is a graph with vertex set $S$, and labelled edges assigned by the following rules:
\begin{itemize}
    \item If $m_{s,t} = 2$, so that $s, t$ commute in $\mathcal{W}$, no edge is drawn between them.
    \item If $m_{s,t} = 3$, an unlabelled edge is drawn.
    \item If $m_{s,t} > 3$, an edge is drawn with label $m_{s, t}$.
\end{itemize}

\begin{example}
Referring back to the previous example, the Coxeter group of type $A_{n-1}$, which is isomorphic to $\mathfrak{S}_n$, has Coxeter diagram:

\centerline{	\xymatrix@R=3mm {	
		& \textrm{} & & & \textrm{}\\	
		\hspace{10mm} &
		\begin{tikzpicture}[scale=0.15, baseline=0]
		\draw[fill= black] (5,0) circle (0.5);
		\draw[line width=1] (5,0) -- (15,0);
		\draw[fill= black] (15,0) circle (0.5);
		\draw[line width=1] (15,0) -- (25,0);
		\draw[fill= black] (25,0) circle (0.5);
		\draw (31,0) node {$\ldots \ldots$};
		\draw[fill= black] (37,0) circle (0.5);
		\draw[line width=1] (37,0) -- (47,0);
		\draw[fill= black] (47,0) circle (0.5);
		\draw (5,-1.5) node {$s_1$};
		\draw (15,-1.5) node {$s_2$};
		\draw (25,-1.5) node {$s_3$};
		\draw (37,-1.5) node {$s_{n-2}$};
		\draw (47,-1.5) node {$s_{n-1}$};
		\end{tikzpicture} \\
		}}
\vspace{10mm}
In particular, no edges are labelled, since the only values of $m_{s,t}$ that occur in the presentation of $\mathcal{A}_n$ are $2$ and $3$. By a slight abuse of notation, we will also refer to this diagram as $A_{n-1}$.

\end{example}

If the graph in the Coxeter diagram is not connected, then $\mathcal{W}$ is given by the direct product of the Coxeter groups associated to the connected components---in this case $\mathcal{W}$ is called reducible. Irreducible Coxeter groups, corresponding to connected Coxeter diagrams, may have finite or infinite order. A foundational result in the study of Coxeter groups is the classification of finite irreducible Coxeter groups.

\begin{theorem}[Classification of finite Coxeter groups, \cite{coxeter1935complete}]
Every finite irreducible Coxeter group is given by one of the following presentations:

\resizebox{100mm}{!}{

\centerline{	\xymatrix@R=3mm {	
		& \textrm{} & & & \textrm{}\\	
		A_n & \begin{tikzpicture}[scale=0.15, baseline=0]
		\draw[fill= black] (5,0) circle (0.5);
		\draw[line width=1] (5,0) -- (15,0);
		\draw[fill= black] (10,0) circle (0.5);
		\draw[fill= black] (15,0) circle (0.5);		
		\draw (17.5,0) node {$\ldots$};
		\draw[fill= black] (20,0) circle (0.5);
		\draw[line width=1] (20,0) -- (25,0);				
		\draw[fill= black] (25,0) circle (0.5);
		\end{tikzpicture}
		& & F_4 & \begin{tikzpicture}[scale=0.15, baseline=0]
		\draw[fill= black] (5,0) circle (0.5);
		\draw[line width=1] (5,0) -- (20,0);
		\draw[fill= black] (10,0) circle (0.5);
		\draw[fill= black] (15,0) circle (0.5);
		\draw[fill= black] (20,0) circle (0.5);
		\draw (12.5,2) node {4};
		\end{tikzpicture}  \\
		B_n & \begin{tikzpicture}[scale=0.15, baseline=0]
		\draw[fill= black] (5,0) circle (0.5);
		\draw[line width=1] (5,0) -- (15,0);
		\draw[fill= black] (10,0) circle (0.5);
		\draw[fill= black] (15,0) circle (0.5);		
		\draw (17.5,0) node {$\ldots$};
		\draw[fill= black] (20,0) circle (0.5);
		\draw[line width=1] (20,0) -- (25,0);				
		\draw[fill= black] (25,0) circle (0.5);
		\draw (7.5,2) node {4};
		\end{tikzpicture}
		& & H_3 & \begin{tikzpicture}[scale=0.15, baseline=0]
		\draw[fill= black] (5,0) circle (0.5);
		\draw[line width=1] (5,0) -- (15,0);
		\draw[fill= black] (10,0) circle (0.5);
		\draw[fill= black] (15,0) circle (0.5);
		\draw (7.5,2) node {5};
		\end{tikzpicture}  \\
		D_n & \begin{tikzpicture}[scale=0.15, baseline=0]
		\draw[fill= black] (5,-3) circle (0.5);
		\draw[line width=1] (5,-3) -- (10,0);
		\draw[line width=1] (5,3) -- (10,0);
		\draw[line width=1] (15,0) -- (10,0);
		\draw[fill= black] (5,3) circle (0.5);
		\draw[fill= black] (10,0) circle (0.5);
		\draw[fill= black] (15,0) circle (0.5);
		\draw (17.5,0) node {$\ldots$};				
		\draw[fill= black] (20,0) circle (0.5);
		\end{tikzpicture}
		& & H_4 & \begin{tikzpicture}[scale=0.15, baseline=0]
		\draw[fill= black] (5,0) circle (0.5);
		\draw[line width=1] (5,0) -- (20,0);
		\draw[fill= black] (10,0) circle (0.5);
		\draw[fill= black] (15,0) circle (0.5);
		\draw[fill= black] (20,0) circle (0.5);
		\draw (7.5,2) node {5};
		\end{tikzpicture}  \\
		I_2(p) & \begin{tikzpicture}[scale=0.15, baseline=0]
		\draw[fill= black] (5,0) circle (0.5);
		\draw (7.5,2) node {p};
		\draw[line width=1] (5,0) -- (10,0);			
		\draw[fill= black] (10,0) circle (0.5);
		\end{tikzpicture}
		& &E_6 & \begin{tikzpicture}[scale=0.15, baseline=0]
		\draw[fill= black] (5,0) circle (0.5);
		\draw[line width=1] (5,0) -- (25,0);
		\draw[fill= black] (10,0) circle (0.5);
		\draw[fill= black] (15,0) circle (0.5);
		\draw[fill= black] (20,0) circle (0.5);		
		\draw[fill= black] (25,0) circle (0.5);			
		\draw[fill= black] (15,-5) circle (0.5);
		\draw[line width=1] (15,0) -- (15,-5);
		\end{tikzpicture}  \\
		& & & E_7 & \begin{tikzpicture}[scale=0.15, baseline=0]
		\draw[fill= black] (5,0) circle (0.5);
		\draw[line width=1] (5,0) -- (30,0);
		\draw[fill= black] (10,0) circle (0.5);
		\draw[fill= black] (15,0) circle (0.5);
		\draw[fill= black] (20,0) circle (0.5);		
		\draw[fill= black] (25,0) circle (0.5);		
		\draw[fill= black] (30,0) circle (0.5);		
		\draw[fill= black] (15,-5) circle (0.5);
		\draw[line width=1] (15,0) -- (15,-5);
		\end{tikzpicture}   \\
		& & & E_8 & \begin{tikzpicture}[scale=0.15, baseline=0]
		\draw[fill= black] (5,0) circle (0.5);
		\draw[line width=1] (5,0) -- (35,0);
		\draw[fill= black] (10,0) circle (0.5);
		\draw[fill= black] (15,0) circle (0.5);
		\draw[fill= black] (20,0) circle (0.5);		
		\draw[fill= black] (25,0) circle (0.5);		
		\draw[fill= black] (30,0) circle (0.5);		
		\draw[fill= black] (35,0) circle (0.5);		
		\draw[fill= black] (15,-5) circle (0.5);
		\draw[line width=1] (15,0) -- (15,-5);
		\end{tikzpicture} \\
}} }
\vspace{5mm}

\noindent where $A_n, B_n, D_n$ and $I_n$ are infinite families, and the remaining entries are the six exceptional groups.
\end{theorem}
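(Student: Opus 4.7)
The plan is to pass to the geometric (Tits) representation of $W$. For a Coxeter system $(W,S)$ with $|S|=n$, let $V$ be the real vector space on basis $\{e_s : s \in S\}$, equipped with the symmetric bilinear form
\[ B(e_s, e_t) = -\cos\!\left(\tfrac{\pi}{m_{s,t}}\right) \]
(with the convention $m_{s,s}=1$, so $B(e_s,e_s)=1$), and define $\sigma_s \in \mathrm{GL}(V)$ by $\sigma_s(v) = v - 2B(e_s,v)e_s$. A direct calculation in each plane $\mathbb{R}e_s + \mathbb{R}e_t$ shows that $\sigma_s, \sigma_t$ generate a dihedral group of order $2m_{s,t}$, so $s \mapsto \sigma_s$ extends to a faithful representation $W \hookrightarrow \mathrm{GL}(V)$. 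The crucial step is to prove that $W$ is finite if and only if $B$ is positive definite: if $B$ is positive definite, $W$ sits as a discrete subgroup of the compact group $\mathrm{O}(V,B)$ and is therefore finite; conversely, if $W$ is finite, averaging produces a $W$-invariant inner product which must agree with $B$ on each dihedral plane up to scale, forcing positive definiteness. This reduces the theorem to a combinatorial problem: classify the connected Coxeter diagrams whose Gram matrix is positive definite.

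The engine for the combinatorial classification is that positive definiteness is inherited by principal submatrices, so every subdiagram of a positive-definite diagram is itself positive definite. Dually, to exclude a candidate diagram $\Gamma$ it suffices to exhibit one subdiagram with Gram determinant $\leq 0$. The natural obstructions are the \emph{affine} Coxeter diagrams $\widetilde{\mathcal{A}}_n, \widetilde{\mathcal{B}}_n, \widetilde{\mathcal{D}}_n, \widetilde{\mathcal{E}}_{6,7,8}, \widetilde{\mathcal{F}}_4, \widetilde{\mathcal{G}}_2$, each of whose Gram determinants vanish, together with a short list of small hyperbolic diagrams of negative determinant. All of these determinants can be computed once and for all by a leaf-expansion recursion on the rank.

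The case analysis then splits on the largest label $m = \max m_{s,t}$ appearing in $\Gamma$. Labels $m \geq 7$ (including $m=\infty$) are ruled out by rank $2$ or $3$ subdiagram obstructions, so $m \leq 6$. The regime $m=6$ leaves only $\mathcal{I}_2(6)$, since any rank $\geq 3$ extension contains $\widetilde{\mathcal{G}}_2$; the regime $m=5$ leaves precisely $\mathcal{I}_2(5), \mathcal{H}_3, \mathcal{H}_4$; the regime $m=4$ forces the $4$-edge to be unique and terminal, yielding $\mathcal{B}_n$ or $\mathcal{F}_4$; and the regime $m \leq 3$ produces a simply-laced tree whose shape is pinned down, via exclusion of $\widetilde{\mathcal{D}}_n$ and $\widetilde{\mathcal{E}}_{6,7,8}$, to a path or a $Y$-graph realising $\mathcal{A}_n$, $\mathcal{D}_n$, or $\mathcal{E}_{6,7,8}$. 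Each surviving candidate is then confirmed positive definite by computing its Gram determinant explicitly (e.g.\ $\det B_{\mathcal{E}_8} = 1/2^8$ against $\det B_{\widetilde{\mathcal{E}}_8} = 0$).

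The main obstacle is executing this case analysis cleanly: the combinatorial exclusions must be matched with the right affine subdiagrams and paired with explicit realisations of every surviving family, and the quantitative core is a family of recursive determinant computations that must simultaneously validate each listed diagram and kill every diagram just outside the list. Conceptually the proof is elementary, but the bookkeeping is the substance of the argument, which is why the result is usually developed over several sections in a textbook treatment (as in \cite{davis2012geometry}).
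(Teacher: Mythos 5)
The paper does not prove this theorem: it is quoted as a classical background result with a citation to Coxeter (1935) and to standard references such as \cite{davis2012geometry}, and plays no further logical role in the paper beyond fixing the family $\mathcal{B}_n$. So there is no in-paper proof to compare against. Your sketch follows the standard textbook route (Tits representation, finiteness $\iff$ positive definiteness of the cosine form $B$, then classify positive-definite Coxeter diagrams by excluding the affine subdiagrams), and in outline this is correct.

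One step as stated does not quite work. You argue that if $W$ is finite then an averaged $W$-invariant inner product ``must agree with $B$ on each dihedral plane up to scale, forcing positive definiteness.'' But positive definiteness of $B$ on each $2$-dimensional subspace $\mathbb{R}e_s + \mathbb{R}e_t$ does not imply positive definiteness of $B$ on $V$; planewise definiteness is strictly weaker than global definiteness (and for $m_{s,t}=2$ the dihedral plane is reducible, so Schur's lemma does not pin down the form there anyway). The standard repair is global, not planewise: for a connected diagram one shows that $V$ is an irreducible $W$-module whenever $B$ is non-degenerate, so Schur forces $B$ to be a scalar multiple of the averaged positive-definite form, and the sign is fixed by $B(e_s,e_s)=1$; a separate short argument rules out degenerate $B$ when $W$ is finite. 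Similarly, the forward implication that $W$ is discrete in $\mathrm{O}(V,B)$ needs the chamber geometry of the Tits cone rather than being automatic. Neither issue is fatal to the plan, but they are genuine gaps in the sketch as written; the rest of the combinatorial case analysis by largest edge label and affine obstructions is the right scaffolding.
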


In this paper, we are largely concerned with the groups $\mathcal{B}_n$ coming from the infinite family of diagrams $B_n$. We will label the first generator $u$, and the remaining generators $s_1,s_2,\ldots,s_{n-2}$. The diagram $A_{n-1}$ embeds in the tail of $B_n$, so that $s_1,\ldots,s_{n-2}$ generate a subgroup isomorphic to $\mathcal{A}_{n-1} \cong \mathfrak{S}_n$.

\centerline{	\xymatrix@R=3mm {	
		& \textrm{} & & & \textrm{}\\	
		\hspace{10mm} &
		\begin{tikzpicture}[scale=0.15, baseline=0]
		\draw[fill= black] (5,0) circle (0.5);
		\draw[line width=1] (5,0) -- (15,0);
		\draw[fill= black] (15,0) circle (0.5);
		\draw[line width=1] (15,0) -- (25,0);
		\draw[fill= black] (25,0) circle (0.5);
		\draw (31,0) node {$\ldots \ldots$};
		\draw[fill= black] (37,0) circle (0.5);
		\draw[line width=1] (37,0) -- (47,0);
		\draw[fill= black] (47,0) circle (0.5);
		\draw (5,-1.5) node {$u$};
		\draw (15,-1.5) node {$s_1$};
		\draw (25,-1.5) node {$s_2$};
		\draw (37,-1.5) node {$s_{n-3}$};
		\draw (47,-1.5) node {$s_{n-2}$};
        \draw (10,2) node {4};
		\end{tikzpicture} \\
		}}
\vspace{10mm}

We claimed in the introduction that $\mathcal{B}_n$ may be identified with the group of signed permutations on $n$ letters. Let us now describe an explicit map. The generator $u$ is sent to the matrix which sends $e_1$ to $-e_1$ and fixes $e_i$ for $i>1$, for $e_i$ a basis. The generators $s_i$ are sent to the standard permutation matrices that swap $e_i$ and $e_{i+1}$ and fix the other basis vectors. The relations among the generators in $\mathcal{B}_n$ are as follows:
\begin{itemize}
    \item $u^2=s_1^2=1$
    \item $us_1us_1=s_1us_1u$
    \item $us_i=s_iu$ for $i>1$
    \item $s_is_{i+1}s_i=s_{i+1}s_is_{i+1}$
    \item $s_is_j=s_js_i$ for $|i-j|>1$
\end{itemize}
It is easy to see that the matrices described obey these relations, and that the image of this map is the group of matrices with exactly one non-zero entry in each row and column and with all non-zero entries $\in \{ \pm 1 \}$. The map is in fact an isomorphism, as required, but this requires a slightly more detailed analysis. 

The usual cycle notation in $\mathfrak{S}_n$ may be extended to the group of signed permutations: for distinct $a_i$ and choice of signs $(-1)^{r_i}$, the cycle 

\[ \biggl((-1)^{r_1}a_1 \quad (-1)^{r_2}a_2 \quad \ldots \quad (-1)^{r_k}a_k \biggr) \]
represents the signed permutation which sends $+a_i$ (resp.~ $-a_i$) to $(-1)^{r_i + r_{i+1}}a_{i+1}$ (resp.~ $(-1)^{r_i + r_{i+1} + 1}a_{i+1}$). For example, the cycle $(2 \ -3)$ sends $2$ to $-3$, $-2$ to $3$, $-3$ to $2$, and $3$ to $-2$.

In this notation, the isomorphism with the group of signed permutations is given by associating $u$ with the cycle $(-1 \ 1)$, and $s_{i}$ with $(i \quad i+1)$. Also $\mathcal{B}_m$ embeds in $\mathcal{B}_n$ for $m\leq n$ by considering the signed permutations on $m$ letters inside the signed permutations on $n$ letters---in terms of generators, $\mathcal{B}_m$ is the subgroup generated by $u, s_1,\ldots,s_{m-1}$.

There are several important results in the theory of Coxeter groups that are used in this paper. Most significant are two theorems relating to the word problem in Coxeter groups. If $w \in \mathcal{W}$, then the length $l(w)$ is defined to be the smallest length of a word representing $w$ in the generators $S$. A word is reduced if it is a minimal representative, i.e.~ if $l(x_1x_2\ldots x_r) = r$.

\begin{theorem}[Matsumoto's theorem, \cite{matsumoto1964generateurs}]\label{matso}
For a Coxeter system $ (\mathcal{W},S)$, any two reduced words in $S$ for the same element $w$ are related by a series of transforms $\langle s, t \rangle ^{m_{s,t}} = \langle t, s \rangle ^{m_{s,t}}$.
\end{theorem}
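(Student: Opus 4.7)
The plan is to proceed by induction on the common length $r = \ell(w)$. The cases $r \le 1$ are immediate, since $w$ then has at most one reduced expression. For the inductive step, let $\mathbf{s} = s_1\cdots s_r$ and $\mathbf{t} = t_1\cdots t_r$ be two reduced expressions for $w$. If $s_1 = t_1$, then $s_2\cdots s_r$ and $t_2\cdots t_r$ are both reduced expressions for $s_1 w$, which has length $r-1$; the inductive hypothesis produces a chain of braid transforms between them, and prepending the common letter $s_1$ gives a chain from $\mathbf{s}$ to $\mathbf{t}$.

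The substantive case is $s_1 \ne t_1$. Write $s = s_1$, $t = t_1$, and $m = m_{s,t}$. The key lemma I would establish is: whenever $w$ admits reduced expressions beginning with both $s$ and $t$, it admits a reduced expression whose initial segment of length $m$ is the alternating word $\langle s, t\rangle^m = sts\cdots$. Granting this, call such an expression $\mathbf{u}$, and apply a single braid transform $\langle s, t\rangle^m = \langle t, s\rangle^m$ to the leading alternating prefix of $\mathbf{u}$ to obtain a reduced expression $\mathbf{u}'$ beginning with $t$. Now $\mathbf{s}$ and $\mathbf{u}$ both begin with $s$, while $\mathbf{t}$ and $\mathbf{u}'$ both begin with $t$, so the same-first-letter case provides chains $\mathbf{s} \sim \mathbf{u}$ and $\mathbf{u}' \sim \mathbf{t}$; concatenating with the single transform $\mathbf{u} \sim \mathbf{u}'$ completes the argument.

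The main obstacle is the key lemma. My approach is to use the \emph{Exchange Condition}, a combinatorial consequence of the Coxeter relations: if $x_1\cdots x_k$ is a reduced expression and $\ell(r x_1 \cdots x_k) < k$ for some $r \in S$, then $r x_1\cdots x_k = x_1\cdots \widehat{x_i}\cdots x_k$ for some index $i$. Beginning with $w$, I would left-multiply alternately by $s, t, s, t, \ldots$; the initial hypothesis that both $s$ and $t$ reduce length, together with iterated application of the Exchange Condition, lets one propagate the length drop forward and prepend the alternating word letter by letter onto a reduced expression for the remaining element. The alternation persists for exactly $m$ steps, as the whole process takes place inside the dihedral parabolic subgroup $\langle s, t\rangle$, whose longest element has length $m$ (note that the hypothesis forces $m < \infty$, since when $m_{s,t} = \infty$ no element of $W$ can have both $s$ and $t$ as left descents). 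The careful length bookkeeping and verification that the alternating process terminates at precisely $m$ steps is the technical heart of the proof.
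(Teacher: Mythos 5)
The paper cites Matsumoto's theorem from \cite{matsumoto1964generateurs} and does not prove it, so there is no proof of record here to compare against; I am assessing your proposal on its own terms. Your outline---induction on $\ell(w)$, disposing of the $s_1=t_1$ case by deletion, and in the $s_1\neq t_1$ case pivoting through an expression beginning with the alternating word $\langle s,t\rangle^{m_{s,t}}$---is the standard and correct route, and you correctly identify the key lemma as the place where all the work lives.

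The one place your sketch genuinely leaves a hole is the exchange-condition iteration for the key lemma. It does go through, but the decisive observation is unstated. The right inductive claim is: if $\langle s,t\rangle^j$ and $\langle t,s\rangle^j$ are both reduced left prefixes of $w$ for some $j<m_{s,t}$, then so are $\langle s,t\rangle^{j+1}$ and $\langle t,s\rangle^{j+1}$. For the step, write $w=\langle t,s\rangle^j\cdot v$ reduced and apply the exchange condition with $s$; if the deleted letter were in one of the first $j$ positions, then cancelling $v$ would force $s\langle t,s\rangle^{j}=\langle s,t\rangle^{j+1}$ to equal a word of length $j-1$ in $\{s,t\}$, impossible since $\langle s,t\rangle^{j+1}$ has length $j+1$ in the dihedral parabolic when $j+1\leq m_{s,t}$. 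So the deletion lands in $v$, hence $\langle t,s\rangle^j$ remains a prefix of $sw$ and therefore $\langle s,t\rangle^{j+1}$ is a prefix of $w$; the symmetric argument gives $\langle t,s\rangle^{j+1}$. Iterating to $j=m_{s,t}$ proves the lemma, and boundedness of $\ell(w)$ forces $m_{s,t}<\infty$ as you note. A slicker route to the same lemma, using machinery the paper already sets up in Section~2, is via parabolic decomposition: writing $w=w_J x$ with $J=\{s,t\}$ and $x\in X_J$, a generator in $J$ is a left descent of $w$ exactly when it is a left descent of $w_J$, and the unique element of a dihedral Coxeter group with both generators as left descents is its longest element $\langle s,t\rangle^{m_{s,t}}$.
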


From this, one can find a complete solution to the word problem.

\begin{theorem}[The word problem, \cite{tits1969probleme}]\label{word-problem}
For a Coxeter system $ (\mathcal{W},S)$, any two words represent the same element if and only if they are related by a series of transforms of the types
\begin{itemize}
    \item $s^2 \Rightarrow 1$
    \item $\langle s, t \rangle ^{m_{s,t}} \Rightarrow \langle t, s \rangle ^{m_{s,t}}$
\end{itemize}
These are known as \emph{M-moves}.
\end{theorem}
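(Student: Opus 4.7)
The plan is to prove the non-trivial direction; the converse is immediate since both families of M-moves correspond to defining relations in the presentation of $W$. Given two words $w_1, w_2$ in $S$ representing the same element $g \in W$, the strategy is to M-reduce each to some reduced expression for $g$, and then invoke Matsumoto's theorem (Theorem \ref{matso}) to conclude that any two such reduced expressions are themselves M-equivalent (in fact by braid moves alone). This would give the chain $w_1 \sim r_1 \sim r_2 \sim w_2$ of M-equivalences.

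The heart of the argument is the following reduction lemma: \emph{any word $w = s_1 \cdots s_r$ can be transformed, by a sequence of M-moves, into a reduced expression for the element it represents.} I would prove this by induction on $r$. If $w$ is already reduced, there is nothing to do. Otherwise let $i$ be the smallest index such that $s_1 \cdots s_i$ is not reduced, so that $u := s_1 \cdots s_{i-1}$ is reduced while $l(us_i) < l(u)$. The Exchange Condition --- a standard consequence of the Coxeter axioms --- then produces an index $j \leq i-1$ with
\[ s_1 \cdots s_{i-1} s_i \;=\; s_1 \cdots \widehat{s_j} \cdots s_{i-1} \]
in $W$. Cancelling the common prefix $s_1 \cdots s_{j-1}$ and then multiplying both sides on the right by $s_i$ yields the identity
\[ s_j s_{j+1} \cdots s_{i-1} \;=\; s_{j+1} \cdots s_{i-1} s_i, \]
and both sides are reduced expressions of length $i-j$ (the left-hand side as a subword of the reduced word $s_1 \cdots s_{i-1}$, and the right-hand side because it has the same length and represents the same element).

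Matsumoto's theorem then asserts that these two reduced words differ by a sequence of braid moves. Performing the corresponding substitution inside $w$ rewrites the subword $s_j \cdots s_{i-1}$ as $s_{j+1} \cdots s_{i-1} s_i$, producing the word
\[ s_1 \cdots s_{j-1} \,s_{j+1} \cdots s_{i-1} s_i\, s_i s_{i+1} \cdots s_r, \]
in which the two adjacent copies of $s_i$ may be cancelled by the M-move $s_i^2 \Rightarrow 1$. The result has length $r-2$ and represents the same element, so the inductive hypothesis finishes the reduction lemma. Combining this lemma with Matsumoto's theorem, as described at the outset, proves the theorem.

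The main obstacle is importing the Exchange Condition, which the author treats as background; giving a self-contained proof would require setting up either the geometric representation of $W$ or the Tits cone and Coxeter complex, and this is the principal technical content concealed behind the phrase ``standard results on Coxeter groups.''
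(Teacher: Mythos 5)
The paper does not prove this theorem: it is cited as background from Tits (1969), and the text gives no argument, so there is no ``paper proof'' to compare against. Taking your proof on its own terms, the argument is correct. The backbone---M-reduce each word to a reduced expression, then bridge the two reduced expressions by Matsumoto---is the standard deduction of Tits' solution once the Exchange Condition is available, and your reduction lemma is set up correctly: the Exchange Condition produces the relation $s_j \cdots s_{i-1} = s_{j+1} \cdots s_{i-1} s_i$ between two reduced words of equal length; Matsumoto supplies a braid-move path between them; substituting the right-hand form into $w$ creates the adjacent pair $s_i s_i$, which is killed by the $s^2 \Rightarrow 1$ move, dropping the length by two and feeding the induction. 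One small remark on attribution: historically Tits proved this statement directly (and Matsumoto's theorem falls out as the special case of two reduced words), whereas you run the logic the other way, using the paper's already-stated Theorem \ref{matso} as a lemma. That is a perfectly legitimate route in the paper's context, and you are right that the genuine imported machinery is the Exchange Condition, which would itself need a proof via the geometric representation or an equivalent device.
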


\subsection{Parabolic subgroups}
Any subset $J \subseteq S$ generates a subgroup $\mathcal{W}_J$ of $\mathcal{W}$. Such subgroups are known as \emph{parabolic}, and in the study of Coxeter groups it is found that $\mathcal{W}_J$ is a Coxeter group in a natural way: it forms the Coxeter system $(\mathcal{W}_J, J)$ with $m_{s,t}$ as in $(\mathcal{W},S)$. From the diagram perspective, every subset $J$ of the vertices generates a subgroup of $\mathcal{W}$ identified with the Coxeter group with diagram the the full diagram spanned by $J$.

Parabolic subgroups enjoy many special properties in the theory of Coxeter groups. In particular, there are distinguished coset representatives for each parabolic group---define:
\[ X_J = \{ w \in \mathcal{W} \, | \, l(sw) > l(w), \quad  \forall s \in J \} \]

This is the set of elements with no reduced expression beginning with a generator in $J$. These are known as $(J, \emptyset)$-reduced. By Matsumoto's Theorem (\ref{matso}), to check if a reduced word is $(J, \emptyset)$-reduced it suffices to show that no series of braid relations produces a word beginning with a letter in $J$. We may also consider the corresponding notion for words ending in an element of $X_J$: it is clear that the set of elements with no reduced word ending with a letter in $J$ is  $X_J^{-1}$.

The next theorem shows why $X_J$ are known as the \emph{distinguished right coset representatives}.

\begin{theorem}[{\cite[4.3.3]{davis2012geometry}}]
$X_J$ is a complete set of representatives for $\mathcal{W}_J\backslash \mathcal{W}$. Furthermore, for $x \in X_J$, $x$ is the shortest element in $\mathcal{W}_J x$.

Similarly, $X_J^{-1}$ is a complete set of representatives for $\mathcal{W} / \mathcal{W}_J$, and for $x \in X_J^{-1}$, $x$ is the shortest element in $x \mathcal{W}_J$.
\end{theorem}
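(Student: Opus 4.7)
My plan is to reduce the entire theorem to one key \emph{length additivity formula}:
\[ l(w_J \cdot x) \;=\; l(w_J) + l(x) \qquad \text{for all } w_J \in W_J, \ x \in X_J. \]
Granted this formula, existence of an $X_J$-representative in each coset $W_J w$ follows by taking any element $x$ of minimal length in $W_J w$: if $l(sx) < l(x)$ for some $s \in J$, then $sx \in W_J w$ is shorter, contradicting minimality, so in fact $x \in X_J$. Uniqueness follows symmetrically: if $x_1 = w_J x_2$ with $x_1, x_2 \in X_J$, applying the length formula to both $w_J$ and $w_J^{-1}$ yields $l(x_1) = l(w_J) + l(x_2)$ and $l(x_2) = l(w_J) + l(x_1)$, forcing $l(w_J) = 0$. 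The same formula makes the ``shortest element'' claim immediate, since any element of $W_J x$ has the shape $w_J x$ with $l(w_J) \geq 0$ and equality only at $w_J = 1$.

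The real content is proving the length additivity formula, which I plan to do by induction on $l(w_J)$. The base case is trivial. For the inductive step, I would write $w_J = s \cdot w_J'$ reduced with $s \in J$, apply the inductive hypothesis to $w_J'$, and fix reduced expressions $w_J' = s_1 \cdots s_p$ (letters in $J$) and $x = t_1 \cdots t_q$. By the inductive hypothesis, the concatenation $s_1 \cdots s_p t_1 \cdots t_q$ is a reduced expression for $w_J' x$ of length $p+q$, and it then suffices to show $l(s \cdot w_J' x) > l(w_J' x)$. Otherwise, the exchange condition lets me delete one letter from the concatenated expression to obtain $s \cdot w_J' x$. Deleting one of the $s_i$'s would yield an expression of length $p-1$ for $sw_J' = w_J$, contradicting $l(w_J) = p+1$; deleting one of the $t_j$'s would yield an expression for $sx$ of length at most $q-1$, contradicting $x \in X_J$ (which requires $l(sx) > l(x) = q$). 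Either branch gives a contradiction, closing the induction.

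The second half of the theorem, concerning $X_J^{-1}$ and $W/W_J$, I would obtain by inversion: inverting reduced expressions interchanges ``starts with'' and ``ends with'', sends right cosets of $W_J$ to left cosets, and preserves length, so the first half applies verbatim. The single delicate step is the case analysis in the inductive step above, which relies on knowing that a failure of length additivity forces a deletion that lands either entirely within the $W_J$-part or entirely within $x$; everything else is a formality once the length additivity formula is in hand.
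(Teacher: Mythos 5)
The paper does not prove this statement but cites it as \cite[4.3.3]{davis2012geometry}, so your argument stands or falls on its own. Your reduction to the length additivity formula is sound, and induction on $l(w_J)$ plus the exchange condition is the right idea, but the treatment of case (b) has a genuine gap. Deleting a $t_j$ from the reduced expression $s_1\cdots s_p\,t_1\cdots t_q$ of $w_J' x$ produces an expression for $s\,w_J' x$, not for $sx$. Concretely,
\[
  s\,w_J' x \;=\; s_1\cdots s_p\,t_1\cdots \widehat{t_j}\cdots t_q \;=\; w_J'\,(t_1\cdots \widehat{t_j}\cdots t_q),
\]
and cancelling $w_J'$ on the left gives $u\,x = t_1\cdots \widehat{t_j}\cdots t_q$ with $u = (w_J')^{-1} s\, w_J' \in W_J$. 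This shows $l(ux) \leq q-1 < l(x)$, but $u$ is in general a long element of $W_J$, not a letter of $J$, so it does not contradict the definition of $X_J$, which only controls $l(sx)$ for $s \in J$. What it does contradict is minimality of $x$ in $W_J x$ --- but that is one of the conclusions you are trying to derive, so invoking it here would be circular.

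The standard repair reorders the steps. First prove length additivity under the stronger hypothesis that $x$ is of minimal length in $W_J x$; then case (b) is instantly absurd because $ux \in W_J x$ would be strictly shorter than $x$. From this, deduce that every coset $W_J w$ contains a minimal representative lying in $X_J$ (your existence argument, which never used the length formula). Then deduce the converse, that every $x \in X_J$ is minimal in its coset: writing $x = w_J x_0$ with $x_0$ minimal and applying the (now proven) length formula gives $l(x) = l(w_J) + l(x_0)$, and if $w_J \neq 1$ one can peel off $s \in J$ to find $l(sx) = l(x) - 1$, contradicting $x \in X_J$. Once $X_J$ is identified with the set of minimal-length coset representatives, length additivity holds for all $x \in X_J$, and your uniqueness, shortest-element, and inversion arguments all go through unchanged.
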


\begin{example}
    As discussed, the permutation group $\mathfrak{S}_n \cong \mathcal{A}_{n-1}$ is a parabolic subgroup of $\mathcal{B}_n$ via the inclusion of the diagram $A_{n-1}$ into $B_n$. Then $X_{A_{n-1}}$ is the set of distinguished representatives for the right cosets $\mathcal{A}_{n-1}\backslash \mathcal{B}_n \cong \mathfrak{S}_n \backslash \mathcal{B}_n$.
\end{example}

The theory extends to double cosets. Take $X_{JK} = X_J \cap X_K^{-1}$, so $x\in X_{JK}$ are elements with no reduced words beginning with a letter in $J$, or ending with a letter in $K$. Analogous to the coset representatives, $X_{JK}$ is a complete set of representatives for $\mathcal{W}_J \backslash \mathcal{W} / \mathcal{W}_K$, and is made up of the shortest element in each double coset. The elements of $X_{JK}$ are called $(J, K)$-reduced, and are the \emph{distinguished double coset representatives} (\cite[2.1.7]{geck}).

It is often easier to describe the words that are $(J, K)$-reduced for some $K$ than those that are merely $(J, \emptyset)$-reduced. The \emph{Mackey decomposition} relates these:

\begin{prop}[Mackey decomposition, {\cite[2.1.9]{geck}}]\label{mackey}
For $J, K \subseteq S$:
\begin{equation*}
    X_J = \bigsqcup_{d\in X_{JK}} d \cdot X^K_{J^d \cap K}
\end{equation*}
where for $J' \subseteq K' \subseteq S$, $X^{K'}_{J'}$ denotes the distinguished coset representatives for $\mathcal{W}_{J'} \backslash \mathcal{W}_{K'}$, and $g^d = d^{-1}gd$ is conjugation with the positive power on the right.
\end{prop}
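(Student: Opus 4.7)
The plan is to prove three set-theoretic claims: (i) each piece $d \cdot X^K_{J^d \cap K}$ is contained in $X_J$, (ii) the pieces indexed by distinct $d \in X_{JK}$ are disjoint, and (iii) together they exhaust $X_J$. Throughout I use two standard length-additivity properties of distinguished coset reps: first, if $d \in X_K^{-1}$ and $w \in W_K$ then $l(dw) = l(d) + l(w)$, because no M-move on a concatenated reduced expression for $dw$ can bring a letter of $K$ to the end of the $d$-part; second, if $y \in X^K_L$ for $L \subseteq K$ and $h \in W_L$ then $l(hy) = l(h) + l(y)$, by induction on $l(h)$ using the defining property $l(sy) > l(y)$ for $s \in L$ together with the exchange condition. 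I also take for granted the standard fact that, for $d \in X_{JK}$, the intersection $d^{-1} W_J d \cap W_K$ is itself a standard parabolic subgroup of $W_K$; this is what the notation $W_{J^d \cap K}$ refers to.

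The main step is (i). Fix $d \in X_{JK}$, $y \in X^K_{J^d \cap K}$, and $s \in J$. The first length-additivity gives $l(dy) = l(d) + l(y)$, and since multiplication by a simple reflection changes length by exactly $\pm 1$, it suffices to rule out $l(sdy) = l(dy) - 1$. Concatenating reduced expressions $d = d_1 \cdots d_k$ and $y = y_1 \cdots y_m$ gives a reduced expression for $dy$, and the exchange condition (applied to $sdy$) says some letter can be deleted from this expression to produce a reduced expression for $sdy$. If the deleted letter is some $d_i$ then $sd$ has length at most $l(d) - 1$, contradicting $d \in X_J$; hence the deleted letter is some $y_j$, so $sdy = d \cdot (y_1 \cdots \hat{y}_j \cdots y_m)$. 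Multiplying on the right by $y^{-1}$ and telescoping the cancellations using $y_i^2 = 1$ yields the identity $d^{-1} s d = (y_1 \cdots y_{j-1}) y_j (y_1 \cdots y_{j-1})^{-1}$; call this element $r$. Then $r$ is a conjugate of $y_j \in K$ by an element of $W_K$, so $r \in W_K$, and evidently $r \in d^{-1} W_J d$; hence $r \in W_{J^d \cap K}$. Now rewrite $sdy = d \cdot r \cdot y = d \cdot (ry)$: the second length-additivity gives $l(ry) = l(r) + l(y)$, and the first gives $l(d \cdot ry) = l(d) + l(ry)$, so $l(sdy) = l(d) + l(r) + l(y) \geq l(dy) + 1$, contradicting our assumption.

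Claim (ii) is immediate: if $d_1 y_1 = d_2 y_2$ with $d_i \in X_{JK}$ and $y_i \in W_K$, then both sides lie in the common double coset $W_J d_1 W_K = W_J d_2 W_K$, so $d_1 = d_2$ by uniqueness of distinguished double coset reps, forcing $y_1 = y_2$. For (iii), take $w \in X_J$ and let $d \in X_{JK}$ be the double coset rep of $W_J w W_K$, so $w = u d v$ for some $u \in W_J$ and $v \in W_K$. Decompose $v = h y$ with $h \in W_{J^d \cap K}$ and $y \in X^K_{J^d \cap K}$; since $d W_{J^d \cap K} d^{-1} \subseteq W_J$, rewriting gives $w = u'(dy)$ with $u' \in W_J$. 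By (i), $dy \in X_J$, and $dy$ lies in the same left $W_J$-coset as $w$, so uniqueness of distinguished coset reps forces $w = dy$. The main obstacle is step (i), and specifically the exchange-condition calculation identifying $d^{-1} s d$ with an element of $W_{J^d \cap K}$.
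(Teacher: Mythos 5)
The paper states this result as a citation to Geck--Pfeiffer \cite[2.1.9]{geck} and does not reproduce a proof, so there is no in-paper argument to compare against; I will evaluate your proposal on its own terms.

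Your three-step structure (containment, disjointness, exhaustion) is the standard one, and steps (ii) and (iii) are correct as written. In step (i) the exchange-condition computation showing $d^{-1}sd = (y_1\cdots y_{j-1})y_j(y_1\cdots y_{j-1})^{-1} =: r$ is also correct. The one place worth flagging is the inference from ``$r \in W_K$ and $r \in d^{-1}W_Jd$'' to ``$r \in W_{J^d\cap K}$''. You acknowledge that you are invoking, as a black box, the fact that $d^{-1}W_Jd \cap W_K$ coincides with the standard parabolic $W_{J^d\cap K}$ (Kilmoyer's theorem). In the cited Geck--Pfeiffer text that result (2.1.12) appears \emph{after} the Mackey formula (2.1.9), so one should at least verify no circularity; more to the point, your own computation already yields a sharper and self-contained conclusion that sidesteps Kilmoyer entirely. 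From $dr = sd$ and $d \in X_J$ you have $l(dr) = l(sd) = l(d)+1$; since $r \in W_K$ and $d \in X_K^{-1}$, your first length-additivity gives $l(dr) = l(d)+l(r)$, forcing $l(r)=1$. Thus $r$ is a simple reflection, and $r \in W_K$ then forces $r \in K$, while $r = d^{-1}sd$ with $s \in J$ gives $r \in J^d\cap K$. From there the definition of $X^K_{J^d\cap K}$ gives $l(ry) = l(y)+1$ directly, and the rest of your length bookkeeping finishes the contradiction as you wrote it. With that substitution the argument is complete, modulo only the two length-additivity facts you state at the outset.
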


Inverting this produces the Mackey decomposition for right cosets:

\begin{equation*}
    X_J^{-1} = \bigsqcup_{d\in X_{KJ}} (X^K_{K \cap {}^d J })^{-1} \cdot d
\end{equation*}
where ${}^d g = dgd^{-1} $.

\subsection{Iwahori-Hecke algebras}
Given a Coxeter system $(\mathcal{W},S)$ and a commutative ring $R$ along with a choice of paramter $q \in R^\times$ then the associated \textit{Iwahori-Hecke algebra} is the algebra $\mathcal{HW}$ with generators $T_s$ for $s \in S$ and the following relations:
\begin{equation*}
    \begin{split}
        \langle T_s, T_t\rangle ^{m_{st}} &= \langle T_t, T_s\rangle ^{m_{st}} \\
        (T_s+1)(T_s - q) &= 0 
    \end{split} \qquad\qquad
    \begin{split}
        s \neq t &\in S \\
        s &\in S
    \end{split}
\end{equation*}
where $\langle T_s, T_t\rangle ^m = T_sT_tT_s\ldots$ is again the alternating product of length $m$ starting with $T_s$. The relations of the first type define the monoid algebra of the \emph{Artin monoid} associated to $(\mathcal{W},S)$, so $\mathcal{HW}$ is a quotient of the Artin monoid algebra. If $q=1$, the second relations become $T_s^2 = 1$, in which case $\mathcal{HW}$ is simply the group algebra of the Coxeter group $\mathcal{W}$.

Throughout we will fix $R, q$, and take $\mathcal{H}_n$ to be the Iwahori-Hecke algebra of type $A_{n-1} \cong \mathfrak{S}_n$, and $\mathcal{HB}_n$ to be the Iwahori-Hecke algebra of type $B_n$---the inclusion $A_{n-1}$ into $B_n$ via the tail of the Coxeter diagram induces an inclusion $\mathcal{H}_n \leq \mathcal{HB}_n$. This is the generalisation to Iwahori-Hecke algebras of the inclusion of $\mathfrak{S}_n$ into the group of signed permutations.

From the relation $(T_s+1)(T_s - q) = 0$ there are two natural rank one modules of $\mathcal{HW}$: $R$ where the generators act by multiplication by $q$, or multiplication by $-1$. The first of these becomes the trivial module when $q=1$, so will be denoted $\mathbb{1}$ and thought of as the trivial module for Iwahori-Hecke algebras. Then, as described earlier, the homology of the Iwahori-Hecke algebra is defined to be $\text{Tor}^{\mathcal{HW}}_*(\mathbb{1}, \mathbb{1})$ where the action of $T_s$ on $\mathbb{1} \cong R$ is multiplication by $q$.

\begin{remark}
The rank one module where all generators act by multiplication by $-1$ may be thought of as the analogue for the sign representation of $\mathfrak{S}_n$ in the Iwahori-Hecke setting. If the parameter $q$ is $-1$, it is equal to the trivial module $\mathbb{1}$.
\end{remark}
    
We proceed with some consequences of Matsumoto's Theorem (\ref{matso}).

\begin{prop}\label{matso-ih}
Let $w \in \mathcal{W}$ and $w=s_1s_2\ldots s_t = \Tilde{s}_1\Tilde{s}_2\ldots \Tilde{s}_r$ be two reduced expressions. Then $T_{s_1}T_{s_2}\ldots T_{s_t} = T_{\Tilde{s}_1}T_{\Tilde{s}_2}\ldots T_{\Tilde{s}_r}$ in $\mathcal{HW}$.
\end{prop}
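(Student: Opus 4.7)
The plan is to deduce the proposition directly from Matsumoto's Theorem (\ref{matso}) together with the defining relations of the Iwahori-Hecke algebra. First, I would note that since both expressions are reduced representatives of $w$, we have $t = r = l(w)$, so the two products have the same number of factors. Then, by Matsumoto's Theorem, the two reduced words $s_1 s_2 \cdots s_t$ and $\tilde{s}_1 \tilde{s}_2 \cdots \tilde{s}_r$ are connected by a finite sequence of braid transforms $\langle s, t\rangle^{m_{s,t}} \mapsto \langle t, s\rangle^{m_{s,t}}$ applied to contiguous subwords.

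It therefore suffices to check that a single such braid move, when lifted to the corresponding product of $T$'s, leaves the element of $\mathcal{HW}$ unchanged. This is exactly the content of the first defining relation $\langle T_s, T_t\rangle^{m_{s,t}} = \langle T_t, T_s\rangle^{m_{s,t}}$ of $\mathcal{HW}$: if a reduced word contains a subword of the form $\langle s, t\rangle^{m_{s,t}}$, then in the product $T_{s_1} T_{s_2} \cdots T_{s_t}$ we may locally replace $\langle T_s, T_t\rangle^{m_{s,t}}$ with $\langle T_t, T_s\rangle^{m_{s,t}}$ without altering the overall product. Iterating this over the finite sequence of braid transforms supplied by Matsumoto's Theorem yields $T_{s_1} T_{s_2} \cdots T_{s_t} = T_{\tilde{s}_1} T_{\tilde{s}_2} \cdots T_{\tilde{s}_r}$, as required.

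There is no real obstacle here, since the quadratic relation $(T_s+1)(T_s-q) = 0$ is never invoked; the proposition is essentially a formal consequence of Matsumoto's Theorem matched against the braid relations built into $\mathcal{HW}$. The only mild subtlety worth flagging is that Matsumoto's theorem is stated for transforming one reduced word into another \emph{through reduced intermediate words}, so that we never need to apply any relation involving $s^2 = 1$ (which would correspond to the quadratic relation on $T_s$). This is what makes the reduction to the braid relations of $\mathcal{HW}$ clean.
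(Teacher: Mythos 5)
Your proof is correct and follows essentially the same approach as the paper: apply Matsumoto's Theorem to connect the two reduced words by a chain of braid moves, then observe that each such move lifts to the braid relation $\langle T_s, T_t\rangle^{m_{s,t}} = \langle T_t, T_s\rangle^{m_{s,t}}$ in $\mathcal{HW}$. Your remarks that $t=r=l(w)$ and that the quadratic relation is never needed (since Matsumoto keeps all intermediate words reduced) are correct and make explicit what the paper leaves implicit.
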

\begin{proof}
By Matsumoto's Theorem, the two expressions are related in $\mathcal{W}$ by a series of substitutions $\langle s_i, s_j \rangle ^{m_{s_is_j}} = \langle s_j, s_i \rangle ^{m_{s_is_j}}$. But the relation $\langle T_{s_i}, T_{s_j} \rangle ^{m_{s_is_j}} = \langle T_{s_j}, T_{s_i} \rangle ^{m_{s_is_j}}$ holds in $\mathcal{HW}$ so the same substitutions show that $T_{s_1}T_{s_2}\ldots T_{s_t} = T_{\Tilde{s}_1}T_{\Tilde{s}_2}\ldots T_{\Tilde{s}_r}$.
\end{proof}

By the above result, it is valid to define $T_w = T_{s_1}T_{s_2}\ldots T_{s_t}$ for any $w\in \mathcal{W}$ where $s_1s_2\ldots s_t$ is a reduced expression. The \textit{basis theorem} asserts that these $T_w$ form a basis for $\mathcal{HW}$ over $R$.

\begin{theorem}[Basis theorem, {\cite[4.4.6]{geck}}]\label{basis-theorem}
The set $\{T_w : w\in \mathcal{W} \}$ is a basis for $\mathcal{HW}$ over $R$. In particular, $\mathcal{HW}$ is a free $R$-module.
\end{theorem}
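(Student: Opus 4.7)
The plan is to establish spanning and linear independence separately, and to obtain linear independence by constructing an explicit faithful representation of $\mathcal{HW}$.

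For spanning, I would first prove the following multiplication formula, valid for any $s \in S$ and $w \in W$:
\[ T_s T_w = \begin{cases} T_{sw} & \text{if } l(sw) > l(w), \\ (q-1) T_w + q T_{sw} & \text{if } l(sw) < l(w). \end{cases} \]
The first case is Proposition \ref{matso-ih} applied to the reduced expression obtained by prepending $s$ to a reduced expression for $w$. The second case follows from the first by writing $w = s(sw)$ and using the quadratic relation $T_s^2 = (q-1)T_s + q$. With this formula, an easy induction on $k$ shows that any product $T_{s_1} \cdots T_{s_k}$ lies in the $R$-span of $\{T_w : w \in W\}$: inductively expand $T_{s_2} \cdots T_{s_k}$ as a linear combination of $T_w$'s, then apply the formula to each $T_{s_1} T_w$.

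For linear independence, I would build a representation on the free $R$-module $V = \bigoplus_{w \in W} R e_w$, defining $R$-linear endomorphisms
\[ \rho_s(e_w) = \begin{cases} e_{sw} & \text{if } l(sw) > l(w), \\ q\, e_{sw} + (q-1) e_w & \text{if } l(sw) < l(w). \end{cases} \]
Verifying the quadratic relation $(\rho_s + 1)(\rho_s - q) = 0$ is a direct case analysis on whether $l(sw)$ exceeds $l(w)$ or not. The main obstacle is verifying the braid relation $\langle \rho_s, \rho_t \rangle^{m_{s,t}} = \langle \rho_t, \rho_s \rangle^{m_{s,t}}$ for each pair $s \neq t$. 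Since $\rho_s$ and $\rho_t$ only shuffle $e_w$ within the left coset $W_{\{s,t\}} \cdot w$ of the dihedral parabolic subgroup, this reduces for each basis vector $e_w$ to a finite combinatorial computation inside a dihedral group, carried out by tracking the length function along the two alternating chains $w, sw, tsw, \ldots$ and $w, tw, stw, \ldots$ of length $m_{s,t}$ until one reaches the longest element of the relevant coset.

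Once these relations are checked, the universal property of $\mathcal{HW}$ gives an algebra map $\pi \colon \mathcal{HW} \to \mathrm{End}_R(V)$ with $\pi(T_s) = \rho_s$. By the first branch of the definition of $\rho_s$, a reduced expression $w = s_{i_1} \cdots s_{i_l}$ yields $\pi(T_w)(e_e) = \rho_{s_{i_1}} \cdots \rho_{s_{i_l}}(e_e) = e_w$, because each partial product $s_{i_j} \cdots s_{i_l}$ is itself reduced and so every step falls into the length-increasing case. Consequently, any relation $\sum_w a_w T_w = 0$ in $\mathcal{HW}$, applied to $e_e \in V$, gives $\sum_w a_w e_w = 0$, forcing all $a_w = 0$. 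This establishes linear independence and completes the proof.
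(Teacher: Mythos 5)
The paper does not prove this theorem; it is cited verbatim from Geck--Pfeiffer \cite[4.4.6]{geck}, so there is no in-paper proof to compare against. Your proposal is the standard proof of the basis theorem and, in outline, is exactly what the cited reference (and Bourbaki, Humphreys, etc.) does: spanning follows from the multiplication rule
\[ T_s T_w = \begin{cases} T_{sw} & l(sw) > l(w), \\ (q-1)T_w + qT_{sw} & l(sw) < l(w), \end{cases}\]
and linear independence follows by exhibiting a faithful representation on the free module $V = \bigoplus_{w} Re_w$ and observing that $\pi(T_w)e_e = e_w$. All of this is correct, including the inductive argument that every partial product of a reduced word is reduced.

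The one place where your proposal glosses over the genuine content is the verification of the braid relation for the operators $\rho_s$, which you rightly identify as ``the main obstacle.'' Your plan is to verify it directly by tracking lengths through the two alternating chains in a dihedral coset. This can be made to work, but it is the heavier path, and one has to handle the turnaround at the longest element of the dihedral coset carefully (lengths rise, then fall). The route taken in the sources you would be reproducing avoids this entirely: define both a left-multiplication family $\lambda_s$ and a right-multiplication family $\rho_s$ on $V$ by the same formulae, check the (easy, purely local) identity $\lambda_s \rho_t = \rho_t \lambda_s$ for all $s,t$, and then deduce the braid relations for the $\lambda_s$ by an induction that never needs any dihedral case analysis. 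If you carry out the direct dihedral computation you should be aware you are choosing the more laborious of two correct options; if you want the argument to be airtight and short, the commuting-left-and-right-operators trick is the one to use.
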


As well as the $R$-module structure, $\mathcal{HW}$ is a left and right $\mathcal{HW}_J$-module for any subset $J \subseteq S$ of generators.

\begin{prop}\label{Hw-free-Hj}
For $(\mathcal{W},S)$ a Coxeter system and $J \subseteq S$, $\mathcal{HW}$ is a free left $\mathcal{HW}_J$-module with basis $\{ T_x : x \in X_J \}$. Hence $\mathbb{1} \otimes_{\mathcal{HW}_J} \mathcal{HW}$ is free with basis $\{1\otimes T_x : x \in X_J \}$.

Similarly, $\mathcal{HW}$ is a free right $\mathcal{HW}_J$-module with basis $\{ T_x : x \in (X_J)^{-1} \}$, and $\mathcal{HW} \otimes_{\mathcal{HW}_J} \mathbb{1}$ is free with basis $\{T_x \otimes 1 : x \in (X_J)^{-1} \}$.
\end{prop}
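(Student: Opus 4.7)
The plan is to leverage the basis theorem (Theorem \ref{basis-theorem}) together with the unique coset decomposition $w = w_J \cdot x$, after observing that this decomposition is length-additive.

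First I would establish the key length identity: for every $w \in W$ there is a unique factorisation $w = w_J x$ with $w_J \in W_J$ and $x \in X_J$, and moreover $\ell(w) = \ell(w_J) + \ell(x)$. Existence and uniqueness are exactly the content of the coset-representative theorem cited just before the Mackey decomposition. The length-additivity follows from $x$ being the shortest element of $W_J x$, via the standard inductive argument on $\ell(w_J)$ using the exchange condition (this is the only non-formal ingredient, but it is a textbook fact about parabolic subgroups and I would simply cite it from Davis or Geck--Pfeiffer).

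Given this, a reduced expression for $w_J$ concatenated with a reduced expression for $x$ is a reduced expression for $w$, so by Proposition \ref{matso-ih} we have $T_w = T_{w_J} T_x$ in $\mathcal{HW}$. The basis theorem says that $\{T_w : w \in W\}$ is an $R$-basis for $\mathcal{HW}$ and that $\{T_{w_J} : w_J \in W_J\}$ is an $R$-basis for $\mathcal{H}_{W_J}$. Combining the two, the equality
\[ \mathcal{HW} \;=\; \bigoplus_{w \in W} R\, T_w \;=\; \bigoplus_{x \in X_J}\bigoplus_{w_J \in W_J} R\, T_{w_J} T_x \;=\; \bigoplus_{x \in X_J} \mathcal{H}_{W_J} \cdot T_x \]
exhibits $\{T_x : x \in X_J\}$ as a free left $\mathcal{H}_{W_J}$-module basis for $\mathcal{HW}$.

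For the ``hence'' clause, I would just apply $\mathbb{1} \otimes_{\mathcal{H}_{W_J}} -$ to the direct sum decomposition above; since tensor products commute with direct sums and $\mathbb{1} \otimes_{\mathcal{H}_{W_J}} \mathcal{H}_{W_J} \cong R$, the result is a free $R$-module with basis $\{1 \otimes T_x : x \in X_J\}$. The right-module statement is entirely symmetric: every $w \in W$ decomposes uniquely as $w = y \cdot w_J$ with $y \in X_J^{-1}$ and $w_J \in W_J$ (the set $X_J^{-1}$ being a complete set of minimal representatives for $W/W_J$), with $\ell(w) = \ell(y) + \ell(w_J)$, so $T_w = T_y T_{w_J}$ and the analogous direct-sum decomposition of $\mathcal{HW}$ holds. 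The main (and only) obstacle is the length-additivity of the parabolic decomposition; once that is in hand the rest is a direct bookkeeping argument on the basis given by the basis theorem.
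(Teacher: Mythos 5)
Your proof is correct. The paper itself states this proposition without proof, treating it as a standard consequence of the basis theorem (it corresponds to a result in Geck--Pfeiffer immediately following their Theorem 4.4.6, which the paper cites for the basis theorem). Your argument is precisely the textbook derivation: the length-additive decomposition $w = w_J x$, Matsumoto/Proposition \ref{matso-ih} to get $T_w = T_{w_J}T_x$, and the resulting $R$-basis regrouping into $\bigoplus_{x\in X_J}\mathcal{H}_{W_J}\cdot T_x$. The one ingredient you flag as needing to be imported --- length-additivity of the parabolic decomposition, not just minimality of the representative --- is indeed the crucial non-formal step and is correctly sourced to standard references; the paper's statement of the coset-representative theorem (shortest element in $W_J x$) gives only minimality, so you are right that something slightly stronger is required and must be cited. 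One further point worth making explicit, though it is also implicit in the paper: the inclusion $\mathcal{H}_{W_J}\hookrightarrow\mathcal{HW}$ respects the $T_w$-bases because $\ell_{W_J}(w_J)=\ell_W(w_J)$ for $w_J\in W_J$, so a $J$-reduced word for $w_J$ is also a reduced word in $W$; this is what lets you identify $T_{w_J}$ computed in $\mathcal{H}_{W_J}$ with $T_{w_J}$ computed in $\mathcal{HW}$. With that caveat spelled out, the argument and its symmetric right-module version are complete.
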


\subsection{Some notation} \label{notation}
We now define and discuss some notation for certain elements in Coxeter groups and Iwahori-Hecke algebras. The notation will also be introduced throughout the paper, so the reader may choose to ignore this section or return to it later.

The generators of the Coxeter group of type $B_n$ are throughout labelled as $u, s_1, s_2,\ldots,s_{n-1}$, where $u$ is the inital vertex in the Coxeter diagram. Similarly, the generators of the associated Iwahori-Hecke algebra $\mathcal{HB}_n$ are labelled $U, T_1, T_2,\ldots, T_{n-1}$, with $U=T_u$, and $T_i = T_{s_i}$.

\begin{itemize}
    \item For $n\geq a>b \geq 1$, take $s_{a,b}$ to be the element of $\mathfrak{S_n} \subseteq \mathcal{B}_n$ defined by:
    \[ s_{a,b} = s_{a-1}s_{a-2}\ldots s_b \]
    As a permutaion, this is
    \[ (a \quad a-1 \quad a-2  \quad \ldots \quad b+1 \quad b) \]
    the downwards cycle on $\{a, a-1, a-2, \ldots, b\}$.
    Observe that $s_{a,b}$ admits no M-moves, so by the solution the word problem it is reduced. We then define the following element of $\mathcal{HA}_{n-1} \subseteq \mathcal{HB}_n$:
    \[ T_{a,b} = T_{s_{a,b}} =  T_{a-1}T_{a-2}\ldots T_b \]
    where the last equality uses Proposition \ref{matso-ih}.
    
    \item For $m \leq n$, take $u_m$ to be the element of $\mathcal{B}_n$ defined by:
    \[ u_m = us_1s_2\ldots s_{m-1} \]
    In cycle notation, this is the product
    \[ (1 \quad -1) (1 \quad 2 \quad 3 \quad \ldots \quad m) \]
    so shifts $\{1, 2, 3, \ldots, m-1\}$ up by one, and sends $m$ to $-1$.
    Again, this admits no M-moves, so is reduced, and we define:
    \[ U_m = T_{u_m} = UT_1T_2\ldots T_{m-1} \]
    
    \item For $t\geq 0$, $m_1 > m_2 > \ldots > m_t$, define:
    \begin{equation*}
        v(m_1,m_2,\ldots,m_t) = v(\mathbf{m}) = u_{m_1}u_{m_2}\ldots u_{m_t} \\
    \end{equation*}
    
    \noindent From the discussion of $u_m$, $v(\mathbf{m})$ takes $m_1$ to $-1$, $m_2$ to $-2$ and so on, while sending the other letters to some positive letters. We shall see (Proposition \ref{vm-reduced}) that the $v(\mathbf{m})$ are reduced, so take:
    
    \[V(\mathbf{m}) = T_{v(\mathbf{m})} = U_{m_1}U_{m_2}\ldots U_{m_t} \]

\end{itemize}

\section{The complex of signed injective words}\label{complex-section}
In this section, we discuss some combinatorial complexes that relate to the group $\mathcal{B}_n$. The complex $\mathcal{C}^\pm(n)$ defined below is the combinatorial analogue to our algebraic complex $\mathcal{D}^\pm(n)$, and the proofs in this section motivate both our construction of $\mathcal{D}^\pm(n)$ and the approach to Theorem \ref{Dpm(n)-acyclic}. 

For $[n] = \{ 1,2,\ldots,n \}$, an injective word on $[n]$ is defined to be an ordered tuple $(a_0,\ldots,a_r)$ with $a_i \in [n]$ and $r < n$ such that no $a_i$ appears more than once.

\begin{definition}
Let $n\geq 0$. The \textit{complex of injective words} $\mathcal{C} (n)$ is the chain complex given in degree $r$ ($-1\leq r \leq n-1$) by the free $R$-module with basis the set of injective words $(a_0,\ldots, a_r)$ of length $r+1$ on $[n]$. $\mathcal{C}(n)_{-1}$ is a copy of $R$ generated by the empty word.

The differential $\partial^r: \mathcal{C}(n)_r \xrightarrow{} \mathcal{C} (n)_{r-1}$ is defined to be:
\[ \partial^r (a_0,\ldots, a_n) = \sum_{j=0}^r (-1)^j (a_0,\ldots, \widehat{ a_j}, \ldots, a_n)\]
where $\widehat{ a_j}$ means we omit the letter $a_j$ from the tuple.
\end{definition}

$\mathfrak{S}_n$ acts on the letters of each word in a natural way, so $\mathcal{C} (n)$ can be viewed as a chain complex of $R\mathfrak{S}_n$-modules. This complex is used in many proofs of homological stability for the permutation groups $\mathfrak{S}_n$, see for example \cite{maazen1979homology}, \cite{kerz2005complex}. In particular, the action of $\mathfrak{S}_n$ along with the following theorem is used in a Quillen spectral sequence argument:

\begin{theorem}\label{C(n)-acyclic}
\cite[2.1, pg 38]{maazen1979homology} $H_d(\mathcal{C} (n)) = 0$ for $d \leq n-2$.
\end{theorem}

For the group $\mathcal{B}_n$ of signed permutations, we introduce a new complex $\mathcal{C}^\pm (n)$ that will play an analogous role to  $\mathcal{C} (n)$. Define a signed injective word on $[n]$ to be an injective word on $[n]$ along with a choice of sign on each element---equivalently, an ordered tuple  $(\pm a_0,\ldots,\pm a_r)$ with $a_i \in [n]$ such that no $a_i$ appears more than once. Then the \textit{complex of signed injective words} $\mathcal{C}^\pm (n)$ is the complex given in degree $r$ ($-1 \leq r \leq n-1$) by the free $R$-module with basis the signed injective words of length $r+1$ on $[n]$, and the same differentials as $\mathcal{C} (n)$. $\mathcal{C}^\pm (n)$ carries an action of $\mathcal{B}_n$ (applying a signed permutation to each letter), so is a complex of $R\mathcal{B}_n$ modules.

We now prove the corresponding result to Theorem \ref{C(n)-acyclic}. 
The proof presented, while far from geodesic, will carry over to the setting of Iwahori-Hecke algebras and motivate the next sections of this paper, which deal with the proof of Theorem \ref{Dpm(n)-acyclic}.

\begin{theorem}\label{Cpm(n)-acyclic}
$H_d(\mathcal{C}^\pm (n)) = 0$ for $d \leq n-2$.
\end{theorem}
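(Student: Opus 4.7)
The plan is to filter $\mathcal{C}^\pm(n)$ by the number of negative letters appearing in each signed injective word, and to reduce the resulting filtration-quotient analysis to Theorem \ref{C(n)-acyclic} via a spectral sequence.

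First, I would let $F_k\mathcal{C}^\pm(n)$ denote the subcomplex spanned by those signed injective words with at most $k$ negative entries. Since every face map either omits a positive letter (preserving the number of negatives) or omits a negative letter (decreasing that count by one), each $F_k$ is indeed a subcomplex, yielding a bounded increasing filtration
\[
F_0 \subseteq F_1 \subseteq \cdots \subseteq F_n = \mathcal{C}^\pm(n), \qquad F_0 = \mathcal{C}(n).
\]
In the quotient $F_k/F_{k-1}$, which is spanned by words with exactly $k$ negatives, the terms obtained by omitting a negative letter are killed, so the induced differential only omits positive letters. As a consequence the \emph{negative sub-word} (the ordered tuple of negative entries in the order in which they appear) is preserved, and $F_k/F_{k-1}$ splits as a direct sum indexed by choices of negative sub-word $(-b_1,\ldots,-b_k)$; each summand has basis the interleavings of this fixed negative sub-word with an injective word on the complement $[n]\setminus\{b_1,\ldots,b_k\}$.

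Next I would aim to show that each such summand is acyclic in degrees $\leq n-2$. The idea is that the ``live'' structure in a summand is carried by the positive injective word on $[n]\setminus\{b_1,\ldots,b_k\}$ (a word contributing to $\mathcal{C}(n-k)$ of length $r+1-k$), while the $k$ negative letters and the choice of interleaving add $k$ to the homological degree without affecting the homological behaviour below the top. Concretely I would build a chain map from the summand to an appropriate shift of (possibly several copies of) $\mathcal{C}(n-k)$ and show it is a quasi-isomorphism, so that acyclicity in degrees $\leq n-k-2$ from Theorem \ref{C(n)-acyclic} upgrades to acyclicity in degrees $\leq n-2$ of the summand. This is the main obstacle: producing the quasi-isomorphism explicitly and carefully accounting for the signs that the shuffle structure introduces into the face maps is the delicate combinatorial step, and it is precisely this structure that foreshadows the filtration by distinguished coset representatives of the form $v(\mathbf{m})$ used to prove Theorem \ref{Dpm(n)-acyclic} in later sections.

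Finally, I would run the filtration spectral sequence
\[
E^1_{p,q} = H_{p+q}(F_p/F_{p-1}) \Longrightarrow H_{p+q}(\mathcal{C}^\pm(n)).
\]
The preceding step shows $E^1_{p,q}=0$ whenever $p+q \leq n-2$, and since the filtration is bounded the spectral sequence collapses in this range to give $H_d(\mathcal{C}^\pm(n)) = 0$ for $d \leq n-2$, as required.
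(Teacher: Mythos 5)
Your filtration is genuinely different from the paper's, and the difference is precisely where your argument has a gap. You filter by the \emph{number} of negative entries, whereas the paper filters by the \emph{position} of the negatives: its $F_p$ is spanned by words in which all negatives occur among the first $p$ letters. With the paper's choice, the face maps that survive in the quotient $F_p/F_{p-1}$ only delete letters in positions $>p$, so the first $p$ letters (together with all their signs) are completely frozen. This yields, degree by degree, a bijection between a basis of each sign-summand $M_{\mathbf{v}}$ and a basis of $R\mathfrak{S}_n \otimes_{R\mathfrak{S}_{n-p}} \Sigma^p \mathcal{C}(n-p)$, and Lemma \ref{C-quotient-lemma} is a genuine degree-wise isomorphism of chain complexes. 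The high acyclicity of $F_p/F_{p-1}$ then follows immediately from Theorem \ref{C(n)-acyclic} and freeness of $R\mathfrak{S}_n$ over $R\mathfrak{S}_{n-p}$.

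With your count-filtration, the quotient $F_k/F_{k-1}$ does split according to the ordered negative sub-word $(-b_1,\dots,-b_k)$, as you say, but the surviving face maps still delete positive letters sitting \emph{between} negatives, so the negatives move around relative to the positive letters. Concretely, the summand indexed by $(-b_1,\dots,-b_k)$ has, in degree $k+r$, a basis consisting of pairs (injective word of length $r+1$ on $[n]\setminus\{b_1,\dots,b_k\}$, interleaving pattern); its rank is $\binom{k+r+1}{k}\cdot\bigl|\mathcal{C}(n-k)_r\bigr|$. Since the binomial factor depends on $r$, this summand is not degree-wise isomorphic to a fixed number of copies of $\Sigma^k\mathcal{C}(n-k)$, so there is no ``straightforward'' chain map of the kind you describe and you cannot read off the acyclicity by transport along an isomorphism as the paper does. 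Something like a chain homotopy that collapses the shuffle structure (pushing all negatives to the front), or a further filtration of the summand, would be needed to establish the required acyclicity, and your sketch stops exactly at this point. This is the genuine gap: you have correctly set up the filtration spectral sequence and the direct-sum decomposition, but the key input --- $H_d(F_k/F_{k-1})=0$ for $d\leq n-2$ --- is asserted rather than proved, and the route you indicate does not obviously lead there. The paper's choice of filtration is designed precisely to avoid this shuffle problem; it is also the choice that transports cleanly to the Iwahori--Hecke setting in Section \ref{Fp-section}, where the generators $V(\mathbf{m})$ parameterise positions of negatives via $m_1 \leq p$, not counts.
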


To accomplish this, we first define a filtration $F_p(n)$ of $\mathcal{C}^\pm(n)$ by the position of the negative elements in a given signed injective word.

\begin{definition}
For $0 \leq p \leq n$, let $F_p \subseteq {\mathcal{C}^\pm (n)}$ be the subcomplex spanned by the words in which all negatives appear in the first $p$ letters. Thus $F_0 = \mathcal{C}(n)$ is the complex of (unsigned) injective words, and 
\[\mathcal{C}(n) = F_0 \subseteq F_1 \subseteq \ldots \subseteq F_n = \mathcal{C}^\pm(n)\]
is a filtration of $\mathcal{C}^\pm(n)$. The $F_p$ are not $R\mathcal{B}_n$-submodules, but they are $R\mathfrak{S}_n$-submodules, since the action of $\mathfrak{S}_n$ does not change the signs of the letters.
\end{definition}

\begin{remark}
From the algebraic perspective that we will develop later, it is more natural to count positions from the right of the word, rather than the left. Unfortunately, this does not give a filtration in the Iwahori-Hecke setting, for technical reasons. In fact the main difficulty in this work was identifying the correct filtration: many of the more obvious filtrations of $\mathcal{C}^\pm (n)$ do not carry over to the Iwahori-Hecke setting.  
\end{remark}

Observe that by Theorem \ref{C(n)-acyclic}, we need only show that $F_p/F_{p-1}$ is highly acyclic. 

\begin{definition}
For $X$ a chain complex, take the suspension $\Sigma^k X$ to be the complex with $(\Sigma^k X)_r = X_{r-k}$, and $\partial^r_{\Sigma^k X} = \partial^{r-k}_X$.
\end{definition}

\begin{lemma}\label{C-quotient-lemma}
There is an isomorphism of chain complexes of $R\mathfrak{S}_n$ modules
\[ \Bigl(R\mathfrak{S}_n \otimes_{R\mathfrak{S}_{n-p}} \Sigma^p \mathcal{C}(n-p) \Bigr )^{\oplus 2^{p-1}} \longrightarrow F_p/F_{p-1}\]
\end{lemma}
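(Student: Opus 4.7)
The plan is to construct the isomorphism explicitly on basis elements for each sign pattern and then check the required properties directly. The key observation is that a signed injective word of length $r+1$ represents a nonzero class in $F_p/F_{p-1}$ if and only if it carries a negative sign at position $p-1$ (otherwise all negatives already sit in positions $0,\ldots,p-2$, placing it in $F_{p-1}$); the remaining signs $\epsilon = (\epsilon_0,\ldots,\epsilon_{p-2}) \in \{\pm\}^{p-1}$ at positions $0,\ldots,p-2$ are free, which explains the $2^{p-1}$ copies in the direct sum.

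For each fixed $\epsilon$, I would embed $\mathfrak{S}_{n-p} \hookrightarrow \mathfrak{S}_n$ as the pointwise stabiliser of $\{1,\ldots,p\}$, regard $\mathcal{C}(n-p)$ as the complex of injective words on $\{p+1,\ldots,n\}$, and define
\[ \Phi_\epsilon: R\mathfrak{S}_n \otimes_{R\mathfrak{S}_{n-p}} \Sigma^p \mathcal{C}(n-p) \longrightarrow F_p/F_{p-1} \]
by the concatenation rule
\[ \pi \otimes (b_0,\ldots,b_k) \longmapsto (\epsilon_0 \pi(1), \ldots, \epsilon_{p-2}\pi(p-1), -\pi(p), \pi(b_0), \ldots, \pi(b_k)). \]
Well-definedness over $\otimes_{R\mathfrak{S}_{n-p}}$ follows because any $\sigma \in \mathfrak{S}_{n-p}$ fixes $\{1,\ldots,p\}$ pointwise, leaving the prefix alone, while the tensor relation $\pi\sigma \otimes x = \pi \otimes \sigma x$ absorbs $\sigma$ into the suffix. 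The $\mathfrak{S}_n$-equivariance is immediate from the letter-by-letter action on both sides. Bijectivity of $\bigoplus_\epsilon \Phi_\epsilon$ on bases follows from the fact that $R\mathfrak{S}_n$ is free over $R\mathfrak{S}_{n-p}$ with basis indexed by ordered $p$-tuples from $[n]$ (the prefix letters); combined with a basis word of $\mathcal{C}(n-p)$ (the suffix letters on the complementary indices), this matches exactly the basis of $F_p/F_{p-1}$.

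The main technical point is the chain map condition. In the ambient differential $\partial$, any deletion at a position $j \leq p-1$ either shifts the distinguishing negative down to position $p-2$ or removes it outright; both kinds of terms land in $F_{p-1}$ and hence vanish in the quotient. Only the deletions at positions $j \geq p$ survive, carrying signs $(-1)^{p+k}$ for $j=p+k$, whereas the differential on $\Sigma^p \mathcal{C}(n-p)$ contributes the corresponding signs $(-1)^k$. This uniform factor of $(-1)^p$ can be absorbed by multiplying $\Phi_\epsilon$ in degree $r$ by $(-1)^{pr}$ (or equivalently by adopting the Koszul-signed suspension). This sign bookkeeping is really the only obstacle; the underlying identification of prefix data with coset representatives and suffix data with the inner complex is a direct reindexing, and summing over the $2^{p-1}$ sign patterns assembles the desired chain isomorphism.
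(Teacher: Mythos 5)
Your proposal is correct and follows essentially the same route as the paper: identify $F_p/F_{p-1}$ with the words having their $p$-th letter negative, split it into $2^{p-1}$ pieces according to the (unchanging) sign pattern on the first $p-1$ letters, and map $\pi\otimes(\text{suffix})$ to a concatenated word with the chosen prefix signs. Your observation about the uniform $(-1)^p$ discrepancy between the quotient differential and the unsigned suspension differential is actually more careful than the paper's treatment, which asserts the chain-map property without addressing this factor; your fix of rescaling by $(-1)^{pr}$ in degree $p+r$ (or using a Koszul-signed suspension) resolves it cleanly and, in any case, the homology is unaffected.
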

\begin{proof}
Let $-1 \leq r \leq n - p - 1$. In degree $p+r$, $F_p$ is the free $R$-module with basis all words of the form 
\[ (\pm x_1, \pm x_2, \ldots, \pm x_p, y_0, y_1, \ldots, y_r)\]
(that is, all negative terms occuring in the first $p$ places). Thus $F_p/F_{p-1}$ is free with basis (in degree $p+r$) all words of the form
\[ (\pm x_1, \pm x_2, \ldots, - x_p, y_0, y_1, \ldots, y_r)\]
On such a word, the first $p$ face maps delete an $x_i$, meaning all negatives then lie in the first $p-1$ places (ie, the image lies in $F_{p-1}$). The differential in the quotient is thus:
\[ \partial^{p+r} (\pm x_1, \ldots, - x_p, y_0, \ldots, y_r) = (-1)^p \sum_{j=0}^r (-1)^j(\pm x_1, \ldots, - x_p, y_0, \ldots, \widehat{y_j}, \ldots ,y_r)\]

In particular, the signs of $x_1,\ldots,x_{p-1}$ are not changed by any face map. Thus for each $2^{p-1}$ choices of signs ($\mathbf{v} \in \{ \pm 1 \}^{p-1}$), there is a $R\mathfrak{S}_n$-subcomplex $M_{\mathbf{v}}$ of $F_p/F_{p-1}$ spanned in degree $p+r$ by all words of the form: 
\[ (\mathbf{v}_1 \cdot x_1, \ldots, \mathbf{v}_{p-1} \cdot x_{p-1}, -x_p, y_0,\ldots, y_r) \]
and there is a decomposition:
\[ F_p/F_{p-1} = \bigoplus_{\mathbf{v} \in \{ \pm 1 \}^{p-1}} M_\mathbf{v} \]
Consider now the map $\phi:  R\mathfrak{S}_n \otimes_{R\mathfrak{S}_{n-p}} \Sigma^p \mathcal{C}(n-p) \longrightarrow M_{\mathbf{v}}$ given in degree $p+r$ by
\[ \sigma \otimes (y_0,\ldots,y_r) \longmapsto \sigma (\mathbf{v}_1 \cdot (n-p+1),\ldots, \mathbf{v}_{p-1} \cdot (n-1), -n,y_0,\ldots,y_r) \]
This is a chain map by the above discussion of the differential in $F_p/F_{p-1}$ (we omit the leading factor $(-1)^p$ in $\partial^{p+r}$ above, as this does not affect homology). For each injective word $x = (x_1, \ldots, x_p)$ on $[n]$, pick $\sigma_x$ with $x = \sigma_x(n-p+1,\ldots,n)$. The $\sigma_x$ are a set of coset representatives for $\mathfrak{S}_n / \mathfrak{S}_{n-p}$, so every $\sigma \otimes (y_0,\ldots,y_r)$ may be written as $\sigma_x \otimes (y_0',\ldots,y_r')$ for unique $x$, $y_i'$. Then $\phi$ has inverse given by
\[ (\mathbf{v}_1 \cdot x_1, \ldots, \mathbf{v}_{p-1} \cdot x_{p-1}, -x_p, y_0,\ldots, y_r) \longmapsto \sigma_x \otimes \sigma_x^{-1} ( y_0,\ldots, y_r) \]

\noindent so $M_\mathbf{v} \cong R\mathfrak{S}_n \otimes_{R\mathfrak{S}_{n-p}} \Sigma^p \mathcal{C}(n-p)$ for all $\mathbf{v}$.
\end{proof}

The theorem now follows immediately.

\begin{proof}[Proof of Theorem \ref{Cpm(n)-acyclic}] $R\mathfrak{S}_n$ is free as a right $R\mathfrak{S}_{n-p}$ module, so the above lemma implies that:
\[ H_d(F_p/F_{p-1}) = R\mathfrak{S}_n \otimes_{R\mathfrak{S}_{n-p}} \Bigl (H_{d-p}(\mathcal{C}(n-p))\Bigr )^{\oplus 2^{p-1}} \]
which is $0$ for $d \leq n - 2$ by Theorem \ref{C(n)-acyclic}.\end{proof}

$\mathcal{C}^\pm (n)$ can now be used to produce a proof of homological stability for $\mathcal{B}_n$---this also follows from the result of Hepworth for homological stability for families of Coxeter groups \cite{hepworth2016homological}, or Hatcher and Wahl's result on homological stability for wreath products (Proposition 1.6 of \cite{hatcher2010stabilization}).

\subsection{The algebraic complex of injective words}
In order to translate the results of the previous discussion to the Iwahori-Hecke algebra $\mathcal{HB}_n$, we first provide an algebraic description of the complex $\mathcal{C}^\pm (n)$. A detailed discussion of this construction for the case of the complex of injective words $\mathcal{C} (n)$ can be found in Section 4 of \cite{Hepworth2020}.

\begin{definition}
Let the complex $\widetilde{\mathcal{C}}^\pm (n)$ be given in degree $r$ ($-1 \leq r \leq n-1$) by
\[\widetilde{\mathcal{C}}^\pm (n)_r = R\mathcal{B}_n\otimes_{R\mathcal{B}_{n-r-1}} \mathbb{1}\]
with differential $\partial^r: R\mathcal{B}_n\otimes_{R\mathcal{B}_{n-r-1}} \mathbb{1} \longrightarrow R\mathcal{B}_n\otimes_{R\mathcal{B}_{n-r}} \mathbb{1} $
\[\partial^r_j(\sigma \otimes 1) = \sigma s_{n-r+j, n-r} \otimes 1\]
\[\partial^r = \sum_{j=0}^r (-1)^j \partial^r_j\]
where $s_{a,b} = s_{a-1}s_{a-2}\ldots s_b$ for $s_i$ the generators of $\mathfrak{S}_n \subseteq \mathcal{B}_n$ as a Coxeter group.
\end{definition}

A signed injective word of length $r+1$ on $[n]$ may be thought of as a record of the image of $n-r,n-r+1,\ldots,n$ under a signed permutation $\sigma$. But the image of $n-r,n-r+1,\ldots,n$ under $\sigma$ is determined uniquely by the coset of $\sigma \mathcal{B}_{n-r-1}$, whence we obtain a bijection from the set of signed injective words to the cosets $\mathcal{B}_n/\mathcal{B}_{n-r-1}$. By standard properties of the group algebra, $R\mathcal{B}_n\otimes_{R\mathcal{B}_{n-r-1}} \mathbb{1}$ is the free $R$-module with basis $\sigma_x \otimes 1$ for $\{\sigma_x\}$ a set of coset representatives of $\mathcal{B}_n/\mathcal{B}_{n-r-1}$, so there is a natural identification $R\mathcal{B}_n/\mathcal{B}_{n-r-1} \cong R\mathcal{B}_n\otimes_{R\mathcal{B}_{n-r-1}} \mathbb{1}$.

To understand the face maps, we may observe that the projection $\mathcal{B}_n/\mathcal{B}_{n-r-1} \xrightarrow{ } \mathcal{B}_n/\mathcal{B}_{n-r}$ is given by forgetting the image of $n-r$, which under the above bijection corresponds to discarding the leftmost letters of an injective word. If we multiply on the right by the permutation $s_{n-r+j,n-r} = (n-r+j \quad n-r+j-1 \quad \ldots \ n-r)$ before the projection map, then the image of $n-r+j$ is instead forgotten, so the $j^{th}$ letter of the injective word is discarded, and all preceeding letters shifted up one space. This precisely the effect of deleting the $j^{th}$ element.

The next proposition simply formalises this discussion.

\begin{prop}\label{alg-complex}
The chain complex $\widetilde{\mathcal{C}}^\pm (n)$ is isomorphic to $\mathcal{C}^\pm (n)$.
\end{prop}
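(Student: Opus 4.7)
The plan is to define a degreewise isomorphism of $R$-modules
\[ \phi_r : R\mathcal{B}_n \otimes_{R\mathcal{B}_{n-r-1}} \mathbb{1} \longrightarrow \mathcal{C}^\pm(n)_r, \qquad \sigma \otimes 1 \longmapsto \bigl(\sigma(n-r), \sigma(n-r+1), \ldots, \sigma(n)\bigr), \]
and then verify that the collection $\{\phi_r\}$ commutes with the differentials. The proposition is essentially the formalisation of the discussion already given in the paper, so the work is a careful bookkeeping exercise.

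First I would check that $\phi_r$ is a well-defined isomorphism of $R$-modules. Well-definedness uses that $\mathcal{B}_{n-r-1}$ is generated by $u, s_1, \ldots, s_{n-r-2}$, all of which fix the points $n-r, n-r+1, \ldots, n$; hence for any $\tau \in \mathcal{B}_{n-r-1}$, $\sigma\tau(n-r+k) = \sigma(n-r+k)$ for $0 \leq k \leq r$, so $\sigma \otimes 1 = \sigma\tau \otimes 1$ is sent to the same signed word. Bijectivity follows from the standard identification $R\mathcal{B}_n \otimes_{R\mathcal{B}_{n-r-1}} \mathbb{1} \cong R[\mathcal{B}_n/\mathcal{B}_{n-r-1}]$ together with the evident bijection between $\mathcal{B}_n/\mathcal{B}_{n-r-1}$ and signed injective words of length $r+1$ on $[n]$: every such word $(b_0, \ldots, b_r)$ arises from some $\sigma \in \mathcal{B}_n$ with $\sigma(n-r+k) = b_k$, and two such $\sigma$ differ by an element of $\mathcal{B}_{n-r-1}$.

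The main step is checking that $\phi$ commutes with the face maps $\partial^r_j$. Adopting the convention $(\sigma\tau)(i) = \sigma(\tau(i))$, I would compute the permutation $s_{n-r+j,n-r} = s_{n-r+j-1}s_{n-r+j-2}\cdots s_{n-r}$ as the cycle $(n-r+j,\ n-r+j-1,\ \ldots,\ n-r)$, which acts on $\{n-r+1,\ldots, n\}$ by
\[ n-r+k \longmapsto \begin{cases} n-r+k-1 & 1 \leq k \leq j, \\ n-r+k & k > j, \end{cases} \]
with the edge case $j=0$ handled by interpreting $s_{n-r,n-r}$ as the empty product $1$. Applying $\phi_{r-1}$ to $\sigma\, s_{n-r+j,n-r} \otimes 1$ therefore produces the word
\[ \bigl(\sigma(n-r),\, \sigma(n-r+1),\, \ldots,\, \sigma(n-r+j-1),\, \sigma(n-r+j+1),\, \ldots,\, \sigma(n)\bigr), \]
which is exactly the $j$-th face $\partial^r_j \phi_r(\sigma \otimes 1)$ obtained by deleting the letter $\sigma(n-r+j)$ from $\phi_r(\sigma \otimes 1)$. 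Summing over $j$ with signs $(-1)^j$ shows $\phi_{r-1} \circ \partial^r = \partial^r \circ \phi_r$, completing the proof.

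The only real obstacle is notational: one must fix a convention for multiplication in the symmetric/signed permutation group and then track which index of a cycle goes where. Once the correct convention is pinned down (the one above), the computation is routine, and no sign subtleties from $\mathcal{B}_n$ enter because $s_{n-r+j,n-r}$ lies in the unsigned subgroup $\mathfrak{S}_n$ and thus acts by a plain permutation on the letters.
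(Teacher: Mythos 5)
Your proposal defines the same map $\sigma \otimes 1 \mapsto (\sigma(n-r), \ldots, \sigma(n))$ as the paper, checks well-definedness and bijectivity via the coset identification, and verifies commutation with the face maps by tracking the action of the cycle $s_{n-r+j,n-r}$, which is exactly what the paper's proof does. The argument is correct and follows the same route; the only difference is that you spell out the well-definedness check and the cycle computation in slightly more detail.
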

\begin{proof}
In degree $r$, let $\phi: \widetilde{\mathcal{C}}^\pm (n) \xrightarrow{} \mathcal{C}^\pm (n)$ be the map:
\[ \sigma \otimes 1 \longmapsto \sigma(n-r, n-r+1,\ldots, n)\]

$\widetilde{\mathcal{C}}^\pm (n)_r$ is free with basis $\sigma_x \otimes 1$ for $\{\sigma_x\}$ coset representatives of $\mathcal{B}_n/\mathcal{B}_{n-r-1}$, each of which is uniquely determined by the image of $(n-r,\ldots,n)$. $\mathcal{C}^\pm (n)$ has basis the set of signed injective words of length $r+1$ on $[n]$, thus $\phi_r$ is a bijection on basis elements, so is an isomorphism.

To see that $\phi$ is a chain map, we calculate:
\begin{align*}
    \partial^r_j (\phi_r (\sigma \otimes 1)) &= \partial^r_j(\sigma(n-r, n-r+1,\ldots, n))\\
     &= \partial^r_j(\sigma (n-r), \sigma (n-r+1), \ldots, \sigma(n)) \\
     &= (\sigma (n-r), \ldots, \widehat{ \sigma (n-r+j)}, \ldots, \sigma(n)) \\
     &= (\sigma(s_{n-r+j,n-r} (n-r+1)), \ldots, \sigma((s_{n-r+j,n-r}n)))\\
     &= \phi_{r-1}(\partial^r_j (\sigma \otimes 1))
\end{align*}
so $\phi$ commutes with the face maps, and hence the differentials.
\end{proof}

\subsection{The complex $\mathcal{D}^\pm (n)$}
In order to apply the Quillen spectral sequence argument to obtain a proof of Theorem \ref{main-thm}, we wish to construct a complex over the Iwahori-Hecke algebra $\mathcal{HB}_n$ that plays the same role as $\mathcal{C}^\pm (n)$ does for $R\mathcal{B}_n$. The algebraic description of $\mathcal{C}^\pm(n)$ indicates precisely what this should be.

\begin{definition}
Let the complex $\mathcal{D}^\pm(n)$ be given in degree ($-1 \leq r \leq n-1$) by
\[ \mathcal{D}^\pm(n)_r = \mathcal{HB}_n \otimes_{\mathcal{HB}_{n-r-1}} \mathbb{1} \]
with differential $\partial^r: R\mathcal{B}_n\otimes_{R\mathcal{B}_{n-r-1}} \mathbb{1} \longrightarrow R\mathcal{B}_n\otimes_{R\mathcal{B}_{n-r}} \mathbb{1} $
\[\partial^r_j(x \otimes 1) = x T_{n-r+j, n-r} \otimes 1\]
\[\partial^r = \sum_{j=0}^r (-1)^j q^{-j} \partial^r_j\]
where $T_{a,b} = T_{a-1}T_{a-2}\ldots T_b$ for $T_i$ the generators of $\mathcal{H}_n \subseteq \mathcal{HB}_n$.
\end{definition}
\begin{remark}
The powers of $q$ appearing in $\partial^r$ are necessary for this to be a chain complex; recall that the generators of $\mathcal{HB}_n$ act on $\mathbb{1}$ via multiplication by $q$, which introduces powers of $q$ to the computations.
\end{remark}

 Proposition \ref{alg-complex} shows that this is the same as the complex of signed injective words for $q=1$. $\mathcal{D}^\pm (n)$ is very similar to other constructions occuring in the literature around homological stability for diagram algebras (cf.~the induced modules $\text{TL}_n \otimes_{\text{TL}_m} \mathbb{1}$ in \cite{boyd2020homology} and $\text{Br}_n \otimes_{\text{Br}_m} \mathbb{1}$ in \cite{boyd2021homology}). In particular, the complex $\mathcal{D}(n)$ of \cite{Hepworth2020} (whence our naming and notation) is identical to $\mathcal{D}^\pm (n)$ with $\mathcal{H}_n$ in place of $\mathcal{HB}_n$. The fact that the differentials are well-defined and that $\partial^{r-1} \circ \partial^r = 0$ are Lemma 6.4 and 6.6 in \cite{Hepworth2020} (which demonstrates the necessity of the powers of $q$ in $\partial^r$), we do not repeat them here. 

To transport the proof of Theorem \ref{Cpm(n)-acyclic} to the Iwahori-Hecke setting, two things must be accomplished. First, an algebraic description of the filtration $F_p$ must be found---this is completed in Sections \ref{coset-reps} and \ref{Fp-section}. Secondly, the quotients $F_p/F_{p-1}$ must be identified, and a result corresponding to Lemma \ref{C-quotient-lemma} proved---this is Section \ref{Fp-quotients-section}. Some choices made in the proof of Lemma \ref{C-quotient-lemma} become less natural when looked at from the algebraic perspective, so the results in Section \ref{Fp-quotients-section} have a slightly different form. The overarching argument, however, is morally the same, and in particular the decomposition of $F_p/F_{p-1}$ into subcomplexes relating to $\mathcal{C}(n-p)$ (resp. $\mathcal{D}(n-p)$ of \cite{Hepworth2020}) still holds.

\section{Coset representatives for $\mathfrak{S}_n\backslash \mathcal{B}_n$}\label{coset-reps}
Observe that if $\mathcal{C}^\pm (n)_r$ is identified with $R\mathcal{B}_n\otimes_{R\mathcal{B}_{n-r-1}} \mathbb{1} \cong R[\mathcal{B}_n/\mathcal{B}_{n-r-1}] $, then the position of negatives is determined by the double coset $\mathfrak{S}_n \backslash \mathcal{B}_n / \mathcal{B}_{n-r-1}$. Indeed, two words have negatives appearing in the same places if and only if there is an (unsigned) permutation relating one to the other. Thus in order to determine the algebraic analogue of our filtration in the previous section, it will be instructive to examine these double cosets, and in particular the cosets $\mathfrak{S}_n \backslash \mathcal{B}_n$.

Recall that $X_{A_{n-1}}^{B_n}$ is the set of distinguished right coset representatives for $\mathcal{A}_{n-1} \cong \mathfrak{S}_n$ in $\mathcal{B}_n$. By the Mackey decomposition (Proposition \ref{mackey}) applied to $A_{n-1}, B_{n-1} \subseteq B_n$:

\begin{equation}\label{mackey-sn}
     X_{A_{n-1}}^{B_n} = \bigsqcup_{d \in X_{A_{n-1} B_{n-1}}} d\cdot X^{B_{n-1}}_{A_{n-1}^d \cap B_{n-1}  } 
\end{equation}

Hence we first find $X_{A_{n-1} B_{n-1}}$ (the distinguished double coset representatives $d$ for $\mathfrak{S}_n \backslash \mathcal{B}_n / \mathcal{B}_{n-1}$), then calculate $A_{n-1}^d \cap B_{n-1}$ (where the intersection refers only to that of generating sets) to reconstruct $X_{A_{n-1}}^{B_n}$.

Recall from Section \ref{notation} that for $m\leq n$, $u_m = us_1s_2\ldots s_{m-1} \in \mathcal{B}_n$ (with $u_m = u$ when $m=1$).

\begin{lemma}\label{sn-bn-b(n-1)}
The distinguished double coset representatives for $\mathfrak{S}_n \backslash \mathcal{B}_n / \mathcal{B}_{n-1}$ are
\[ X_{A_{n-1} B_{n-1}} = \{ 1, u_n \} \]
\end{lemma}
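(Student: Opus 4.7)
The plan is to identify the two double cosets directly using the signed permutation picture, produce $1$ and $u_n$ as representatives, and then check minimality.

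First, I would use that $\mathfrak{S}_n\backslash\mathcal{B}_n$ is in bijection with sign patterns in $\{\pm\}^n$: the left $\mathfrak{S}_n$-action is by unsigned permutation of values, which preserves the tuple $(\mathrm{sign}\,w(1),\ldots,\mathrm{sign}\,w(n))$, and two signed permutations with the same sign pattern differ by a unique element of $\mathfrak{S}_n$. Next, I would analyse the right $\mathcal{B}_{n-1}$-action on such patterns: for $b \in \mathcal{B}_{n-1}$ one has $b(n)=n$ and $b$ induces an arbitrary signed permutation on $\{1,\ldots,n-1\}$, so $(wb)(i) = w(b(i))$ shows that the first $n-1$ entries of the sign tuple can be reshuffled and re-signed freely, while $\mathrm{sign}\,w(n)$ is invariant. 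Therefore there are exactly two double cosets, indexed by $\mathrm{sign}\,w(n) \in \{\pm\}$.

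Second, I would exhibit $1$ and $u_n$ as representatives of the two classes. Clearly $1(n)=n>0$. For $u_n = us_1s_2\cdots s_{n-1}$, applying the factors right-to-left sends $n \mapsto n-1 \mapsto \cdots \mapsto 1 \mapsto -1$, so $u_n(n)=-1 < 0$. Hence $\{1,u_n\}$ has one representative from each double coset.

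Third, I would verify that both lie in $X_{\mathfrak{S}_n\mathcal{B}_{n-1}}$, i.e.\ that each is $(J,K)$-reduced for $J = \{s_1,\ldots,s_{n-1}\}$ and $K = \{u, s_1,\ldots,s_{n-2}\}$. For $1$ this is trivial. For $u_n$, the main point is that the written expression $us_1s_2\cdots s_{n-1}$ admits no M-moves: there is no $s^2$ cancellation, no $s_is_{i+1}s_i$ pattern for a length-$3$ braid move, no alternating length-$4$ pattern in $u,s_1$ for the $m_{u,s_1}=4$ braid move, and $u$ cannot commute past $s_1$ since $m_{u,s_1}\neq 2$. By Theorem \ref{word-problem} this is a reduced expression, and by Theorem \ref{matso} (Matsumoto) it is the \emph{only} reduced expression for $u_n$. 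It begins with $u \notin J$ and ends with $s_{n-1}\notin K$, so $u_n$ is $(J,K)$-reduced.

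The main obstacle is the third step: one must genuinely rule out all braid-equivalent reduced expressions starting in $J$ or ending in $K$, which is why the uniqueness argument via ``no applicable M-move'' is essential. Everything else is bookkeeping with the signed-permutation action.
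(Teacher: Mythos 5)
Your proof is correct, and it takes a genuinely different route from the paper's. The paper first enumerates all $2n$ distinguished right coset representatives for $\mathcal{B}_n/\mathcal{B}_{n-1}$ (via explicit words admitting no M-moves) and then intersects with the $(\mathfrak{S}_n,\emptyset)$-reduced elements to isolate $1$ and $u_n$. You instead compute the double coset space $\mathfrak{S}_n\backslash\mathcal{B}_n/\mathcal{B}_{n-1}$ directly: you observe that $\mathfrak{S}_n\backslash\mathcal{B}_n$ is classified by sign patterns, show that the right $\mathcal{B}_{n-1}$-action leaves only $\mathrm{sign}\,w(n)$ invariant, and conclude there are exactly two double cosets; you then exhibit $1$ and $u_n$ as their distinguished representatives. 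Both proofs rely on the same key technical move---that $u_n = us_1\cdots s_{n-1}$ admits no M-moves and therefore has a unique reduced expression, so membership in $X_{JK}$ can be read off the first and last letter---but your approach has the advantage of not needing to enumerate and verify the full set of $2n$ right coset representatives, at the (small) cost of setting up the sign-pattern bijection. Your argument is in fact closer in spirit to the combinatorial perspective the paper develops in Section~\ref{complex-section}, where sign patterns of signed injective words drive the construction.
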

\begin{proof}
We first find the distinguished representatives for $\mathcal{B}_n / \mathcal{B}_{n-1}$. Consider the elements:
\begin{align*}
    s_ts_{t+1}\ldots s_{n-1} \qquad &t=1,\ldots,n\\
    s_{t}s_{t-1}\ldots s_1 u s_1 s_2 \ldots s_{n-1} \qquad &t=0,\ldots,n-1
\end{align*}
All elements in these two families admit no M-moves, so all are reduced and furthermore have unique reduced form (Theorem \ref{word-problem}). None end in an element in $\mathcal{B}_{n-1}$, so all are $(\emptyset, B_{n-1})$-reduced, hence they are distinguished coset representatives for $\mathcal{B}_n/\mathcal{B}_{n-1}$. $|\mathcal{B}_n/\mathcal{B}_{n-1}| = 2n$, so these are all the distinguished coset representatives (they are distinct by considering the length).

The distinguished double coset representatives for $\mathfrak{S}_n \backslash \mathcal{B}_n / \mathcal{B}_{n-1}$ are precisely the representatives for $\mathcal{B}_n/\mathcal{B}_{n-1}$ that are $(A_{n-1}, \emptyset)$-reduced, ie.~ do not begin with a generator of $\mathfrak{S}_n$. This is only $1$ and $us_1s_2\ldots s_{n-1} = u_n$. 

\end{proof}

Next we show that for $d=u_n$, the term $X^{B_{n-1}}_{A_{n-1}^d \cap \mathcal{B}_{n-1}}$ in Equation (\ref{mackey-sn}) is equal to $X^{A_{n-1}}_{A_{n-2}}$.

\begin{lemma}
Let $s_1,s_2,\ldots s_{n-1}$ be the generators in $A_{n-1}$, $u, s_1,\ldots , s_{n-2}$ the generators in $B_{n-1}$, and $u_n$ as above. In $\mathcal{B}_n$:
\[ \{s_1,s_2,\ldots, s_{n-1} \} ^{u_n} \cap \{ u, s_1,\ldots , s_{n-2} \} = \{ s_1, \ldots, s_{n-2}  \}\]
where $(-)^g$ denotes conjugation by $g$ with the positive power of $g$ appearing on the right.
\end{lemma}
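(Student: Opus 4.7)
The plan is to compute each conjugate $\{s_i\}^{u_n}$ (for $i = 1, \ldots, n-1$) explicitly as a signed permutation, and then read off which of them lie in the generating set $\{u, s_1, \ldots, s_{n-2}\}$ of $\mathcal{B}_{n-1}$. I shall work with the convention consistent with the Mackey decomposition of Proposition \ref{mackey} (positive power of $u_n$ on the right), so that $\{s_i\}^{u_n} = u_n^{-1} s_i u_n$.

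Using the signed permutation description from Section \ref{notation}, $u_n$ sends $k \mapsto k+1$ for $k = 1, \ldots, n-1$ and $n \mapsto -1$, hence $u_n^{-1}$ sends $k \mapsto k-1$ for $k = 2, \ldots, n$ and $-1 \mapsto n$ (equivalently $1 \mapsto -n$). Since conjugation sends a signed transposition to the signed transposition of its images, $\{s_i\}^{u_n}$ is the signed permutation swapping $u_n^{-1}(i)$ with $u_n^{-1}(i+1)$ and their negatives.

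For $i \in \{2, 3, \ldots, n-1\}$ the two image indices are $i-1$ and $i$, both in $\{1, \ldots, n-1\}$, so $\{s_i\}^{u_n} = (i-1, i) = s_{i-1}$, which is one of the generators $u, s_1, \ldots, s_{n-2}$ of $\mathcal{B}_{n-1}$; as $i$ ranges over $\{2, \ldots, n-1\}$ we obtain every element of $\{s_1, \ldots, s_{n-2}\}$. For $i = 1$ the image indices are $u_n^{-1}(1) = -n$ and $u_n^{-1}(2) = 1$, so $\{s_1\}^{u_n}$ is the signed transposition swapping $1 \leftrightarrow -n$ (and $-1 \leftrightarrow n$). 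Since every generator in $\{u, s_1, \ldots, s_{n-2}\}$ is supported on indices $< n$ and therefore fixes both $n$ and $-n$, $\{s_1\}^{u_n}$ cannot lie in this set. These two observations together yield the claimed intersection.

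There is no real obstacle in this argument: it reduces to reading off images under $u_n^{-1}$. The only subtlety is that the cyclic action of $u_n$ wraps from $n$ to $-1$, producing exactly one conjugate (corresponding to $i = 1$) which involves the letter $n$ and hence escapes $\mathcal{B}_{n-1}$; one has to keep track of which side the positive power of $u_n$ sits on so as to correctly identify this exceptional index as $i=1$ rather than $i = n-1$.
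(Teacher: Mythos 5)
Your proof is correct and takes a genuinely different, more concrete route than the paper's. Whereas the paper works entirely inside the Coxeter presentation of $\mathcal{B}_n$---manipulating reduced words with braid relations and cancellations to show $s_i^{u_n} = s_{i-1}$ for $i \geq 2$, then checking a single image to rule out $s_1^{u_n} = u$---you compute each conjugate directly via the signed permutation realization, using the fact that $g^{-1} (a\ b)\, g$ is the signed transposition of $g^{-1}(a)$ and $g^{-1}(b)$. The two approaches have the same case structure (generic $i$, plus the exceptional $i=1$), so they are parallel in shape but distinct in technique. Your route buys a few things: it is uniform (no separate style of argument for $i=1$), and it yields the full element $s_1^{u_n} = (1\ {-n})$ rather than just one image, which makes the exclusion from $\{u, s_1, \ldots, s_{n-2}\}$ immediate (that element moves $n$, while every generator of $\mathcal{B}_{n-1}$ fixes $n$). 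Two further points in your favour: you correctly flagged and resolved the notational discrepancy between the lemma statement ("positive $g$ on the left") and the Mackey convention actually needed (positive power on the right, $u_n^{-1} s_i u_n$), which is what the paper's own proof uses despite the statement's wording; and your calculation gives $s_1^{u_n}(1) = -n$, which is what a careful evaluation of the paper's expression also yields---the paper's stated value $-2$ appears to be a slip (the trailing factors $s_2 \cdots s_{n-1}$ were not applied), though it is harmless since both $-n$ and $-2$ are negative, which is all that is needed to conclude $s_1^{u_n} \neq u$.
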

\begin{proof}
Consider first $s_i$ with $i > 1$. $s_i$ commutes with $u$ and $s_j$ for $j<i-1$, and $u^2 = s_j^2 = 1$, so:
\begin{align*}
    s_i^{u_n} &= s_{n-1}\ldots s_2s_1u \cdot s_i \cdot us_1s_2\ldots s_{n-1} \\
    &= s_{n-1}\ldots s_is_{i-1} \cdot s_i \cdot s_{i-1}s_i\ldots s_{n-1} \\
    \intertext{By the braid relation:}
    &= s_{n-1}\ldots s_{i+1}s_i^2 \cdot s_{i-1} \cdot s_i^2s_{i+1}\ldots s_{n-1} \\
    &= s_{n-1}\ldots s_{i+1} \cdot s_{i-1} \cdot s_{i+1}\ldots s_{n-1} \\
    \intertext{$s_{i-1}$ commutes with all remaining terms:}
    &= s_{i-1}\cdot s_{n-1}\ldots s_{i+1}^2 \ldots s_{n-1} \\
    &= \ldots = s_{i-1}\cdot s_{n-1}^2 = s_{i-1}
\end{align*}
Thus $\{s_2,\ldots s_{n-1} \} ^{u_n} \cap \{ u, s_1,\ldots , s_{n-2} \} = \{ s_1, \ldots, s_{n-2} \}$. It remains only to show that $s_1^{u_n} \neq u$. But the image of $1$ under $s_1^{u_n} = s_{n-1}\ldots s_2s_1u \cdot s_1 \cdot us_1s_2\ldots s_{n-1}$ is $-n$, so it cannot be $u$.
\end{proof}

Now Equation (\ref{mackey-sn}) provides an inductive description of $X^{B_n}_{A_{n-1}}$.

\begin{prop}\label{vm-reduced}
Given $t \geq 0$, $n \geq m_1 >  m_2 >  \ldots >  m_t \geq 1$, define:
\begin{align*}
    v(\mathbf{m}) &= v(m_1,m_2,\ldots,m_t) = u_{m_1}u_{m_2}\ldots u_{m_t} \\
    &= us_1s_2\ldots s_{m_1 - 1}\cdot us_1s_2\ldots s_{m_2 - 1} \cdot \ldots \cdot us_1s_2\ldots s_{m_t - 1}
\end{align*}
so that $t=0$ forces the empty product $v(\mathbf{m})=1$. Then the elements $v(\mathbf{m})$ are the distinguished coset representatives for $\mathfrak{S}_n \backslash \mathcal{B}_n$:
\[ X^{B_n}_{A_{n-1}} = \{v(\mathbf{m}) :  t \geq 0, n \geq m_1 >  m_2 >  \ldots >  m_t \geq 1\} \]
\end{prop}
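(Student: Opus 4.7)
The plan is induction on $n$, feeding the two preceding lemmas directly into the Mackey decomposition (\ref{mackey-sn}). The base case $n = 1$ is immediate: $\mathcal{B}_1 = \{1, u\}$ and $\mathfrak{S}_1 = \{1\}$, so $X^{\mathcal{B}_1}_{\mathfrak{S}_1} = \{1, u_1\}$, which matches the two allowed tuples $t = 0$ and $(m_1) = (1)$.

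For the inductive step, Lemma \ref{sn-bn-b(n-1)} restricts the outer sum in (\ref{mackey-sn}) to $d \in \{1, u_n\}$. When $d = 1$, the intersection $\mathfrak{S}_n \cap \mathcal{B}_{n-1}$ of generating sets is $\{s_1, \ldots, s_{n-2}\}$, which generates $\mathfrak{S}_{n-1}$ as a parabolic subgroup of $\mathcal{B}_{n-1}$. When $d = u_n$, the preceding lemma identifies $\mathfrak{S}_n^{u_n} \cap \mathcal{B}_{n-1}$ with the same $\mathfrak{S}_{n-1}$. Hence
$$X^{\mathcal{B}_n}_{\mathfrak{S}_n} \;=\; X^{\mathcal{B}_{n-1}}_{\mathfrak{S}_{n-1}} \;\sqcup\; u_n \cdot X^{\mathcal{B}_{n-1}}_{\mathfrak{S}_{n-1}},$$
with the right-hand side viewed inside $\mathcal{B}_n$ via the standard embedding $\mathcal{B}_{n-1} \hookrightarrow \mathcal{B}_n$.

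By the inductive hypothesis, $X^{\mathcal{B}_{n-1}}_{\mathfrak{S}_{n-1}} = \{v(m_1, \ldots, m_t) : n-1 \geq m_1 > \cdots > m_t \geq 1\}$. The first piece of the disjoint union contributes the $v(\mathbf{m}) \in X^{\mathcal{B}_n}_{\mathfrak{S}_n}$ with $m_1 \leq n-1$, while the second contributes $u_n \cdot v(m_1, \ldots, m_t) = u_n u_{m_1}\cdots u_{m_t} = v(n, m_1, \ldots, m_t)$, namely those $v(\mathbf{m})$ with $m_1 = n$ (directly from the definition $v(\mathbf{m}) = u_{m_1} \cdots u_{m_t}$ and the constraint $n > m_1$). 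Their union is precisely $\{v(\mathbf{m}) : n \geq m_1 > \cdots > m_t \geq 1\}$, completing the induction. The substantive content is carried entirely by the two preceding lemmas, so no real obstacle remains; the proof is a formal combinatorial reassembly via Mackey, and the only care required is the notational matching between left multiplication by $u_n$ and prepending $n$ to the index tuple.
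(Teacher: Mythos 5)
Your proof is correct and follows essentially the same route as the paper's: induction on $n$, using the Mackey decomposition (Equation \ref{mackey-sn}) together with Lemma \ref{sn-bn-b(n-1)} and the conjugation lemma to obtain $X^{\mathcal{B}_n}_{\mathfrak{S}_n} = X^{\mathcal{B}_{n-1}}_{\mathfrak{S}_{n-1}} \sqcup u_n\cdot X^{\mathcal{B}_{n-1}}_{\mathfrak{S}_{n-1}}$. The only cosmetic differences are that you start the induction at $n=1$ rather than $n=2$ and spell out the trivial $d=1$ case of the intersection of generating sets, which the paper leaves implicit.
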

\begin{proof}
From Lemma \ref{sn-bn-b(n-1)}, $X_{A_{n-1} B_{n-1}} = \{ 1, u_n \}$. For $d = 1, u_n$, we have seen that $X^{B_{n-1}}_{A_{n-1}^d \cap B_{n-1}} = X^{B_{n-1}}_{A_{n-2}}$. Thus by Equation (\ref{mackey-sn}) (the Mackey decomposition):
\[ X_{A_{n-1}}^{B_n} = X^{B_{n-1}}_{A_{n-2}} \ \bigcup \ u_n \cdot X^{B_{n-1}}_{A_{n-2}} \]
By induction, we may assume the result for $n-1$ (in the base case of $n=1$, we have $\mathcal{B}_n= \{1,u\}$ and $\mathcal{A}_n = \{1\}$, so there is a single coset representative $u$, which agrees with the statement). Then:
\begin{align*}
    X_{A_{n-1}}^{B_n} &= \{v(\mathbf{m}) :  n-1 \geq m_1 >  \ldots >  m_t \geq 1\} \ \cup \ u_n \{v(\mathbf{m}) : n-1 \geq m_1 >  \ldots >  m_t \geq 1\} \\
    &= \{v(\mathbf{m}) :  n \geq m_1 >  \ldots >  m_t \geq 1\} \qedhere
\end{align*} 
\end{proof}
\begin{remark}
In this formulation, the distinguished coset representatives have been parameterised by decreasing sequences $m_1 >  m_2 >  \ldots >  m_t$ in $[n]$. There are $2^n$ such sequences, as expected: $|\mathfrak{S_n} \backslash \mathcal{B}_n| = 2^n n! / n! = 2^n$.
\end{remark}

The $v(\mathbf{m})$ defined above play a central role in the remainder of this proof. It is not hard to observe that the signed injective word of length $n$ corresponding to $v(\mathbf{m})$ has negative letters at precisely the places $m_i$ when counted from the left, so the algebraic analogue of $F_p$ will be based on the value of $m_1$, the leftmost negative.

We first complete our description of the distinguished representatives for $\mathfrak{S}_n \backslash \mathcal{B}_n / \mathcal{B}_{n-r-1}$, as discussed earlier.

\begin{prop}\label{reps-sn-bn-bnr-1}
The distinguished representatives $X_{A_{n-1}B_{n-r-1}}$ for $\mathfrak{S}_n \backslash \mathcal{B}_n / \mathcal{B}_{n-r-1}$ are given by 
$v(\mathbf{m}) = v(m_1,\ldots  ,m_t)$ for $n \geq m_1 >  \ldots  >  m_t \geq n-r$. Equivalently, $v(\mathbf{m} + n - r - 1)$ for $r+1 \geq m_1 >  m_2 >  \ldots m_t \geq 1$.
\end{prop}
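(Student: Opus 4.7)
The plan is to use the characterisation $X_{\mathfrak{S}_n \mathcal{B}_{n-r-1}} = X^{\mathcal{B}_n}_{\mathfrak{S}_n} \cap (X^{\mathcal{B}_n}_{\mathcal{B}_{n-r-1}})^{-1}$ together with Proposition \ref{vm-reduced}, which identifies $X^{\mathcal{B}_n}_{\mathfrak{S}_n}$ with the full family $\{v(\mathbf{m}) : n \geq m_1 > \cdots > m_t \geq 1\}$. Each such $v(\mathbf{m})$ is already $(\mathfrak{S}_n, \emptyset)$-reduced, so it only remains to determine which are also $(\emptyset, \mathcal{B}_{n-r-1})$-reduced---that is, which have no reduced expression ending in one of the generators $u, s_1, \ldots, s_{n-r-2}$ of $\mathcal{B}_{n-r-1}$. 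I claim this is precisely the condition $m_t \geq n-r$ (and vacuously the empty sequence), after which the equivalent form in the proposition follows from the substitution $m_i \mapsto m_i - (n-r-1)$.

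The ``only if'' direction is immediate. If $m_t \leq n-r-1$, then the canonical reduced expression $v(\mathbf{m}) = u_{m_1} \cdots u_{m_{t-1}} \cdot u s_1 \cdots s_{m_t - 1}$ ends in $s_{m_t - 1}$ (or in $u$ when $m_t = 1$), which is a generator of $\mathcal{B}_{n-r-1}$; hence $v(\mathbf{m})$ is not $(\emptyset, \mathcal{B}_{n-r-1})$-reduced.

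For the converse I would invoke the standard length criteria for signed permutations: $\ell(w s_i) > \ell(w)$ iff $w(i) < w(i+1)$, and $\ell(w u) > \ell(w)$ iff $w(1) > 0$. Being $(\emptyset, \mathcal{B}_{n-r-1})$-reduced is therefore equivalent to the requirements $v(\mathbf{m})(1) > 0$ and $v(\mathbf{m})(i) < v(\mathbf{m})(i+1)$ for $1 \leq i \leq n-r-2$. The main obstacle, and the heart of the proof, is the explicit evaluation $v(\mathbf{m})(k) = k + t$ for every $k \in \{1, 2, \ldots, n-r-1\}$. I would prove this by induction on the factors of $v(\mathbf{m}) = u_{m_1} u_{m_2} \cdots u_{m_t}$ applied right-to-left: at stage $j$ the running value is $k + j - 1$, and $u_{m_{t-j+1}}$ increments it to $k+j$ provided $k + j - 1 < m_{t-j+1}$. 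The strict inequalities $m_1 > m_2 > \cdots > m_t \geq n-r$ force $m_{t-j+1} \geq n-r+j-1$, which together with $k \leq n-r-1$ yields $k + j - 1 \leq n-r+j-2 < m_{t-j+1}$, supplying the required strict inequality at every stage. Once $v(\mathbf{m})(k) = k + t$ is in hand, both length inequalities are automatic, and the characterisation of $X_{\mathfrak{S}_n \mathcal{B}_{n-r-1}}$ is complete.
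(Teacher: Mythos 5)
Your argument is correct, but it takes a genuinely different route from the paper's. The paper disposes of the forward direction by asserting that when $m_t \geq n-r$ the reduced word $u_{m_1}\cdots u_{m_t}$ "admits no M-moves that send an element of $\mathcal{B}_{n-r-1}$ to the right end," which is a statement about the combinatorics of reduced expressions left partly to the reader. You instead compute $v(\mathbf{m})$ explicitly as a signed permutation and invoke the standard descent criteria for type $\mathcal{B}$ (that $s_i$, $i\geq 1$, is a right descent of $w$ iff $w(i) > w(i+1)$, and that $u = s_0$ is a right descent iff $w(1) < 0$). Your key computation $v(\mathbf{m})(k) = k + t$ for $1 \leq k \leq n-r-1$ is correct: with the right-to-left composition the paper uses, at stage $j$ one has $k + j - 1 \leq (n-r-1) + (j-1) < n-r+j-1 \leq m_{t-j+1}$, so each $u_{m_{t-j+1}}$ shifts the running value up by one, yielding the claimed formula; both descent inequalities for $\mathcal{B}_{n-r-1}$ then follow immediately, and the "only if" direction is the same observation the paper makes (the canonical reduced word ends in $s_{m_t-1}$ or $u$, which lies in $\mathcal{B}_{n-r-1}$ when $m_t < n-r$). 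The tradeoff: your version is more explicit and self-verifying than the paper's M-move assertion, but it imports the one-line-notation length criteria for signed permutations, which the paper does not state and which lie slightly outside its established toolkit of Matsumoto's theorem and the Mackey decomposition.
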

\begin{proof}
These distinguished representatives for $\mathfrak{S}_n \backslash \mathcal{B}_n / \mathcal{B}_{n-r-1}$ are all $(A_{n-1}, \emptyset)$-reduced, so they form a subset of $X^{B_n}_{A_{n-1}}$. By Proposition \ref{vm-reduced}, the elements of $X_{A_{n-1}B_{n-r-1}}$ are then of the form $v(\mathbf{m})$ for certain $\mathbf{m}$. If $m_t < n-r$, then $v(\mathbf{m})$ ends with an element lying in $\mathcal{B}_{n-r-1}$, so is not $(A_{n-1}, B_{n-r-1})$-reduced. Hence $X_{A_{n-1}B_{n-r-1}}$ is contained in the set of $v(\mathbf{m})$ with $m_t \geq n-r$.

To see that it is in fact all such $v(\mathbf{m})$, note that $|\mathfrak{S}_n \backslash \mathcal{B}_n / \mathcal{B}_{n-r-1}|$ is equal to the number of orbits of the action of $\mathfrak{S}_n$ on the signed injective words of length $r+1$, by the earlier discussion equating signed injective words with cosets of $\mathcal{B}_{n-r-1}$. There are $2^{r+1}$ orbits (one for each pattern of signs). The number of sequences $\mathbf{m}$ with  $n \geq m_1 >  \ldots  >  m_t \geq n-r$ is $2^{r+1}$, so each $v(\mathbf{m})$ with $m_t\geq n-r$ must be a representative of a double coset (in particular it is $(A_{n-1}, B_{n-r-1})$-reduced).
\end{proof}

Each representative of $\mathfrak{S}_n \backslash \mathcal{B}_n / \mathcal{B}_{n-r-1}$ corresponds to a certain pattern of signs for injective words of length $r+1$. This proposition says that the natural choice of representative for words of length $r+1$ with negatives at the places $m_i$ from the left is $v(\mathbf{m} + n - r - 1)$.

Recall again that the algebraic analogue of signed injective words of length $r+1$ is given by $R\mathcal{B}_n\otimes_{R\mathcal{B}_{n-r-1}} \mathbb{1} \cong R[\mathcal{B}_n/\mathcal{B}_{n-r-1}]$. The filtration of $\mathcal{C}^\pm(n)_r$ was given by considering the orbits under the action of $\mathfrak{S}_n$; it is thus natural to try to understand how $R[\mathcal{B}_n/\mathcal{B}_{n-r-1}]$ splits under the action of $R\mathfrak{S}_n$. Again, the Mackey decomposition (Proposition \ref{mackey}) is used.

\begin{lemma}\label{right-cosets-bnr1}
The right cosets of $\mathcal{B}_{n-r-1}$ in $\mathcal{B}_n$ have distinguished representatives given by
\[(X_{B_{n-r-1}})^{-1} = \bigsqcup_{n \geq m_1 >  m_2 > \ldots >  m_t \geq n-r} (X^{A_{n-1}}_{\{s_{1+t},\ldots ,s_{n-r-2+t}\}})^{-1}v(m_1,\ldots  ,m_t)\]
\end{lemma}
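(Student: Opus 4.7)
The plan is to apply the inverse Mackey decomposition
\[X_J^{-1} = \bigsqcup_{d \in X_{KJ}} (X^K_{K \cap {}^d J})^{-1} \cdot d\]
with $K = \{s_1, \ldots, s_{n-1}\}$ (the generators of $\mathfrak{S}_n$) and $J = \{u, s_1, \ldots, s_{n-r-2}\}$ (the generators of $\mathcal{B}_{n-r-1}$). Proposition \ref{reps-sn-bn-bnr-1} already identifies $X_{\mathfrak{S}_n \mathcal{B}_{n-r-1}}$ as $\{ v(\mathbf{m}) : n \geq m_1 > \ldots > m_t \geq n-r \}$, so the only remaining task is to compute the intersection of generating sets $\mathfrak{S}_n \cap {}^{v(\mathbf{m})} \mathcal{B}_{n-r-1}$ for each such $\mathbf{m}$ and verify it equals $\{s_{1+t}, \ldots, s_{n-r-2+t}\}$.

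For this, I would first establish that, as a signed permutation, $v(\mathbf{m})$ acts on $\{1, 2, \ldots, n-r-1\}$ by $i \mapsto i + t$. Since the condition $m_t \geq n-r$ guarantees that none of $m_1, \ldots, m_t$ lies in $\{1, \ldots, n-r-1\}$, one unpacks $v(\mathbf{m}) = u_{m_1} u_{m_2} \cdots u_{m_t}$ and uses the cycle description of each $u_{m_j}$ from Section \ref{notation}: because $u_m$ acts as $k \mapsto k+1$ for $1 \leq k < m$, every letter $i \in \{1, \ldots, n-r-1\}$ is incremented once by each $u_{m_j}$ (a short induction on $t$, noting $i + (t-j) < m_j$ at every stage), for a total shift of $t$. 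Using this, for $1 \leq i \leq n-r-2$ the conjugate ${}^{v(\mathbf{m})} s_i = v(\mathbf{m}) s_i v(\mathbf{m})^{-1}$ is the positive transposition swapping $i+t$ and $i+1+t$; since $t \leq r+1$ we have $i+t \leq n-1$, so this equals $s_{i+t} \in \mathfrak{S}_n$. On the other hand, ${}^{v(\mathbf{m})} u$ is the sign-flip $(1+t, -(1+t))$, which is never in $\mathfrak{S}_n$. Together these show the intersection is exactly $\{s_{1+t}, \ldots, s_{n-r-2+t}\}$, and substituting into the inverse Mackey formula yields the claimed decomposition.

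The main obstacle is the signed-permutation calculation in the second step, in particular tracking how $v(\mathbf{m})$ acts on the first $n-r-1$ letters; this is manageable because those letters are disjoint from $\{m_1, \ldots, m_t\}$, where the ``interesting'' action (sending $m_j$ to $-j$) takes place. The calculation in the proof following Lemma \ref{sn-bn-b(n-1)} showing $s_i^{u_n} = s_{i-1}$ provides a close template, and one can alternatively argue purely algebraically by induction on $t$ using the factorisation $v(m_1, \ldots, m_t) = v(m_1, \ldots, m_{t-1}) \cdot u_{m_t}$ together with repeated application of the braid relations in $\mathcal{B}_n$.
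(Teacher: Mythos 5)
Your proposal is correct and follows essentially the same route as the paper: invoke the inverse Mackey decomposition with $K$ the generating set of $\mathfrak{S}_n$ and $J$ that of $\mathcal{B}_{n-r-1}$, feed in Proposition~\ref{reps-sn-bn-bnr-1} for the double coset representatives, and reduce to identifying $K \cap {}^{v(\mathbf{m})} J$. The paper computes ${}^{v(\mathbf{m})} s_k = s_{k+t}$ by pushing $s_k$ through each factor $u_{m_j}$ with the braid relations (exactly the algebraic alternative you flag at the end), whereas you deduce it from the signed-permutation action $i \mapsto i+t$ on $\{1, \ldots, n-r-1\}$; both are careful about the same inequality $i + (t-j) < m_j$, so the difference is purely one of presentation.
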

\begin{proof}
By Proposition \ref{reps-sn-bn-bnr-1}, $X_{A_{n-1}B_{n-r-1}}$ is $\{v(\mathbf{m}) : n \geq m_1 >  m_2 >  \ldots  >  m_t \geq n-r\}$. The Mackey decomposition states that:
\[(X_{B_{n-r-1}})^{-1} = \bigsqcup_{d \in X_{A_{n-1}B_{n-r-1}}} (X^{A_{n-1}}_{A_{n-1} \cap {}^d B_{n-r-1} })^{-1}d\]
It only remains to calculate $(X^{A_{n-1}}_{A_{n-1} \cap {}^d B_{n-r-1} })^{-1}$. Given $s_k \in B_{n-r-1}$ ($k < n-r-1$), and $u_{m_t}$ with $m_t \geq n-r$, we can compute
\[ u_{m_t} s_k = (u s_1\ldots s_{m_t - 1} )s_k = s_{k+1}(u s_1\ldots s_{m_t - 1} ) = s_{k+1} u_{m_t}\]
and thus $u_{m_t}s_ku_{m_t}^{-1} = s_{k+1}$. Continuing in this fashion, after $l$ steps we have $(u_{m_{t-l+1}}\ldots  u_m)s_k(u_{m_{t-l+1}}\ldots  u_m)^{-1} = s_{k+l}$ (and $m_{t-l} \geq n-r+l  > k+l+1$, so the computation in each step is the same). Thus for $d=v(m_1,m_2,\ldots,m_t)$, $^ds_k = s_{k+t}$. As a signed permutation, $^du$ sends $t+1$ to $-(t+1)$, so is not $s_j$ for any $j$. Hence we have that $X^{A_{n-1}}_{A_{n-1} \cap {}^d B_{n-r-1} } = X^{A_{n-1}}_{\{s_{1+t},\ldots ,s_{n-r-2+t}\}}$, as required.
\end{proof}

As discussed earlier, the element $v(\mathbf{m})$ corresponds to a word with negative terms at the places $m_i$ from the left (and is furthermore the most natural choice of representative for this sign pattern from the Coxeter group point of view). Given a signed injective word of length $r$ with sign pattern $v(\mathbf{m})$, we may then pick some $\sigma \in \mathfrak{S}_n$ such that the coset of $\sigma v(\mathbf{m})$ in $\mathcal{B}_n/\mathcal{B}_{n-r-1}$ corresponds to this signed injective word. The above lemma makes precise to what extent this choice of $\sigma$ is unique, and provides natural choices for $\sigma$ in each case (a distinguished coset representative in $(X^{A_{n-1}}_{\{s_{1+t},\ldots ,s_{n-r-2+t}\}})^{-1}$).

Finally, we wish to extend this description to the complex $\mathcal{D}^\pm(n)$. The previous lemma, along with the basis theorem (Theorem \ref{basis-theorem}) for Iwahori-Hecke algebras, provides a convenient basis from which we will define our filtration.

\begin{prop}\label{Dn-decomp}
$(\mathcal{D}^\pm(n))_r = \mathcal{HB}_n \otimes_{\mathcal{HB}_{n-r-1}}\mathbb{1}$ is a free $R$ module with basis
\[\bigsqcup_{t\geq 0} \quad \bigsqcup_{n \geq m_1 > m_2 >\ldots > m_t \geq n-r}\{T_xV(\mathbf{m}) \otimes 1 : x \in (X^{A_{n-1}}_{\{s_{1+t},\ldots ,s_{n-r-2+t}\}})^{-1}\}\]
where 
\[V(\mathbf{m}) = T_{v(\mathbf{m})} = UT_1\ldots T_{m_1-1}\ldots  UT_1\ldots T_{m_t-1}\]
\end{prop}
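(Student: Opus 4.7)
The plan is to combine Proposition \ref{Hw-free-Hj} with the coset description in Lemma \ref{right-cosets-bnr1}. Proposition \ref{Hw-free-Hj}, applied to the inclusion $\mathcal{HB}_{n-r-1} \hookrightarrow \mathcal{HB}_n$, already yields that $\mathcal{D}^\pm(n)_r = \mathcal{HB}_n \otimes_{\mathcal{HB}_{n-r-1}} \mathbb{1}$ is a free $R$-module with basis $\{ T_w \otimes 1 : w \in (X_{\mathcal{B}_{n-r-1}})^{-1} \}$. Substituting the partition of $(X_{\mathcal{B}_{n-r-1}})^{-1}$ from Lemma \ref{right-cosets-bnr1} immediately reindexes this basis by pairs $(x, \mathbf{m})$, where $n \geq m_1 > \cdots > m_t \geq n-r$ and $x \in (X^{\mathfrak{S}_n}_{\{s_{1+t},\ldots,s_{n-r-2+t}\}})^{-1}$, with basis elements $T_{x \cdot v(\mathbf{m})} \otimes 1$.

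The only remaining content is to identify $T_{x \cdot v(\mathbf{m})}$ with $T_x \cdot V(\mathbf{m})$. By Proposition \ref{matso-ih}, this identity holds once we know that the concatenation of a reduced expression for $x$ with one for $v(\mathbf{m})$ is itself reduced, i.e.\ $l(x v(\mathbf{m})) = l(x) + l(v(\mathbf{m}))$. This is exactly the length-additivity built into the Mackey decomposition (Proposition \ref{mackey}): the unique factorisation $w = v \cdot d$ of $w \in X_J^{-1}$ with $d \in X_{KJ}$ and $v \in (X^K_{K \cap {}^dJ})^{-1}$ satisfies $l(w) = l(v) + l(d)$. Applying this with $J = \mathcal{B}_{n-r-1}$ and $K = \mathfrak{S}_n$, and recognising $v(\mathbf{m})$ as the double coset representative in $X_{\mathfrak{S}_n \mathcal{B}_{n-r-1}}$ by Proposition \ref{reps-sn-bn-bnr-1}, gives $l(x v(\mathbf{m})) = l(x) + l(v(\mathbf{m}))$. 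Combined with the reduced factorisation $v(\mathbf{m}) = u_{m_1} \cdots u_{m_t}$ of Proposition \ref{vm-reduced}, this yields $V(\mathbf{m}) = T_{v(\mathbf{m})} = U_{m_1} \cdots U_{m_t}$ as stated.

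The argument is essentially bookkeeping. The main obstacle, if any, is simply to confirm that the length-additivity above really is part of the Mackey decomposition as cited from \cite{geck}. Should one wish to avoid that appeal, one can verify length-additivity directly: $v(\mathbf{m})$ is $(\mathfrak{S}_n, \mathcal{B}_{n-r-1})$-reduced by Proposition \ref{reps-sn-bn-bnr-1}, so no sequence of M-moves on a reduced expression for it can produce a generator of $\mathfrak{S}_n$ at its left end, while $x^{-1}$ lies in $X^{\mathfrak{S}_n}_{\mathfrak{S}_n \cap {}^{v(\mathbf{m})}\mathcal{B}_{n-r-1}}$ and so the factor $x$ admits no reduced expression ending in a letter that could cancel into $v(\mathbf{m})$; the word problem (Theorem \ref{word-problem}) then precludes any cancellation between the two factors, giving the required length-additivity.
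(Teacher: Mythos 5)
Your proof takes essentially the same route as the paper: apply Proposition \ref{Hw-free-Hj} to get the basis $\{T_w \otimes 1 : w \in (X_{\mathcal{B}_{n-r-1}})^{-1}\}$, reindex via the Mackey partition in Lemma \ref{right-cosets-bnr1}, and invoke Proposition \ref{matso-ih} to identify $T_{x\cdot v(\mathbf{m})}$ with $T_x V(\mathbf{m})$. Your explicit verification of the length-additivity $l(xv(\mathbf{m})) = l(x) + l(v(\mathbf{m}))$ (via the Mackey decomposition, or more simply via the fact that $v(\mathbf{m}) \in X_{\mathfrak{S}_n}$ is the minimal-length element of its left $\mathfrak{S}_n$-coset) fills a step that the paper silently folds into its citation of Matsumoto, and is exactly the right thing to check before applying Proposition \ref{matso-ih}.
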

\begin{proof}
The $v(\mathbf{m})$ are reduced by Proposition \ref{vm-reduced}, so equality in the second statement holds by Proposition \ref{matso-ih}. For the first statement, by Proposition \ref{Hw-free-Hj}, $(\mathcal{D}^\pm(n))_r$ is free with basis $\{T_x \otimes 1: x \in (X_{B_{n-r-1}})^{-1}\}$. By Lemma \ref{right-cosets-bnr1}, this is precisely the set of $T_{x\cdot v(\mathbf{m})}\otimes 1 =T_x V(\mathbf{m})\otimes 1$ (again by Proposition \ref{matso-ih}), for the $x \in (X^{A_{n-1}}_{\{s_{1+t},\ldots ,s_{n-r-2+t}\}})^{-1}$.
\end{proof}

\section{The algebraic filtration}\label{Fp-section}
In this section, we construct the filtration of $\mathcal{D}^\pm(n)$. This filtration will correspond to the filtration of $\mathcal{C}^\pm(n)$ by position of the negative letters. We have seen the most natural representatives for signed injective words with a certain pattern of signs are the elements $v(\mathbf{m})$, and that the corresponding elements of $\mathcal{HB}_n$ are $V(\mathbf{m})$. We thus define a filtration of $\mathcal{D}^\pm(n)$ in terms of the $\mathcal{H}_n$ span of certain $V(\mathbf{m}) \otimes 1$(recall that $\mathcal{H}_n \subseteq \mathcal{HB}_n$ is the Iwahori-Hecke algebra of type $A_{n-1} \cong \mathfrak{S}_n$).

Earlier we saw that the natural representative (for the $\mathfrak{S}_n$ action) of a word of length $r+1$ with negatives at places $m_1,m_2,\ldots,m_t$ (counted from the left) is $v(\mathbf{m} + n - r - 1)$; a word thus has all negatives in the first $p$ places if and only if it is in the $\mathfrak{S}_n$ orbit of $v(\mathbf{m} + n - r - 1)$ for some $\mathbf{m}$ with $m_1 \leq p$.

\begin{definition}
Let $1\leq p \leq n$. Define $F_p$ to be the subcomplex given in degree $r$ by the $\mathcal{H}_n$ span of the elements $V(\mathbf{m} + n  - r - 1 ) \otimes 1$ where $m_1 \leq \min \{r+1, p \}$:
\[ (F_p)_r = \mathcal{H}_n \cdot \{ V(\mathbf{m} + n  - r - 1 ) \otimes 1 : \min \{r+1, p \} \geq m_1 > m_2 > \ldots > m_t \geq 1 \} \]
\end{definition}

\noindent so that $F_0$ is generated over $\mathcal{H}_n$ by $1\otimes1$. $(F_p)_r$ is evidently a $\mathcal{H}_n$-submodule, and it will soon be shown that $F_p$ is a $\mathcal{H}_n$-subcomplex.

Before we show that $F_p$ is indeed a filtration, we identify the first level $F_0$---this showcases further analogy with the filtration of the complex of signed injective words. In the filtration of $\mathcal{C}^\pm(n)$ the first level of the filtration was isomorphic to a complex (the complex of injective words, $\mathcal{C}(n)$) known to be highly acyclic. The analogue of this for Iwahori-Hecke algebras is the complex $\mathcal{D}(n)$ introduced in \cite{Hepworth2020} (Definition 6.3). There it is shown that:

\begin{theorem}[\cite{Hepworth2020}]
$H_d(\mathcal{D}(n)) = 0$ for $d \leq n-2$.
\end{theorem}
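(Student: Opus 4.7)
The plan is to prove this by induction on $n$, via a filtration argument that mirrors the one developed for $\mathcal{D}^\pm(n)$ in this paper (Sections \ref{Fp-section}--\ref{Fp-quotients-section}), but stripped of the signed ingredient contributed by $u$. The base cases $n = 0, 1$ are immediate from the definition of $\mathcal{D}(n)$ in low degrees.

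First, I would use the Mackey decomposition (Proposition \ref{mackey}) for $\mathfrak{S}_{n-r-1} \subseteq \mathfrak{S}_n$, routed through the intermediate parabolic $\mathfrak{S}_{n-1}$, to obtain distinguished right coset representatives of $\mathfrak{S}_{n-r-1}$ in $\mathfrak{S}_n$ of the form $x \cdot s_{k,1}$, where $s_{k,1} = s_{k-1} s_{k-2} \cdots s_1$ is the downward cycle placing the letter $k$ at the leftmost position (so that $k$ records the position of that letter in the corresponding injective word), and $x$ lies in a suitable distinguished coset set of $\mathfrak{S}_{n-1}$. The basis theorem (Theorem \ref{basis-theorem}) then lifts this to an $R$-basis of $\mathcal{D}(n)_r = \mathcal{H}_n \otimes_{\mathcal{H}_{n-r-1}} \mathbb{1}$ consisting of elements $T_x T_{k,1} \otimes 1$. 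I then define a filtration $F_0 \subseteq F_1 \subseteq \cdots \subseteq F_n = \mathcal{D}(n)$ of subcomplexes, with $F_p$ spanned in degree $r$ by the basis elements having $k \leq \min\{r+1, p\}$.

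Second, I would identify the filtration quotient $F_p/F_{p-1}$ as a shifted, induced copy of $\mathcal{D}(n-1)$: the goal is an isomorphism of the form $\mathcal{H}_n \otimes_{\mathcal{H}_{n-1}} \Sigma \mathcal{D}(n-1) \cong F_p/F_{p-1}$. Because $\mathcal{H}_n$ is free as a right $\mathcal{H}_{n-1}$-module (Proposition \ref{Hw-free-Hj}), induction is an exact functor, and so $H_d(F_p/F_{p-1}) \cong \mathcal{H}_n \otimes_{\mathcal{H}_{n-1}} H_{d-1}(\mathcal{D}(n-1))$, which vanishes for $d - 1 \leq n - 3$ by the inductive hypothesis. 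The long exact sequences of the filtration then propagate vanishing from $F_0$ (whose higher homology is trivial by direct inspection) up to $\mathcal{D}(n) = F_n$, giving the desired conclusion.

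The main obstacle is the identification step: showing that the differential of $\mathcal{D}(n)$ descends to $F_p/F_{p-1}$ as the differential of the induced complex $\mathcal{H}_n \otimes_{\mathcal{H}_{n-1}} \Sigma \mathcal{D}(n-1)$, including all $q$-weights. For each face map $x \otimes 1 \mapsto x \cdot T_{n-r+j, n-r} \otimes 1$ applied to a basis element $T_x T_{k,1} \otimes 1$, one must commute $T_{n-r+j, n-r}$ past $T_{k,1}$ using the braid and quadratic relations of $\mathcal{H}_n$, determine which face maps land in $F_{p-1}$ (and hence contribute nothing to the quotient), and verify that the remaining face maps assemble into the differential of $\Sigma \mathcal{D}(n-1)$ with exactly the right $q$-weights. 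This is the unsigned version of the technical Hecke-algebraic bookkeeping that the paper's Section \ref{Fp-quotients-section} carries out, and the $q$-weights in $\partial^r$ are designed precisely so that this calculation closes up.
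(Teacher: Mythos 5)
This theorem is not proved in the paper you are reading: it is cited verbatim from \cite{Hepworth2020} (it is Hepworth's Theorem on the high acyclicity of $\mathcal{D}(n)$), and the present paper uses it as a black box at the bottom of the filtration. So there is no ``paper's own proof'' here to compare against, and your proposal should be judged on its own merits and against Hepworth's argument. On that basis, there is a genuine gap in the central identification step.

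The claimed quotient $F_p/F_{p-1} \cong \mathcal{H}_n \otimes_{\mathcal{H}_{n-1}} \Sigma\,\mathcal{D}(n-1)$ cannot be correct, for a simple rank count. In degree $r \geq 0$ one has
\[
\mathcal{H}_n \otimes_{\mathcal{H}_{n-1}} \bigl(\Sigma\,\mathcal{D}(n-1)\bigr)_r \;=\; \mathcal{H}_n \otimes_{\mathcal{H}_{n-1}} \mathcal{H}_{n-1} \otimes_{\mathcal{H}_{n-r-1}} \mathbb{1} \;\cong\; \mathcal{H}_n \otimes_{\mathcal{H}_{n-r-1}} \mathbb{1} \;=\; \mathcal{D}(n)_r,
\]
which is free of rank $n!/(n-r-1)!$ over $R$. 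That is already the rank of the \emph{whole} of $\mathcal{D}(n)_r$, so a single graded piece of an $n$-step filtration of $\mathcal{D}(n)$ cannot have this size. A correct identification, if it exists, must drop the $\mathcal{H}_n \otimes_{\mathcal{H}_{n-1}}$ factor, and must also allow the suspension degree and the ``smaller complex'' to depend on the filtration index $p$, exactly as happens in this paper's own Lemma \ref{C-quotient-lemma} and Proposition \ref{binomial-decomp}, where one sees $\Sigma^p$ of a complex on $n-p$ letters, not $\Sigma^1$ of a complex on $n-1$ letters. Concretely, if one filters by the position of a distinguished letter (say $n$), then in the quotient only the face maps $\partial_j$ with $j > p$ survive, so the quotient carries $r-p$ face maps in degree $r$; but $\Sigma\,\mathcal{D}(n-1)$ carries $r$ of them there. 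These counts only agree when $p=0$, so your ansatz identifies the wrong complex for every $p\ge 1$.

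Two further points deserve attention. First, your induction takes $F_0$ to be ``trivially'' acyclic, but if the filtration is by position of a fixed letter, $F_0$ consists of all words omitting that letter together with those beginning with it; this is the mapping cone of an isomorphism onto $\mathcal{D}(n-1)$, which is acyclic, but this is a step that needs proving rather than inspection. Second, and more fundamentally, a filtration of $\mathcal{D}(n)$ of this type cannot be $\mathcal{H}_n$-equivariant: the combinatorial filtration by position of the letter $n$ is preserved only by the parabolic $\mathcal{H}_{n-1}$, and one must then do all the Hecke-algebra bookkeeping (the analogue of Lemma \ref{v-facemap-calc} and Proposition \ref{fp-is-filtration}) over $\mathcal{H}_{n-1}$, not $\mathcal{H}_n$. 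Hepworth's actual proof in \cite{Hepworth2020} is a substantially more delicate chain-level argument; the present paper's filtration machinery (Sections \ref{coset-reps}--\ref{Fp-quotients-section}) was designed for $\mathcal{HB}_n$ precisely because $\mathcal{HB}_n$ is a free right $\mathcal{H}_n$-module of rank $2^n$, giving the ``room'' for the $\mathcal{H}_n$-module filtration by sign pattern; $\mathcal{H}_n$ over $\mathcal{H}_{n-1}$ does not afford the same structure, and the argument does not port over in the way you hope.
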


Accordingly, we now identify $F_0$ with $\mathcal{D}(n)$ and thus deduce that it is highly acyclic.

\begin{prop}
$F_0$ is a subcomplex isomorphic as a chain complex of $\mathcal{H}_n$ modules to the complex $\mathcal{D}(n)$. In particular, $H_d(F_0) = 0$ for $d \leq n-2$.
\end{prop}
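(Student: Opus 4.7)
The plan is to identify $F_0$ with $\mathcal{D}(n)$ via the map induced by the inclusion of algebras $\mathcal{H}_n \hookrightarrow \mathcal{HB}_n$. In degree $r$, define
\[
\phi_r \colon \mathcal{H}_n \otimes_{\mathcal{H}_{n-r-1}} \mathbb{1} \longrightarrow \mathcal{HB}_n \otimes_{\mathcal{HB}_{n-r-1}} \mathbb{1}, \qquad x \otimes 1 \longmapsto x \otimes 1.
\]
This is well-defined and $\mathcal{H}_n$-linear, since the inclusion $\mathcal{H}_{n-r-1} \hookrightarrow \mathcal{HB}_{n-r-1}$ respects the trivial module structure (the generators $T_i$ act by $q$ in both). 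By definition, $(F_0)_r$ is the $\mathcal{H}_n$-span of $1 \otimes 1$ (the only admissible $\mathbf{m}$ in the definition of $F_0$ is the empty sequence, for which $V(\mathbf{m}) = 1$), so the image of $\phi_r$ is exactly $(F_0)_r$.

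Next I would show that $\phi_r$ is injective, and hence an isomorphism onto $(F_0)_r$. By Proposition \ref{Hw-free-Hj} applied to the parabolic pair $\mathfrak{S}_{n-r-1} \subseteq \mathfrak{S}_n$, the domain is free as an $R$-module on $\{T_x \otimes 1 : x \in (X^{\mathfrak{S}_n}_{\mathfrak{S}_{n-r-1}})^{-1}\}$, where $\mathfrak{S}_{n-r-1}$ is the parabolic generated by $\{s_1,\ldots,s_{n-r-2}\}$. Under $\phi_r$ these map to the $t = 0$ (empty $\mathbf{m}$) slice of the basis in Proposition \ref{Dn-decomp}, which is an $R$-linearly independent subset of $\mathcal{D}^\pm(n)_r$. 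Hence $\phi_r$ is bijective onto $(F_0)_r$.

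Finally I would check that $\phi = \{\phi_r\}$ is a chain map. The face maps in both $\mathcal{D}(n)$ and $\mathcal{D}^\pm(n)$ are given by the identical formula $\partial^r_j(x \otimes 1) = x \, T_{n-r+j,\, n-r} \otimes 1$, and the element $T_{n-r+j,\, n-r}$ lies in $\mathcal{H}_n$, so the formula respects the inclusion; the signed $q^{-j}$-weighted sum is identical too. This simultaneously shows that $F_0$ is closed under $\partial$ (so is a genuine subcomplex) and that $\phi$ is an isomorphism of chain complexes of $\mathcal{H}_n$-modules. Vanishing of $H_d(F_0)$ for $d \leq n-2$ then follows immediately from the cited theorem of \cite{Hepworth2020} on $\mathcal{D}(n)$. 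There is no real obstacle here—the argument is essentially bookkeeping, matching the bases produced by Propositions \ref{Hw-free-Hj} and \ref{Dn-decomp} under the natural inclusion.
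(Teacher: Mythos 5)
Your proof is correct and follows essentially the same route as the paper's: both identify $F_0$ with $\mathcal{D}(n)$ via the natural map induced by the inclusion $\mathcal{H}_n \hookrightarrow \mathcal{HB}_n$, comparing the free $R$-bases supplied by Propositions \ref{Hw-free-Hj} and \ref{Dn-decomp} and observing that the face-map formulas coincide because $T_{n-r+j,\,n-r}$ lies in $\mathcal{H}_n$. The only difference is organizational: you define the map \emph{from} $\mathcal{D}(n)$ into $\mathcal{D}^\pm(n)$ rather than from $F_0$ to $\mathcal{D}(n)$, which makes well-definedness a routine balanced-bilinear check out of a tensor product and lets the subcomplex property of $F_0$ fall out for free, whereas the paper verifies it separately beforehand.
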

\begin{proof}
From the definition of $F_p$, $F_0$ is given in degree $r$ by the  $\mathcal{H}_n$ span of $1\otimes1$ in $\mathcal{HB}_n \otimes_{\mathcal{HB}_{n-r-1}}\mathbb{1}$. For $x \in \mathcal{H}_n$, $\partial^r_j(x \otimes 1) = xT_{n-r+j, n-r} \otimes 1 = xT_{n-r+j, n-r} (1 \otimes 1) \in F_0$, so $F_0$ is indeed a $\mathcal{H}_n$-subcomplex.

In degree $r$, $\mathcal{D}(n)$ is $\mathcal{H}_n \otimes_{\mathcal{H}_{n-r-1}} \mathbb{1}$, so there is an obvious map (of chain complexes of $\mathcal{H}_n$ modules) $\mathcal{D}(n) \xrightarrow{} \mathcal{D}^\pm(n)$. By Proposition \ref{Hw-free-Hj}, $\mathcal{H}_n \otimes_{\mathcal{H}_{n-r-1}}\mathbb{1}$ has a $R$-basis $\{T_x \otimes 1 : x \in (X^{A_{n-1}}_{A_{n-r-2}})^{-1}\}$. By Proposition \ref{Dn-decomp}, this is mapped to an $R$-basis of $(F_0)_r$, so this map is an isomorphism on to $F_0$.
\end{proof}

We now show that $F_p$ is a filtration. It will be convenient to first calculate the product of $U_m = UT_1T_2\ldots  T_{m-1}$ with certain $T_{a,b}$ (relating to the face maps in $\mathcal{D}^\pm(n)$).

\begin{lemma}\label{v-facemap-calc}
For $m \geq b$, $a \geq b, m\neq a$:
\begin{equation*}
    U_m\cdot T_{a,b} = 
    \begin{cases}
      T_{a+1,b+1}U_{m} & m>a>b \\ 
      T_{a,b+1} U_{m+1} & a>m\geq b\\
      U_m & a=b
    \end{cases}
\end{equation*}
\end{lemma}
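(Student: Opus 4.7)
The plan is to verify each of the three cases by a direct computation inside $\mathcal{HB}_n$, reducing everything to two elementary facts. The first is the long-cycle braid identity
\[ (T_1 T_2 \cdots T_{m-1}) T_k = T_{k+1} (T_1 T_2 \cdots T_{m-1}) \qquad (1 \leq k \leq m-2), \]
which one proves in three lines by writing $T_1\cdots T_{m-1} = (T_1\cdots T_{k-1})(T_k T_{k+1})(T_{k+2}\cdots T_{m-1})$, commuting $T_k$ into the middle block (the right-hand block commutes with $T_k$ since every index there differs from $k$ by at least two), applying the braid relation $T_k T_{k+1} T_k = T_{k+1} T_k T_{k+1}$, and commuting the newly produced $T_{k+1}$ back out to the left. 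The second is that $UT_j = T_j U$ whenever $j \geq 2$, immediate from the Coxeter diagram of $\mathcal{B}_n$ (only $u$ and $s_1$ are joined).

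The case $a = b$ is vacuous, since $T_{a,a}$ is the empty product. For the case $m > a > b$ I would iterate the long-cycle identity with $k = a-1, a-2, \ldots, b$, each legal since $a-1 \leq m-2$: this slides $T_{a-1}T_{a-2}\cdots T_b$ leftwards across $T_1\cdots T_{m-1}$, shifting every index up by one, to produce $T_a T_{a-1}\cdots T_{b+1} = T_{a+1,b+1}$ on the left of $T_1\cdots T_{m-1}$. Commuting this block past $U$ using fact (ii) (all its indices are at least $b+1 \geq 2$) gives $T_{a+1,b+1} U_m$.

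The delicate case is $a > m \geq b$, which requires separating the factors of $T_{a,b}$ at the boundary index $m$. I would write
\[ T_{a-1}T_{a-2}\cdots T_b = (T_{a-1}\cdots T_{m+1}) \cdot T_m \cdot (T_{m-1}\cdots T_b), \]
where either outer block may be empty. The top block $(T_{a-1}\cdots T_{m+1})$ commutes with every factor of $UT_1\cdots T_{m-1}$, so it slides to the very front unchanged. The crucial observation is that $UT_1\cdots T_{m-1}\cdot T_m = U_{m+1}$ by definition, absorbing the boundary factor. Finally, the residual $T_{m-1}\cdots T_b$ is moved through $U_{m+1} = UT_1\cdots T_m$ by applying the long-cycle identity one level higher (with $m$ replaced by $m+1$, so valid for $k \leq m-1$, exactly the range we need), each step incrementing the index by one. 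Collecting everything gives $T_{a-1}\cdots T_{m+1}\cdot T_m\cdot T_{m-1}\cdots T_{b+1}\cdot U_{m+1} = T_{a,b+1}U_{m+1}$.

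The main obstacle is keeping track of the boundary index $m$: the non-commuting piece $T_m$ looks like a hindrance, but it is in fact exactly the factor required to promote $U_m$ to $U_{m+1}$, after which the same long-cycle braid identity (now at level $m+1$) completes the argument. Everything else is routine commutation.
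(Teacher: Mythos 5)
Your proof is correct and takes essentially the same approach as the paper's: in the case $m>a>b$ both arguments iterate the same braid-relation commutation (which you cleanly extract as a standalone long-cycle identity), and in the case $a>m\geq b$ both use the identical factorisation $T_{a,b}=T_{a,m+1}T_mT_{m,b}$, commute $T_{a,m+1}$ past $U_m$, absorb $T_m$ via $U_mT_m=U_{m+1}$, and then push $T_{m,b}$ through $U_{m+1}$ by the first case. The only difference is presentational—you factor the reusable commutation identity out up front, whereas the paper carries out the same computation inline as an induction on the number of factors.
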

\begin{proof}
If $a=b$, $T_{a,b} =1$ which gives the final case. We assume $a>b$. First suppose $m>a$:
\[U_mT_{a,b} = UT_1\ldots  T_{m-1}\cdot T_{a-1}\ldots  T_b\]
\[ = UT_1\ldots  T_{a-1}T_aT_{a-1}T_{a+1}\ldots  T_{m-1} \cdot T_{a-2}\ldots  T_{b}\]
\[ = UT_1\ldots  T_aT_{a-1}T_aT_{a+1}\ldots  T_{m-1} \cdot T_{a-2}\ldots  T_{b}\]
\[ = T_a\cdot UT_1\ldots  T_{a-1}T_aT_{a+1}\ldots  T_{m-1} \cdot T_{a-2}\ldots  T_{b}\]
\[ = T_a U_m T_{a-1, b}\]
so inductively $U_mT_{a,b} = T_aT_{a-1}\ldots  T_{b+1}U_m = T_{a+1, b+1} U_m$.

Now if $m<a$, we may write $T_{a,b} = T_{a,m+1}T_mT_{m,b}$. All terms in $T_{a,m+1}$ commute with $U_m$, since they are $T_k$, $k \geq m+1$, and $U_m$ involves $U, T_k$ for $k \leq m-1$. $U_mT_m = U_{m+1}$ by definition, so:
\[U_m T_{a,b} = U_mT_{a,m+1}T_mT_{m,b} = T_{a,m+1}U_mT_mT_{m,b}\]
\[ = T_{a,m+1}U_{m+1}T_{m,b} = T_{a,m+1}T_{m+1, b+1} U_{m+1}\]
\[ = T_{a,b+1}U_{m+1}\]
where the second last equality comes from the first case.
\end{proof}

Upon digesting the above lemma, the reader may notice that we have skipped the only case of the calculation ($m=a$) in which the quadratic relations of $\mathcal{HB}_n$ are used. Further, upon performing this case of the calculation, the reader will observe that the answer involves a sum of distinct terms with various coefficients. Not only do these terms lead to hopeless complications downstream, they are in fact the chief difficulty in trying to construct a filtration of $\mathcal{D}^\pm (n)$. It is commendable that our filtration $F_p$ allows us to skip this case.

\begin{prop}\label{fp-is-filtration}
$F_p$ is a subcomplex of $\mathcal{D}^\pm (n)$. Furthermore, $\partial_j^r  (F_p)_r \subseteq (F_{p-1})_{r-1}$ when $j < p$.
\end{prop}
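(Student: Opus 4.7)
The plan is to check the containments on a typical generator $V(\mathbf{m}') \otimes 1$ of $(F_p)_r$, where $\mathbf{m}' = \mathbf{m} + n - r - 1$ with $\min\{r+1, p\} \geq m_1 > \cdots > m_t \geq 1$, and then extend by left $\mathcal{H}_n$-linearity. This reduction is legitimate because $\partial_j^r$ acts by right multiplication by $T_{n-r+j, n-r}$ and so commutes with the $\mathcal{H}_n$-action; the goal becomes computing $V(\mathbf{m}')\, T_{n-r+j,\,n-r} \otimes 1$ and expressing the answer in the basis for $(F_p)_{r-1}$ (respectively $(F_{p-1})_{r-1}$ when $j < p$).

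The case $j = 0$ is essentially bookkeeping: $T_{n-r, n-r}$ is the empty product, so $\partial_0^r$ merely re-expresses $V(\mathbf{m}') \otimes 1$ in a new tensor product over $\mathcal{HB}_{n-r}$. If $m'_t = n-r$, then the trailing $U_{n-r} \in \mathcal{HB}_{n-r}$ absorbs across the tensor as the scalar $q^{n-r}$, leaving $V(m'_1, \ldots, m'_{t-1}) \otimes 1$; otherwise all $m'_i \geq n-r+1$ and $V(\mathbf{m}')$ is directly a basis element of degree $r-1$ under the shift $\mathbf{m} \mapsto \mathbf{m} - 1$. In either subcase the new leading index is $m_1 - 1 \leq p - 1$, placing the image in $(F_{p-1})_{r-1}$.

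For $j \geq 1$, I would iteratively apply Lemma \ref{v-facemap-calc} to commute $T_{a, b}$ (with $a = n-r+j$, $b = n-r$) rightward through $U_{m'_1} \cdots U_{m'_t}$, processing $U$'s from right to left. Each step is either Case A of the lemma ($m'_i$ strictly above the current $T$-top: $U$-index preserved, $T$-indices shift up together) or Case B ($m'_i$ strictly below: $U$-index becomes $m'_i + 1$, $T$-bottom shifts up). When the equality $m'_i = \tilde a$ falls outside the lemma, I would invoke the quadratic relation $T_{\tilde a - 1}^2 = (q-1) T_{\tilde a - 1} + q$ on the offending middle factor to split the local expression into two terms with a strictly shorter $T$-factor, to which induction on the $T$-length applies. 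After the iterations the residual $T$-factor is either trivial or lies to the left as an $\mathcal{H}_n$-factor, and the product of $U$'s (possibly after further normalization) gives a new $V(\mathbf{m}^*)$. To see that the filtration bound survives: in Case A branches the leading index $m'_1$ is unchanged, giving $m_1 - 1 \leq p - 1$ under the degree-$r-1$ reindexing; in Case B and exceptional branches the updated leading index is bounded by the terminal value of $\tilde a$, itself at most $a = n-r+j$, which again gives a degree-$r-1$ index of at most $j \leq p - 1$.

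The main obstacle is resolving the collisions that arise when an increment in Case B (or in the exceptional case) causes two adjacent $U$-indices to coincide, so that one cannot immediately read off a valid $V(\mathbf{m}^*)$. These are handled by identities such as $U_m U_m = T_1 U_m U_{m-1}$, derived from the braid relation $UT_1 U T_1 = T_1 UT_1 U$ together with the fact that $T_i$ commutes with $U$ for $i \geq 2$; each such resolution extracts an $\mathcal{H}_n$-factor on the left (harmless since $F_p$ is an $\mathcal{H}_n$-submodule) and strictly decreases the number of duplicate $U$-indices. Ensuring termination and carefully collating every emerging term—with its scalar coefficients and filtration level—is where the bulk of the technical work lies.
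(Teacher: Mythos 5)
Your overall plan (reduce to generators $V(\mathbf{m}+n-1-r)\otimes 1$ by $\mathcal{H}_n$-linearity, then analyse $V\cdot T_{n-r+j,n-r}$ via Lemma~\ref{v-facemap-calc}) starts from the same place as the paper, but the execution diverges in a way that leaves a genuine gap, and you acknowledge as much in your last sentence. The paper's proof hinges on a case split by the relationship between $j$ and $m_1$ that you never identify, and that split is precisely what makes the exceptional case $m'_i = \tilde{a}$ and the ``collision'' of $U$-indices \emph{never occur}. Concretely: if $j \geq m_1$ then $a = n-r+j$ strictly exceeds every $m'_i = m_i + n - 1 - r$, so at each step of the right-to-left commutation only the middle case of Lemma~\ref{v-facemap-calc} applies, the $T$-factor's top index $a$ never changes, and one obtains the closed form $V(\mathbf{m}+n-1-r)T_{n-r+j,n-r} = T_{n-r+j,\,n-r+t}\,V(\mathbf{m}+n-1-(r-1))$ by a short induction on $t$. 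If instead $j \leq m_1 - 1$, the paper does \emph{no} commutation at all: it observes that all indices appearing in $V(\mathbf{m}+n-1-r)\,T_{n-r+j,n-r}$ are small enough that the product lies in the subalgebra $\mathcal{HB}_{m_1+n-1-r}$, applies Proposition~\ref{Dn-decomp} to that smaller subalgebra to re-expand in the $T_x V(\mathbf{m}')\otimes 1$ basis with $m'_1 \leq m_1 + n - 1 - r$, and reads off the filtration bound directly. This sidesteps the quadratic relation entirely.

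Your approach, by contrast, would have to process terms in which Case A of Lemma~\ref{v-facemap-calc} increments the top index $a \mapsto a+1$; after several such steps the $T$-factor's top index can strictly exceed $n-r+j$, so your claim that ``the terminal value of $\tilde{a}$ \ldots is at most $a = n-r+j$'' is false in general, and with it the purported filtration bound $m_1^{\mathrm{new}} \leq j \leq p-1$. Moreover, once Case B raises a $U$-index it can collide with the next one, and resolving this via braid/quadratic identities such as $U_m U_m = T_1 U_m U_{m-1}$ launches a cascade that generates many terms with mixed $\mathcal{H}_n$-factors; you correctly note this is ``where the bulk of the technical work lies'' but do not carry it out, and it is not clear the bookkeeping terminates with the needed bound. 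So the proposal is not wrong in spirit, but it is missing the structural observation ($j<m_1$ versus $j\geq m_1$) that makes the argument tractable, and the remaining work you defer is substantial rather than routine.
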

\begin{proof}
The face maps are $\mathcal{H}_n$-linear, so it suffices to evaluate them on the generators $\{V(\mathbf{m} + n - 1 -r) \otimes 1 : \min\{r+1, p\} \geq m_1 >\ldots >m_t\geq 1\}$ of $(F_p)_r$. Suppose first that $j \leq m_1 - 1 < p$. Then $n-r+j \leq m_1 + n - 1 - r$ so $V(\mathbf{m} + n - 1 -r)T_{n-r+j, n-r}$ is contained in the subalgebra $\mathcal{HB}_{m_1 + n - 1 -r}$. Proposition \ref{Dn-decomp} says that (in particular) $\mathcal{HB}_{m_1 + n - 1 -r}$ is spanned over $\mathcal{H}_{m_1 + n - 1 -r}$ by $\{V(\mathbf{m}') : m_1' \leq m_1 + n - 1 - r\}$, so $V(\mathbf{m} + n - 1 -r)T_{n-r+j, n-r}$ is in this span. If $V(\mathbf{m}')$ ends in a factor $U_{m_t'}$ with $m_t' \leq n-r$, this factor may be moved over the tensor product in $\mathcal{HB}_n \otimes_{\mathcal{HB}_{n-r}} \mathbb{1}$, which results only in a number of multiplications by $q$. Hence we see that
\[\partial_j^r V(\mathbf{m} + n - 1 -r) \otimes 1 = V(\mathbf{m} + n - 1 -r)T_{n-r+j, n-r} \otimes 1\]
lies in the $\mathcal{H}_n$ span of $\{V(\mathbf{m}') \otimes 1 : m_1 + n - 1 - r \geq m_1' > \ldots   > m_t' \geq n-r+1\}$, which is equal to the set $\{V(\mathbf{m}' + n - 1 - (r-1)) \otimes 1 : m_1 - 1 \geq m_1' > \ldots   > m_t' \geq 1\}$. By definition of $F_p$, $m_1 \leq \min\{r+1, p\}$, so $m_1' \leq \min\{r, p-1\}$ and hence $\partial_j^r V(\mathbf{m} + n - 1 -r) \in (F_{p-1})_{r-1}$.

Now suppose $j \geq m_1$, so that $n-r+j > m_i + n -r -1$ for all $i$. Then we show by induction on $t$ that:
\[ V(\mathbf{m} + n -1 - r)T_{n-r+j, n-r} = T_{n-r+j, n-r+t}V(\mathbf{m} + n - 1 - (r-1)) \]
When $t=0$, $V(\mathbf{m} + n -1 - r)=V(\mathbf{m} + n - 1 - (r-1))=1$ and there is nothing to show. For $t>0$, by the second case of Lemma \ref{v-facemap-calc} (which we may apply since $n-r+j > m_t + n - r - 1 \geq n-r$):
\[ U_{m_t+n-1-r}T_{n-r+j,n-r} = T_{n-r+j, n-r+1}U_{m_t+n-r}\]
so then
\[V(\mathbf{m} + n -1 - r)T_{n-r+j, n-r} = V(m_1+n-1-r,\ldots  ,m_{t-1} + n - 1 -r)U_{m_t+n-1-r}T_{n-r+j,n-r}\]
\[ = V(m_1+n-1-r,\ldots  ,m_{t-1} + n - 1 -r)T_{n-r+j, n-r+1}U_{m_t+n-r} \]
We now apply the inductive hypothesis, after a suitable re-indexing: write $\mathbf{m}'=(m_1-1,\ldots,m_{t-1}-1)$, so that the last expression can be written as
\[ V(\mathbf{m}'+n-1-(r-1))T_{n-(r-1)+(j-1),n-(r-1)}U_{m_t+n-r} \]
Now $j-1\leq m'_1=m_1-1$, so by induction this is:
\[ = T_{n-r+j, n-r+t}V(m_1+n - r,\ldots  ,m_{t-1} + n -r)U_{m_t+n-r}\]
which is $T_{n-r+j, n-r+t}V(\mathbf{m} + n - 1 - (r-1))$.

Thus we have shown:
\[\partial_j^r V(\mathbf{m} + n - 1 -r) \otimes 1 = T_{n-r+j, n-r+t}V(\mathbf{m} + n - 1 - (r-1))\otimes 1\]
$r \geq j \geq m_1$, so $m_1 \leq \min\{r, p\}$, and hence this element lies in $(F_p)_{r-1}$, completing the proof that $F_p$ is a filtration. Also if $p > j \geq m_1$ then it is in $(F_{p-1})_{r-1}$, and consequently the latter statement holds.
\end{proof}

\begin{lemma}
$F_n$ is the entire complex $\mathcal{D}^\pm (n)$.
\end{lemma}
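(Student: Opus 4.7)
The plan is to compare the defining generating set of $(F_n)_r$ with the explicit $R$-basis for $\mathcal{D}^\pm(n)_r$ provided by Proposition \ref{Dn-decomp}, and observe that they match up to the $\mathcal{H}_n$-action.

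By definition, $(F_n)_r$ is the $\mathcal{H}_n$-span of the elements $V(\mathbf{m} + n - r - 1) \otimes 1$ where $\min\{r+1, n\} \geq m_1 > m_2 > \ldots > m_t \geq 1$. Since $r \leq n-1$ throughout, $\min\{r+1, n\} = r+1$. Reindexing via $\mathbf{m}' = \mathbf{m} + n - r - 1$, the defining generators become $V(\mathbf{m}') \otimes 1$ for $n \geq m_1' > m_2' > \ldots > m_t' \geq n - r$, which is precisely the range of $\mathbf{m}$ appearing in the basis from Proposition \ref{Dn-decomp}.

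Now Proposition \ref{Dn-decomp} asserts that $\mathcal{D}^\pm(n)_r$ is free as an $R$-module with basis
\[ \bigsqcup_{n \geq m_1 > \ldots > m_t \geq n-r}\{T_x V(\mathbf{m}) \otimes 1 : x \in (X^{\mathfrak{S}_n}_{\{s_{1+t},\ldots,s_{n-r-2+t}\}})^{-1}\}. \]
Each such $x$ lies in $\mathfrak{S}_n$, so $T_x \in \mathcal{H}_n$, and hence every basis element $T_x V(\mathbf{m}) \otimes 1$ lies in $\mathcal{H}_n \cdot (V(\mathbf{m}) \otimes 1) \subseteq (F_n)_r$. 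Since $(F_n)_r$ contains a spanning set for $\mathcal{D}^\pm(n)_r$ and is by construction a submodule of it, the two are equal.

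This argument works uniformly in each degree, so $F_n = \mathcal{D}^\pm(n)$ as chain complexes. There is no real obstacle here: the statement is essentially a bookkeeping check that the indexing set used to define the top stage of the filtration matches the indexing set of the basis from Proposition \ref{Dn-decomp}, and the only mild point to verify is that the truncation $\min\{r+1,n\}$ in the definition of $F_p$ imposes no extra constraint when $p=n$.
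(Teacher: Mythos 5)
Your argument is correct and follows the same route as the paper: both observe that $r<n$ forces $\min\{r+1,n\}=r+1$, reindex the generating set by $\mathbf{m}\mapsto\mathbf{m}+n-r-1$, and invoke Proposition~\ref{Dn-decomp} to see that this $\mathcal{H}_n$-span is all of $\mathcal{D}^\pm(n)_r$. Your extra step of noting $T_x\in\mathcal{H}_n$ for the basis elements makes the spanning claim slightly more explicit than the paper's phrasing, but it is the same argument.
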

\begin{proof}
By definition, $\mathcal{D}^\pm (n)$ is $0$ outside of degrees $-1 \leq r \leq n-1$, so we may restrict attention to degrees $r<n$. In these degrees, $\min\{r+1, n\} = r+1$. Then in degree $r$, $F_n$ is the $\mathcal{H}_n \otimes 1$ span of $V(m_1 + n - 1 -r,\ldots ,m_t + n-1-r)$ for $r+1 \geq m_1 >\ldots  >m_t\geq 1$. If $\widetilde{\mathbf{m}} = \mathbf{m} + n - (r+1)$, then this is $V(\widetilde{m_1},\ldots  ,\widetilde{m_t}) \otimes 1$ for $n \geq \widetilde{m_1} > \ldots   > \widetilde{m_t} \geq n-r$, which spans $(\mathcal{D}^\pm (n))_r$ over $\mathcal{H}_n$ by Proposition \ref{Dn-decomp}.
\end{proof}

\section{Identifying the filtration quotients}\label{Fp-quotients-section}
In this section, we identify the quotients $F_p/F_{p-1}$ and show that they are highly acyclic. This proceeds in three steps: first a description of basis elements and the chain maps in $F_p/F_{p-1}$ is obtained, and it is shown that it breaks up as a direct sum of subcomplexes determined by the $V(\mathbf{m})$. This is exactly analogous to the decomposition in Lemma \ref{C-quotient-lemma}, where the quotient was the direct sum of $2^{p-1}$ subcomplexes determined by choice of signs for the first $p-1$ elements. Next, each subcomplex is shown to be isomorphic to the suspension of a new complex, $\mathcal{M}^t$. Finally, $\mathcal{M}^t$ is related to the complex $\mathcal{D}(n-p)$, whence high acyclicity is obtained.

\begin{lemma}\label{second-calc}
Fix $m_1 > m_2 > \ldots >m_1 \geq 0$. For $t < k < m_1$:
\[ T_kV(\mathbf{m}) = V(\mathbf{m})T_{k-t}\]
\end{lemma}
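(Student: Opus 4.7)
The plan is to prove this by induction on $t$, moving $T_k$ through $V(\mathbf{m}) = U_{m_1} U_{m_2} \cdots U_{m_t}$ one factor at a time, with the subscript dropping by one at each step. The base case $t = 1$ asserts $T_k U_{m_1} = U_{m_1} T_{k-1}$ for $1 < k < m_1$; this is essentially Case 1 of Lemma \ref{v-facemap-calc} rearranged (take $(a,b) = (k, k-1)$ in the identity $U_m T_{a,b} = T_{a+1, b+1} U_m$, valid when $m > a > b$, which gives $U_{m_1} T_{k-1} = T_k U_{m_1}$). Equivalently, the $t=1$ case can be done by a direct braid calculation: $T_k$ commutes past $U$ and past $T_1, \ldots, T_{k-2}$ (all of which commute with $T_k$), the braid relation $T_k T_{k-1} T_k = T_{k-1} T_k T_{k-1}$ is applied, and the emergent $T_{k-1}$ then commutes past the remaining $T_{k+1}, \ldots, T_{m_1 - 1}$ to exit on the right.

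For the inductive step, I would factor $V(\mathbf{m}) = U_{m_1} \cdot V(m_2, \ldots, m_t)$, apply the base case to move $T_k$ across $U_{m_1}$, producing
\[ T_k V(\mathbf{m}) \;=\; U_{m_1}\, T_{k-1}\, V(m_2, \ldots, m_t), \]
and then invoke the inductive hypothesis on the length $(t-1)$ sequence $(m_2, \ldots, m_t)$ with index $k-1$. This rewrites $T_{k-1} V(m_2, \ldots, m_t)$ as $V(m_2, \ldots, m_t) \, T_{(k-1)-(t-1)} = V(m_2, \ldots, m_t)\, T_{k-t}$, and reassembling gives the claimed identity $T_k V(\mathbf{m}) = V(\mathbf{m}) T_{k-t}$.

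The only genuine issue---and the step that warrants care---is bookkeeping the hypotheses on $k$. Each use of the base case against the $i$-th factor $U_{m_i}$ requires $1 < k - (i-1) < m_i$, i.e.\ $i < k < m_i + i - 1$. Intersecting over $i = 1, \ldots, t$ and noting that $m_1 > m_2 > \cdots > m_t$ makes the sequence $m_i + i - 1$ weakly decreasing in $i$, the effective condition produced by the induction is $t < k < m_t + t - 1$. This must be checked to agree with (or imply) the range hypothesis under which the lemma is stated and later applied; once that is confirmed, the lemma is simply the iterated application of Case 1 of Lemma \ref{v-facemap-calc}, with no new braid manipulations beyond those in the base case.
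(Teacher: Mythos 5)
Your proof follows the same strategy as the paper's: induct on $t$, pushing $T_k$ through the factors $U_{m_i}$ one at a time via the base case $T_k U_m = U_m T_{k-1}$ (valid for $1 < k < m$, which is Case~1 of Lemma~\ref{v-facemap-calc} with $(a,b)=(k,k-1)$). The one place where you diverge from the paper is precisely the place where your instincts were correct. The effective hypothesis your induction establishes is $t < k < m_t + t - 1$, and this is \emph{strictly} stronger than the stated $t < k < m_1$ whenever the $m_i$ are not consecutive, since $m_1 \geq m_t + (t-1)$ with equality only when $m_{i+1} = m_i - 1$ for every $i$. You should therefore not merely flag this as a thing to check --- the check fails, and the lemma is false as written. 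Take $t=2$, $m_1 = 5$, $m_2 = 1$, $k = 3$, so $t < k < m_1$ holds. Then
\[
T_3\,V(5,1) \;=\; T_3\,UT_1T_2T_3T_4\,U \;=\; UT_1T_2T_3T_4T_2\,U \;=\; V(5,1)\,T_2 \;\neq\; V(5,1)\,T_1 ,
\]
where the middle equality moves $T_3$ past $U$ and $T_1$, applies the braid relation $T_3T_2T_3 = T_2T_3T_2$, and commutes the residual $T_2$ to the right. The paper's proof slips at the sentence ``since $k < m_1$ implies $k - 1 < m_2$'', which is false whenever $m_1 > m_2 + 1$.

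Under your corrected hypothesis $t < k < m_t + t - 1$ the inductive step is sound: the base case applies since $k < m_t + t - 1 \leq m_1$, and the inductive hypothesis for the shortened sequence $(m_2,\ldots,m_t)$ with index $k - 1$ requires $t - 1 < k - 1 < m_t + (t-1) - 1$, which is exactly what $t < k < m_t + t - 1$ gives. One should then verify that the paper only applies the lemma in this corrected range; it does. In Proposition~\ref{quotient-disc} the lemma is invoked with $k' = k+t$ ($1 \leq k \leq n-r-2$) and $\mathbf{m}' = \mathbf{m} + n - 1 - r$ with $m_t \geq 1$, so $m'_t + t - 1 \geq n - r + t - 1 > n - r - 2 + t \geq k'$; and the analogous bound holds in the well-definedness check for $\Phi$. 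So the downstream arguments survive, but the statement of the lemma should be amended to $t < k < m_t + t - 1$ (equivalently, $k - i + 1 < m_i$ for all $i$), and the proof's justification for applying the inductive hypothesis should be repaired accordingly.
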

\begin{proof}
We proceed via induction on $t$. If $t=0$, $V(\mathbf{m})=1$, so there is nothing to show. For $t>1$, the first case of Lemma \ref{v-facemap-calc} ($T_k = T_{k+1,k}$, $m_1 > k$) shows that $T_kU_{m_1} = U_{m_1}T_{k-1}$. We may then apply the inductive hypothesis to $T_{k-1}$ and $V(m_2,\ldots,m_t)$, since $k < m_1$ implies $k-1 < m_2$. Thus:
\begin{align*}
    T_kV(\mathbf{m}) &= T_kU_{m_1}V(m_2,\ldots,m_t)\\
                    &= U_{m_1}T_{k-1}V(m_2,\ldots,m_t)\\
                    &= U_{m_1}V(m_2,\ldots,m_t)T_{k-t} = V(\mathbf{m})T_{k-t}\qedhere
\end{align*}
\end{proof}

\begin{prop}\label{quotient-disc}
For $r\geq p -1$, $(F_p/F_{p-1})_r$ is a free $R$ module with basis
\[\bigsqcup_{p \ = \ m_1 > m_2 >\ldots > m_t \geq 1}\{T_xV(\mathbf{m} + n - 1 -r) \otimes 1 : x \in (X^{A_{n-1}}_{\{s_{1+t},\ldots ,s_{n-r-2+t}\}})^{-1}\}\]
where $t \geq 1$, and is $0$ for $r<p-1$. Moreover, the $R$ span of each basis subset $\{T_xV(\mathbf{m} + n - 1 -r) \otimes 1 : x \in (X^{A_{n-1}}_{\{s_{1+t},\ldots ,s_{n-r-2+t}\}})^{-1}\}$ is equal to the $\mathcal{H}_n$ span of $V(\mathbf{m} + n - 1 -r) \otimes 1$.
\end{prop}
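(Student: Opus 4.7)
The plan is to describe both $(F_p)_r$ and $(F_{p-1})_r$ as $R$-spans of explicit subsets of the basis from Proposition~\ref{Dn-decomp} and then read off the quotient. The technical heart is the ``moreover'' clause, which I would establish first, since once it is available the basis statement and the vanishing for $r < p-1$ both follow by bookkeeping against Proposition~\ref{Dn-decomp}.

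For the moreover clause, fix $\mathbf{m}$ with $p = m_1 > m_2 > \ldots > m_t \geq 1$, so in particular $t \leq p$, and set $J' = \{s_{1+t}, \ldots, s_{n-r-2+t}\}$. The containment of the $R$-span in the $\mathcal{H}_n$-span is immediate, so the real content is the reverse. I would write an arbitrary $h \in \mathcal{H}_n$ as $h = \sum_x T_x k_x$ with $x \in (X^{\mathfrak{S}_n}_{J'})^{-1}$ and $k_x \in \mathcal{H}_{J'}$, using the right-module version of Proposition~\ref{Hw-free-Hj}. Then Lemma~\ref{second-calc} lets me commute each generator $T_k$ of $\mathcal{H}_{J'}$ (for $k \in \{t+1, \ldots, n-r-2+t\}$) past $V(\mathbf{m}+n-1-r)$, converting it to $T_{k-t}$ on the right; the hypothesis $t < k < p+n-1-r$ of the lemma holds uniformly because $t \leq p$. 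Iterating over reduced expressions for each basis element $T_w \in \mathcal{H}_{J'}$ shows $k_x V(\mathbf{m}+n-1-r) = V(\mathbf{m}+n-1-r) k_x'$ for some $k_x' \in \mathcal{H}_{n-r-1} \subseteq \mathcal{HB}_{n-r-1}$. Since the tensor product is over $\mathcal{HB}_{n-r-1}$, $k_x'$ crosses the tensor and acts on $\mathbb{1}$ as a scalar, exhibiting $hV(\mathbf{m}+n-1-r) \otimes 1$ as an $R$-combination of the proposed basis elements.

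With the moreover clause in hand, the first statement falls out by combining the definition of $F_p$ with the re-indexed basis of Proposition~\ref{Dn-decomp}: $(F_p)_r$ is the $R$-span of the basis elements $T_x V(\mathbf{m}+n-1-r) \otimes 1$ for which $m_1 \leq \min\{r+1,p\}$, and $(F_{p-1})_r$ is the analogous $R$-span with bound $\min\{r+1,p-1\}$. For $r \geq p-1$ these bounds become $p$ and $p-1$, so the quotient is free over $R$ with basis the images of those basis elements with $m_1 = p$ exactly (which forces $t \geq 1$), matching the claim. For $r < p-1$ both minima equal $r+1$, so $(F_p)_r$ and $(F_{p-1})_r$ coincide and the quotient vanishes. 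The main obstacle I anticipate is the index arithmetic in the moreover clause, specifically verifying that Lemma~\ref{second-calc} applies uniformly to every generator of $\mathcal{H}_{J'}$; this reduces to the inequality $t \leq m_1 = p$, which is automatic from the combinatorial constraint $p = m_1 > \ldots > m_t \geq 1$, so no further work is needed once this observation is made.
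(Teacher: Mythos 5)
Your proposal is correct and takes essentially the same approach as the paper's proof: you decompose an element of $\mathcal{H}_n$ via the parabolic decomposition (equivalently, the factorization $y = xz$ into a distinguished coset representative times an element of $W_{J'}$ that the paper uses), invoke Lemma~\ref{second-calc} to shuttle the $\mathcal{H}_{J'}$-factor past $V(\mathbf{m}+n-1-r)$, and then let it cross the tensor product over $\mathcal{HB}_{n-r-1}$ to act as a scalar. The one small difference is organizational: you establish the ``moreover'' clause before the basis statement, which is the logically cleaner order, since the identification of $(F_p)_r$ with an $R$-span of basis elements from Proposition~\ref{Dn-decomp} genuinely relies on the $\mathcal{H}_n$-span equalling the $R$-span.
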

\begin{proof}
The first statement is immediate from Theorem \ref{Dn-decomp} and the definition of the filtration. For the second, the inclusion of $\{T_xV(\mathbf{m} + n - 1 -r) \otimes 1 : x \in (X^{A_{n-1}}_{\{s_{1+t},\ldots ,s_{n-r-2+t}\}})^{-1}\}$ in $\mathcal{H}_n\cdot V(\mathbf{m} + n - 1 -r) \otimes 1$ is clear.

In the other direction, by the basis theorem (Theorem \ref{basis-theorem}) for Iwahori-Hecke algebras it is enough to verify that $T_y\cdot V(\mathbf{m} + n - 1 -r) \otimes 1$ is contained in the $R$ span of this set for each $y\in \mathfrak{S_n}$. Each $y$ may be written as $y=xz$ for $x \in (X^{A_{n-1}}_{\{s_{1+t},\ldots ,s_{n-r-2+t}\}})^{-1}$, $z \in \langle s_{1+t},\ldots ,s_{n-r-2+t}\rangle $, with $x, z$ reduced so that $T_y = T_xT_z$.
Given $T_{k+t}$, with $k < n-r-1$ and $m_1\geq t>1$, we have $k+t < n - r -1 +t < m_1 + n - 1 -r$, so by Lemma \ref{second-calc}:
\[ T_{k+t}V(\mathbf{m} + n - 1 -r) = V(\mathbf{m} + n - 1 -r)T_k \]

We are working in (a quotient of) $\mathcal{HB}_n \otimes_{\mathcal{HB}_{n-r-1}} \mathbb{1}$, so $V(\mathbf{m} + n - 1 -r)T_k \otimes 1 = q(U_{m_1 + n - 1 -r} \otimes 1)$, since $k < n - r -1$. The element $T_z$ above may be written as the product of such $T_{k+t}$, so we see that
\[ T_y\cdot V(\mathbf{m} + n - 1 -r) \otimes 1 = q^{l(z)} T_xV(\mathbf{m} + n - 1 -r) \otimes 1\]
lies in the $R$ span of $\{T_xV(\mathbf{m} + n - 1 -r) \otimes 1 : x \in (X^{A_{n-1}}_{\{s_{1+t},\ldots ,s_{n-r-2+t}\}})^{-1}\}$.
\end{proof}

To understand the quotients $F_p/F_{p-1}$, it is necessary find a useful description of the differentials. Throughout it will be important to remember that $p=m_1 > \ldots   >m_t \geq 1$ implies $p\geq t$. 

\begin{lemma}\label{quotient-facemaps}
The face maps $\partial_j^r$ of $F_p/F_{p-1}$ are $\mathcal{H}_n$-linear and have the following effect on the elements $V(\mathbf{m} + n - 1 -r)$, for $p=m_1>\ldots  >m_t\geq1$:
\begin{equation*}
    \partial_j^r V(\mathbf{m} + n - 1 -r) =
    \begin{cases}
      0 & j < p \\
      T_{n-r+j, n-r+t} V(\mathbf{m} + n - 1 - (r-1)) & j \geq p
    \end{cases}
\end{equation*}
\end{lemma}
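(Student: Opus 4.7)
The plan is to harvest the computations already performed in the proof of Proposition \ref{fp-is-filtration}, where both cases of this lemma appeared implicitly; the only new content is the $\mathcal{H}_n$-linearity assertion and the reinterpretation of the earlier work in the quotient.

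First I would establish $\mathcal{H}_n$-linearity. By definition, the face map $\partial_j^r$ on $\mathcal{HB}_n \otimes_{\mathcal{HB}_{n-r-1}} \mathbb{1}$ is given by $x \otimes 1 \mapsto x T_{n-r+j, n-r} \otimes 1$, which is left $\mathcal{HB}_n$-linear and hence left $\mathcal{H}_n$-linear. Since $F_p$ and $F_{p-1}$ are $\mathcal{H}_n$-subcomplexes by Proposition \ref{fp-is-filtration}, this restricts and descends to an $\mathcal{H}_n$-linear map on $F_p/F_{p-1}$.

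For the case $j < p$, the second statement of Proposition \ref{fp-is-filtration} already gives $\partial_j^r (F_p)_r \subseteq (F_{p-1})_{r-1}$. In particular each generator $V(\mathbf{m}+n-1-r) \otimes 1$ of $(F_p)_r$ is sent into $(F_{p-1})_{r-1}$, so its class in the quotient is zero, as claimed.

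For the case $j \geq p = m_1$, the identity
\[ V(\mathbf{m} + n - 1 - r)\, T_{n-r+j, n-r} \;=\; T_{n-r+j,\, n-r+t}\, V(\mathbf{m} + n - 1 - (r-1)) \]
was derived in the second half of the proof of Proposition \ref{fp-is-filtration} by induction on $t$, repeatedly applying the second case of Lemma \ref{v-facemap-calc} to slide each factor $U_{m_i+n-1-r}$ past a $T_{\bullet,\bullet}$; at the $i$th step the required inequality $n-r+j > m_i + n - 1 - r$ follows from $j \geq m_1 > m_i$. Tensoring with $1$ over $\mathcal{HB}_{n-r}$ yields the displayed formula with the trailing factor $V(\mathbf{m}+n-1-(r-1))$ applied to $1$, which is the claim. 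The only thing to check is that no subtlety arises from tensoring: since $T_{n-r+j, n-r+t}$ involves only generators with index $\geq n-r+t > n-r-1$ and $V(\mathbf{m}+n-1-(r-1))$ already lives in the basis of Proposition \ref{Dn-decomp} as a generator of $(\mathcal{D}^\pm(n))_{r-1}$, the right-hand side is a well-defined element of $\mathcal{HB}_n \otimes_{\mathcal{HB}_{n-r}} \mathbb{1}$.

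The whole argument is essentially bookkeeping; the only genuine computational ingredient, namely the inductive straightening via Lemma \ref{v-facemap-calc}, has already been executed in the previous section, so the present lemma should follow almost by citation.
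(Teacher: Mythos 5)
Your proof is correct and follows exactly the same route as the paper's: cite the second statement of Proposition \ref{fp-is-filtration} for the vanishing when $j<p$, and cite the inductive computation inside the proof of Proposition \ref{fp-is-filtration} (which used the second case of Lemma \ref{v-facemap-calc}) for the identity when $j\geq p$; the $\mathcal{H}_n$-linearity observation is the same as the paper's terse "immediate."
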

\begin{proof}
$\mathcal{H}_n$ linearity is immediate. Proposition \ref{fp-is-filtration} states that $\partial_j^r (F_p)_r \subseteq (F_{p-1})_{r-1}$ for $j<p$, so that $\partial_j^r$ is $0$ in the quotient. The calculation for the second case occurs in the proof of Proposition \ref{fp-is-filtration}.
\end{proof}
In $\mathcal{C}^\pm(n)$, the filtration quotients decomposed as the sum of $2^{p-1}$ submodules. With the above result, it is clear that this still holds in the Iwahori-Hecke setting---there is a subcomplex for each $\mathbf{m}$ with $m_1 = p$.
\begin{lemma}\label{quotient-direct-sum}
$F_p/F_{p-1}$ is the direct sum of $2^{p-1}$ $\mathcal{H}_n$-subcomplexes $M_{\mathbf{m}}$ (for $p=m_1>\ldots  >m_t\geq 1$), where  $M_{\mathbf{m}}$ is given in degree $r$ by the $\mathcal{H}_n$ span of $V(\mathbf{m} + n - 1 - r)$ .
\end{lemma}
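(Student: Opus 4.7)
The plan is to derive the decomposition essentially as a direct corollary of Proposition \ref{quotient-disc} and Lemma \ref{quotient-facemaps}, so the bulk of the work will be bookkeeping rather than new computations.

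First I would extract a candidate decomposition directly from Proposition \ref{quotient-disc}. That proposition tells us that $(F_p/F_{p-1})_r$ is a free $R$-module whose basis partitions according to the decreasing sequences $p = m_1 > m_2 > \ldots > m_t \geq 1$, and that for each such $\mathbf{m}$ the $R$-span of the associated basis subset coincides with the $\mathcal{H}_n$-span of $V(\mathbf{m} + n - 1 - r) \otimes 1$. Taking this $\mathcal{H}_n$-span degreewise as the definition of $M_{\mathbf{m}}$ immediately exhibits $(F_p/F_{p-1})_r$ as an internal direct sum of $\mathcal{H}_n$-submodules indexed by the admissible $\mathbf{m}$.

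Next I would check that each $M_{\mathbf{m}}$ is stable under the differential. By Lemma \ref{quotient-facemaps}, the face maps are $\mathcal{H}_n$-linear, and on the generator $V(\mathbf{m} + n - 1 - r) \otimes 1$ they either vanish (when $j < p$) or produce $T_{n-r+j,\,n-r+t}\,V(\mathbf{m} + n - 1 - (r-1)) \otimes 1$, with the same $\mathbf{m}$. In both cases the result lies in the $\mathcal{H}_n$-span of $V(\mathbf{m} + n - 1 - (r-1)) \otimes 1$, i.e.\ inside $M_{\mathbf{m}}$ in degree $r-1$. Since $\partial^r$ is an alternating signed sum of such face maps, each $M_{\mathbf{m}}$ is a subcomplex, and the direct sum decomposition descends to the level of chain complexes of $\mathcal{H}_n$-modules.

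Finally, to count the summands I would fix $m_1 = p$ and observe that the remaining indices $\{m_2, \ldots, m_t\}$ range over arbitrary (possibly empty) subsets of $\{1, \ldots, p-1\}$, giving $2^{p-1}$ admissible sequences $\mathbf{m}$ and hence $2^{p-1}$ subcomplexes $M_{\mathbf{m}}$. I do not anticipate any serious obstacle; the only thing to be mildly careful about is the edge case $t = 1$ (empty tail, $\mathbf{m} = (p)$), but the formulas of Lemma \ref{quotient-facemaps} cover it without modification.
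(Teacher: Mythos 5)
Your proposal is correct and follows essentially the same route as the paper: the degreewise direct-sum decomposition comes straight from Proposition \ref{quotient-disc}, the subcomplex property comes from Lemma \ref{quotient-facemaps}, and the count $2^{p-1}$ comes from choosing the tail $\{m_2,\ldots,m_t\}$ as a subset of $\{1,\ldots,p-1\}$. The paper states this in more compressed form, but the content is identical.
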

\begin{proof}
Proposition \ref{quotient-disc} shows that $(F_p/F_{p-1})_r$ is the direct sum of the $R$ spans of the sets $\{T_xV(\mathbf{m} + n - 1 -r) \otimes 1 : x \in (X^{A_{n-1}}_{\{s_{1+t},\ldots ,s_{n-r-2+t}\}})^{-1}\}$, for $p=m_1 > \ldots   > m_t \geq 1$, and that this is in fact the direct sum of the $\mathcal{H}_n$ spans of the elements $V(\mathbf{m} + n - 1 -r)$. It remains to show that these are subcomplexes: but this is clear from Lemma \ref{quotient-facemaps}.
\end{proof}

While $F_p/F_{p-1}$ has now been decomposed into the $M_\mathbf{m}$, these subcomplexes are not easily compared to $\mathcal{D}(n-p)$. In order to clarify the situation, we now introduce new complexes $\mathcal{M}^t$, each constructed to be isomorphic to a suspension of $M_{m_1,\ldots,m_t}$.

For $m < n-t$, let $\mathcal{H}_m^t$ be the subalgebra of $\mathcal{H}_n \leq \mathcal{HB}_n$ generated by
\[  T_{1+t}, T_{2+t},\ldots,T_{m+t} \]
so that $\mathcal{H}_m^t$ is a reindexing of $\mathcal{H}_m$ by $+t$. In particular, it is a subalgebra corresponding to a parabolic subgroup of the Coxeter group of type $B_n$.

Let $\mathcal{M}^t$ be given in degree $-1 \leq r \leq n-p-1$ by
\begin{align*}
    \mathcal{M}^t_r &= \mathcal{H}_n \otimes_{\mathcal{H}_{n-r-p-1}^t} \mathbb{1}\\
    &= \mathcal{H}_n \otimes_{\langle T_{1+t},\ldots ,T_{n-r-p-2+t}\rangle } \mathbb{1}
\end{align*}
\[\partial_j^r(x\otimes y) = xT_{n-r+j, n-p-r+t} \otimes y, \ \partial^r = (-1)^pq^{-p}\sum_{j=0}^{r}(-1)^jq^{-j}\partial_j^r\]

\begin{lemma}
Fix $p=m_1 > m_2 > \ldots > m_t \geq 1$. The map 
\[\Phi: \ \Sigma^p  \mathcal{M}^t \longrightarrow M_{m_1,\ldots  ,m_t}\]
given in degree $p+r$ ($-1\leq r \leq n-p-1$) by $x\otimes 1 \mapsto xV(\mathbf{m} + n - 1 - (p+r)) \otimes 1$ is well-defined, and an isomorphism for each $r$.
\end{lemma}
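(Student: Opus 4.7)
The plan is to verify in turn that $\Phi$ is well-defined, is a chain map, and is bijective in each degree. Since $\Phi$ is defined by left $\mathcal{H}_n$-linearity from $1 \otimes 1 \mapsto V(\mathbf{m}+n-1-(p+r)) \otimes 1$, well-definedness amounts to checking that for each generator $T_{k+t}$ of $\mathcal{H}^t_{n-r-p-1}$ (that is, for $1 \leq k \leq n-r-p-2$), we have $T_{k+t} V(\mathbf{m}+n-1-(p+r)) \otimes 1 = q \cdot V(\mathbf{m}+n-1-(p+r)) \otimes 1$ inside $\mathcal{HB}_n \otimes_{\mathcal{HB}_{n-(p+r)-1}}\mathbb{1}$. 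The first entry of $\mathbf{m}+n-1-(p+r)$ is $n-1-r$, and since $m_1 > \ldots > m_t \geq 1$ forces $p = m_1 \geq t$, we have $t < k+t < n-1-r$. Lemma \ref{second-calc} then yields $T_{k+t} V(\mathbf{m}+n-1-(p+r)) = V(\mathbf{m}+n-1-(p+r))T_k$; since $k \leq n-p-r-2$ the factor $T_k$ lies in $\mathcal{HB}_{n-(p+r)-1}$ and is absorbed across the tensor product as multiplication by $q$, as required.

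For the chain map property, by construction the differential of $\Sigma^p\mathcal{M}^t$ at level $p+r$ equals that of $\mathcal{M}^t$ at level $r$, which carries the prefactor $(-1)^p q^{-p}$. By Lemma \ref{quotient-facemaps}, the face maps $\partial^{p+r}_j$ of $M_\mathbf{m}$ vanish on the generator $V(\mathbf{m}+n-1-(p+r))$ for $j < p$, so the surviving part of the differential on $M_\mathbf{m}$, reindexed by $j = j'+p$, naturally factors out exactly the same prefactor $(-1)^p q^{-p}$. The formula $\partial^{p+r}_{j'+p} V(\mathbf{m}+n-1-(p+r)) = T_{n-r+j',\, n-(p+r)+t}\, V(\mathbf{m}+n-(p+r))$ from the same lemma then matches the image under $\Phi$ of $\partial^r_{j'}(1 \otimes 1) = T_{n-r+j',\, n-p-r+t} \otimes 1$, and combined with $\mathcal{H}_n$-linearity this shows $\Phi$ commutes with differentials.

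For bijectivity, both source and target in degree $p+r$ are free $R$-modules with bases indexed by the same set. Applying Proposition \ref{Hw-free-Hj} to the inclusion $\mathcal{H}^t_{n-r-p-1} \subseteq \mathcal{H}_n$ (which corresponds to the parabolic inclusion $\langle s_{1+t}, \ldots, s_{n-r-p-2+t}\rangle \subseteq \mathfrak{S}_n$), a basis for $\mathcal{M}^t_r$ is $\{T_x \otimes 1 : x \in (X^{\mathfrak{S}_n}_{\{s_{1+t}, \ldots, s_{n-r-p-2+t}\}})^{-1}\}$. Proposition \ref{quotient-disc}, evaluated at degree $p+r$, gives precisely the corresponding basis $\{T_x V(\mathbf{m}+n-1-(p+r)) \otimes 1\}$ for $(M_\mathbf{m})_{p+r}$ indexed by the same set, and $\Phi$ identifies these bases term by term.

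The principal obstacle is the well-definedness step: every other piece is bookkeeping with previously established results, but the passage across the tensor product requires both the computation of Lemma \ref{second-calc} and the (easily overlooked) inequality $p \geq t$ forced by the strict descent $m_1 > \ldots > m_t \geq 1$. Without this inequality the index $k+t$ could exceed $m_1+n-1-(p+r)-1$ and the rewriting rule of Lemma \ref{second-calc} would fail.
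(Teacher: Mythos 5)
Your argument for well-definedness and degree-wise bijectivity matches the paper's proof exactly: well-definedness is checked generator-by-generator via Lemma~\ref{second-calc} (using precisely the inequality $t \leq p$ you flag to ensure the index stays below $m_1 + n - 1 - (p+r) = n-1-r$), and bijectivity is obtained by matching the free $R$-module bases from Proposition~\ref{Hw-free-Hj} and Proposition~\ref{quotient-disc}. One small remark: the middle paragraph about $\Phi$ being a chain map is not part of this lemma's claim (``an isomorphism for each $r$'' is only degree-wise) and is in fact the content of the subsequent Proposition~\ref{ct-iso}, where the paper uses the commuting square to establish simultaneously that the differentials on $\mathcal{M}^t$ are well-defined and that $\Phi$ is a chain isomorphism; your sketch tacitly assumes those differentials are already well-defined, which is not yet known at this stage.
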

\begin{proof}
To show that is well-defined in degree $p+r$, we must check that for $T_k \in \{T_{1+t},\ldots  ,T_{n-r-p-2+t}\}$, $\Phi(x T_k \otimes y) = \Phi(x \otimes T_k y)$. By Lemma \ref{second-calc}, since we have that $t<k<n-r-p-1+t \leq m_1 + n - 1 - (p+r)$, we obtain:
\begin{align*}
    \Phi(xT_k \otimes y) &= xT_k V(\mathbf{m} + n - 1 -(p+r)) \otimes y\\
    &= x  V(\mathbf{m} + n - 1 -(p+r))T_{k-t}\otimes y \\
    &= q(xV(\mathbf{m} + n - 1 -(p+r))\otimes y)
\end{align*}
which is equal to $x V(\mathbf{m} + n - 1 -(p+r))\otimes T_k y = \Phi(x\otimes T_k y)$.

By Proposition \ref{Hw-free-Hj}, $(\Sigma^p \mathcal{M}^t)_{p+r} = \mathcal{H}_n \otimes_{\mathcal{H}_{n-r-p-1}^t} \mathbb{1}$ is free over $R$ with basis $\{T_x : x \in (X^{A_{n-1}}_{s_{1+t},\ldots  ,s_{n-r-p-2+t}})^{-1}\}$. This is sent under $\Phi$ to $\{T_xV(\mathbf{m} + n - 1 - (p+r)) : x \in (X^{A_{n-1}}_{s_{1+t},\ldots  ,s_{n-r-p-2+t}})^{-1}\}$, which is an $R$ basis of $M_{m_1,\ldots  ,m_t}$ in degree $p+r$ by Proposition \ref{quotient-disc}. Hence $\Phi$ is an isomorphism in each degree.
\end{proof}

\begin{prop}\label{ct-iso}
The map $\Phi$ is an isomorphism of chain complexes.
\end{prop}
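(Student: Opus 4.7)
The plan is to exploit the preceding lemma, which already establishes that $\Phi$ is a well-defined $R$-module isomorphism in each degree. What remains is to verify that $\Phi$ commutes with the differentials, and the key is that the vanishing of the first $p$ face maps on $M_\mathbf{m}$ (Lemma \ref{quotient-facemaps}) matches perfectly with the re-indexing of face maps in the suspension $\Sigma^p \mathcal{M}^t$.

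First I would observe that both differentials are $\mathcal{H}_n$-linear (for the target by Lemma \ref{quotient-facemaps}, for the source by construction of $\mathcal{M}^t$), so it suffices to check the identity $\Phi \circ \partial^{p+r}_{\Sigma^p \mathcal{M}^t} = \partial^{p+r}_{M_\mathbf{m}} \circ \Phi$ on the generator $1 \otimes 1 \in (\Sigma^p \mathcal{M}^t)_{p+r} = \mathcal{M}^t_r$. Unwinding definitions, applying $\Phi$ after the source differential yields
\[ (-1)^p q^{-p} \sum_{j=0}^{r} (-1)^j q^{-j} \, T_{n-r+j,\, n-p-r+t}\, V(\mathbf{m}+n-1-(p+r-1)) \otimes 1. \]

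On the other side, I would use Lemma \ref{quotient-facemaps} to write
\[ \partial^{p+r}_{M_\mathbf{m}} V(\mathbf{m}+n-1-(p+r)) = \sum_{j=p}^{p+r} (-1)^j q^{-j}\, T_{n-(p+r)+j,\, n-(p+r)+t}\, V(\mathbf{m}+n-1-(p+r-1)), \]
the sum starting at $j=p$ because the lower face maps vanish in the quotient. Substituting $j = p+j'$ with $j' \in \{0,1,\ldots,r\}$ converts this to exactly the expression above. Thus the two differentials agree on $1 \otimes 1$, and by $\mathcal{H}_n$-linearity they agree everywhere.

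No step here is really a serious obstacle; the content of the proposition is essentially bookkeeping, and the sign $(-1)^p$ and factor $q^{-p}$ built into the definition of $\partial^r_{\mathcal{M}^t}$ were put there precisely so that this index shift would work cleanly. The only minor subtlety is making sure the index $T_{n-(p+r)+j,\,n-(p+r)+t}$ from Lemma \ref{quotient-facemaps} matches $T_{n-r+j',\,n-p-r+t}$ after the substitution $j=p+j'$, which is an immediate arithmetic check.
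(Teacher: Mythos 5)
Your proof is correct and follows essentially the same route as the paper: both reduce to comparing the differentials on the generator $1\otimes 1$ via Lemma~\ref{quotient-facemaps}, with the index shift $j = p + j'$ producing precisely the $(-1)^p q^{-p}$ prefactor built into $\partial^r_{\mathcal{M}^t}$. The one small thing the paper does that you elide is to use this very computation to conclude that the differential on $\mathcal{M}^t$ is well-defined in the first place (rather than presupposing it, as your opening sentence implicitly does), though that well-definedness is in any case straightforward to check directly.
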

\begin{proof}
We show that the following diagram commutes. This will show that the differentials in $\mathcal{M}^t$ are well-defined, and that $\Phi$ is an isomorphism of chain complexes.

\[\begin{tikzcd}
	{(\Sigma^p \mathcal{M}^t)_{p+r}} &&&& {(M_{\mathbf{m}})_{p+r}} \\
	\\
	\\
	{(\Sigma^p \mathcal{M}^t)_{p+r-1}} &&&& {(M_{\mathbf{m}})_{p+r-1}}
	\arrow["{\Phi_{p+r}}", from=1-1, to=1-5]
	\arrow["{\partial^r_j}"', from=1-1, to=4-1]
	\arrow["{\Phi_{p+r-1}}"', from=4-1, to=4-5]
	\arrow["{\partial^{p+r}_{p+j}}", from=1-5, to=4-5]
\end{tikzcd}\]

$\Phi$ is $\mathcal{H}_n$ linear, so it suffices to consider $1 \otimes 1$. In degree $p+r$, going clockwise:
\begin{align*}
    1\otimes 1 &\mapsto V(\mathbf{m} - n - 1 - (p+r)) \otimes 1 \\
    &\mapsto \partial_{p+j}^{p+r}V(\mathbf{m} - n - 1 - (p+r)) \otimes 1\\
    &=\begin{cases}
        T_{n-r+j, n-p-r+t} V(\mathbf{m} + n - 1 - (p+r-1)) & j\geq 0 \\
        0 & j < 0
    \end{cases}
\end{align*}
by Lemma \ref{quotient-facemaps}. Going anticlockwise, $\partial^r_j = 0$ for $j<0$, so the diagram commutes for $j<0$. 
For $j>0$:
\begin{align*}
    1\otimes 1 &\mapsto T_{n-r+j, n-p-r+t} \otimes 1 \\
    &\mapsto T_{n-r+j, n-p-r+t} V(\mathbf{m} + n - 1 - (p+r-1))
\end{align*}
as required. Thus the face maps defined above for $\mathcal{M}^t$ correspond to the face maps in $F_p/F_{p-1}$, so are well-defined. The differential in $F_p/F_{p-1}$ then corresponds to
\[ \sum_{j=0}^{p-1} (-1)^jq^{-j} \cdot 0 + \sum_{j=p}^{p+r} (-1)^jq^{-j} \partial_{j-p}^r \]
\[ = (-1)^pq^{-p}\sum_{j=0}^r (-1)^jq^{-j} \partial_{j}^r\]
which is the $\partial^r$ defined for $\mathcal{M}^t$---thus this is a chain map, and $\Phi$ is an isomorphism of chain complexes.
\end{proof}

Putting everything thus far together:

\begin{prop}\label{binomial-decomp}
There is an isomorphism of $\mathcal{H}_n$-modules
\[F_p/F_{p-1} \cong \bigoplus_{t=1}^{p} \binom{p-1}{t-1} \ \Sigma^p \mathcal{M}^t \]
\end{prop}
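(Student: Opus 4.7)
The plan is to assemble the proposition directly from the two preceding results: the direct sum decomposition of $F_p/F_{p-1}$ into the subcomplexes $M_{\mathbf{m}}$ (Lemma \ref{quotient-direct-sum}), and the identification of each $M_{\mathbf{m}}$ with a suspension of $\mathcal{M}^t$ (Proposition \ref{ct-iso}). The remaining work is a combinatorial count of how many tuples $\mathbf{m}$ have a given length.

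First I would apply Lemma \ref{quotient-direct-sum} to write
\[ F_p/F_{p-1} \ \cong \ \bigoplus_{\mathbf{m}} M_{\mathbf{m}} \]
as $\mathcal{H}_n$-subcomplexes, where the direct sum ranges over all strictly decreasing tuples $\mathbf{m} = (m_1,\ldots,m_t)$ with $p = m_1 > m_2 > \ldots > m_t \geq 1$ and $t \geq 1$. Next, I would reindex this direct sum by the length $t$ of the tuple. Since the first entry $m_1 = p$ is forced, specifying $\mathbf{m}$ of length $t$ is equivalent to choosing the remaining $t-1$ entries $m_2 > \ldots > m_t$ as a subset of $\{1,2,\ldots,p-1\}$, giving exactly $\binom{p-1}{t-1}$ tuples. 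Note that $t$ ranges from $1$ (where only $\mathbf{m}=(p)$ occurs) up to $t=p$ (where $\mathbf{m}=(p,p-1,\ldots,1)$).

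Finally, Proposition \ref{ct-iso} supplies, for each such $\mathbf{m}$ of length $t$, an isomorphism of chain complexes of $\mathcal{H}_n$-modules $\Sigma^p\mathcal{M}^t \xrightarrow{\cong} M_{\mathbf{m}}$. Crucially, while the isomorphism itself depends on $\mathbf{m}$ (via the element $V(\mathbf{m} + n - 1 - (p+r))$ used in its definition), the source complex $\Sigma^p\mathcal{M}^t$ depends only on the length $t$. Grouping the summands by $t$ therefore yields
\[ F_p/F_{p-1} \ \cong \ \bigoplus_{t=1}^{p} \ \bigoplus_{\substack{p=m_1>\ldots>m_t\geq 1}} M_{\mathbf{m}} \ \cong \ \bigoplus_{t=1}^{p} \binom{p-1}{t-1}\, \Sigma^p \mathcal{M}^t, \]
as claimed.

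There is no real obstacle here: the substantive content is already in Lemma \ref{quotient-direct-sum} (the decomposition) and Proposition \ref{ct-iso} (the identification of each summand), and this proposition is essentially the bookkeeping step that packages them together. The only point meriting care is verifying that the counting of tuples of a given length is correct, and that one genuinely obtains a direct sum of copies of $\Sigma^p \mathcal{M}^t$ rather than a twisted variant—this follows because the target $\Sigma^p \mathcal{M}^t$ in Proposition \ref{ct-iso} is independent of the particular $\mathbf{m}$ of length $t$.
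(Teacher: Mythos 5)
Your proof is correct and follows exactly the paper's argument: invoke Lemma \ref{quotient-direct-sum} for the decomposition of $F_p/F_{p-1}$ into the $M_{\mathbf{m}}$, use Proposition \ref{ct-iso} to identify each $M_{\mathbf{m}}$ with $\Sigma^p\mathcal{M}^t$, and count the tuples of length $t$ as $\binom{p-1}{t-1}$. The only addition over the paper's own write-up is your (correct) observation that $\Sigma^p\mathcal{M}^t$ depends only on $t$ and not on the particular $\mathbf{m}$, which the paper leaves implicit.
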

\begin{proof}
There are exactly $\binom{p-1}{t-1}$ submodules of the form $M_{m_1,\ldots  ,m_t}$ with $p=m_1>\ldots  >m_t\geq1$ for fixed $t$, with $1 \leq t \leq p$. By Lemma \ref{quotient-direct-sum}, $F_p/F_{p-1}$ is the direct sum over all of these submodules. Then by Proposition \ref{ct-iso}, each $M_{m_1,\ldots  ,m_t}$ is isomorphic to the suspension of $\mathcal{M}^t$.
\end{proof}

Proposition \ref{binomial-decomp} motivates the development up to this point: with this result, it remains only to prove that $H_d(\mathcal{M}^t) = 0$ for $d\geq n - p -2$. Note that the leading factor of $(-1)^pq^{-p}$ in the definition of $\partial^r$ does not affect the homology of $\mathcal{M}^t$, since $q \in R^\times$, so we omit it from now.

 Referring back to Lemma \ref{C-quotient-lemma}, $\Sigma^p \mathcal{M}^t$ is playing the part of $M_\mathbf{v}$, and $\mathcal{D}_n$ that of $\mathcal{C}_n$. The naive isomorphism to look for, in imitation of \ref{C-quotient-lemma}, is then between $\mathcal{M}^t$ and $\mathcal{H}_n \otimes_{\mathcal{H}_{n-p}} \mathcal{D}(n-p)$. But in degree $r$, $\mathcal{M}^t$ is the module
$\mathcal{H}_n \otimes_{\mathcal{H}_{n-r-p-1}^t} \mathbb{1}$, whereas $\mathcal{H}_n \otimes_{\mathcal{H}_{n-p}} \mathcal{D}(n-p)$ is
\[ \mathcal{H}_n \otimes_{\mathcal{H}_{n-p}} (\mathcal{H}_{n-p} \otimes_{H_{n-p-r-1}} \mathbb{1}) \cong  \mathcal{H}_n \otimes_{\mathcal{H}_{n-p-r-1}} \mathbb{1} \]

In the latter, the tensor product is over $\mathcal{H}_{n-p-r-1}$, and in the former over $\mathcal{H}_{n-p-r-1}^t$. We thus consider instead a reindexed version of $\mathcal{D}(n-p)$ to account for this difference.

Let $\mathcal{D}^t(n-p)$ be the complex given in degree $r$ by
\[\mathcal{D}^t(n-p)_r = \mathcal{H}^t_{n-p} \otimes_{\mathcal{H}^t_{n-p-r-1}} \mathbb{1}\]
\[\partial^r_j(x\otimes y) = xT_{n-p-r+j+t, n-p-r+t} \otimes y, \quad \partial^r = \sum_{j=0}^r (-1)^j q^{-j}\partial^r_j \]

This is simply $\mathcal{D}(n-p)$ with everything reindexed by a factor of $+t$, so evidently  $\mathcal{D}^t(n-p) \cong \mathcal{D}(n-p)$ as chain complexes of $R$-modules, and $\mathcal{D}^t(n-p)$ is an $\mathcal{H}^t_{n-p}$ module.

Instead of $\mathcal{H}_n \otimes_{\mathcal{H}_{n-p}} \mathcal{D}(n-p)$, consider now $\mathcal{H}_n \otimes_{\mathcal{H}^t_{n-p}} \mathcal{D}^t(n-p)$. In degree $r$, this is the module:
\[ \mathcal{H}_n \otimes_{\mathcal{H}^t_{n-p}} (\mathcal{H}^t_{n-p} \otimes_{\mathcal{H}^t_{n-p-r-1}} \mathbb{1}) \cong  \mathcal{H}_n \otimes_{\mathcal{H}^t_{n-p-r-1}} \mathbb{1} \]
the same as $\mathcal{M}^t$. Unfortunately, the face maps in $\mathcal{H}_n \otimes_{\mathcal{H}^t_{n-p}} \mathcal{D}^t(n-p)$ involve right multiplication by $T_{n-p-r+j+t, n-p-r+t}$, while those in $\mathcal{M}^t$ involve right multiplication by $T_{n-r+j, n-p-r+t}$. If we can find a degree-wise isomorphism that accounts for this difference, then we will be done:
we know that $\mathcal{D}^t(n-p) \cong \mathcal{D}(n-p)$ is highly acyclic, and $\mathcal{H}_n$ is free over $\mathcal{H}^t_{n-p}$, so  $\mathcal{H}_n \otimes_{\mathcal{H}^t_{n-p}} \mathcal{D}^t(n-p)$ is also highly acyclic.

As we shall see, right multiplication by certain elements of $\mathcal{H}_n$ is a well-defined isomorphism (over $R$) $\mathcal{H}_n \otimes_{\mathcal{H}^t_{n-p-r-1}} \mathbb{1} \xrightarrow{} \mathcal{H}_n \otimes_{\mathcal{H}^t_{n-p-r-1}} \mathbb{1}$. Therefore we look for a sequence of elements $\xi(r) \in \mathcal{H}_n$ such that right multiplication by $\xi(r)$ in degree $r$ accounts for the difference in face maps.

Let the elements $\xi(r) \in \mathcal{H}_n$, $-1\leq r < n-p$ be:
\begin{align*}
     \xi(r) \ = \ & T_{n-r-1}T_{n-r}\ldots  T_{n-1}  \\
     & \boldsymbol{\cdot} T_{n-r-2}T_{n-r-1}\ldots  T_{n-2}   \boldsymbol{\cdot} \ \ldots  \ldots  \ldots \\
     & \boldsymbol{\cdot} T_{n-r-(p-t)}T_{n-r-(p-t)+1}\ldots  T_{n-(p-t)} 
\end{align*}
the product of $(r+1)(p-t)$ generators, with the convention that when $r=-1$ or $p=t$ this is interpreted as the empty product $1$.

\begin{lemma}
For $\xi(r)$ defined above and $0\leq r < n-p$, $0\leq j \leq r$, $1 \leq t \leq p \leq n$:
\[ T_{n-r+j, n-r-p+t} \ \xi(r-1) = \xi(r) \ T_{n-p-r+j+t, n-r-p+t} \]
\end{lemma}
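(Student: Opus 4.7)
The plan is to argue by induction on $j \geq 0$. The heart of the argument is a ``commutation lemma'' for $\xi(r)$: for every $k$ with $n-r \leq k \leq n-1$,
\[
T_k \cdot \xi(r) \;=\; \xi(r) \cdot T_{k-(p-t)}.
\]
I prove this by pushing $T_k$ through the rows $V_1, V_2, \ldots, V_{p-t}$ of $\xi(r)$ one at a time, where $V_l = T_{n-r-l} T_{n-r-l+1} \cdots T_{n-l}$. The relevant row identity is: if $n-r-l+1 \leq k' \leq n-l$, then $T_{k'} V_l = V_l T_{k'-1}$. To see this, commute $T_{k'}$ past the factors of $V_l$ with indices at most $k'-2$, apply the braid relation $T_{k'} T_{k'-1} T_{k'} = T_{k'-1} T_{k'} T_{k'-1}$ at the centre, and then commute the resulting $T_{k'-1}$ past the factors with indices at least $k'+1$. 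The lower bound $k' \geq n-r-l+1$ is essential: at $k' = n-r-l$ the quadratic Hecke relation $T_{n-r-l}^2 = (q-1)T_{n-r-l} + q$ would intrude and the identity would fail. As $T_k$ is pushed successively through $V_1, V_2, \ldots, V_{p-t}$, the index at the front of row $V_l$ becomes $k-l+1$, and the requirement $n-r-l+1 \leq k-l+1 \leq n-l$ reduces exactly to $n-r \leq k \leq n-1$. Iterating across all $p-t$ rows produces the stated shift.

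For the base case $j = 0$, the right-hand side factor $T_{n-r-p+t, n-r-p+t}$ is the empty product $1$, and it remains to show $T_{n-r, n-r-p+t} \cdot \xi(r-1) = \xi(r)$. Write the descending product on the left as $T_{n-r-1} T_{n-r-2} \cdots T_{n-r-(p-t)}$, and write $\xi(r-1) = V_1' V_2' \cdots V_{p-t}'$ with $V_l' = T_{n-r-l+1}\cdots T_{n-l}$. Each generator $T_{n-r-k}$ commutes past $V_1', \ldots, V_{k-1}'$ (whose indices all exceed $n-r-k$ by at least $2$) and then assembles with $V_k'$ via the trivial prepending $T_{n-r-k} V_k' = V_k$. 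Iterating for $k = 1, 2, \ldots, p-t$ collapses the left-hand side to $V_1 V_2 \cdots V_{p-t} = \xi(r)$.

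For the inductive step, assume the identity for $j-1$. Factor
\[
T_{n-r+j, n-r-p+t} \;=\; T_{n-r+j-1} \cdot T_{n-r+j-1, n-r-p+t},
\]
apply the induction hypothesis to obtain
\[
T_{n-r+j, n-r-p+t}\,\xi(r-1) \;=\; T_{n-r+j-1}\cdot\xi(r) \cdot T_{n-p-r+j-1+t, n-r-p+t},
\]
and then apply the commutation lemma with $k = n-r+j-1 \in [n-r, n-1]$ (valid since $1 \leq j \leq r$) to move $T_{n-r+j-1}$ across $\xi(r)$, producing $\xi(r) \cdot T_{n-p-r+j-1+t} \cdot T_{n-p-r+j-1+t, n-r-p+t}$. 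Reassembling the descending product on the right gives $\xi(r) \cdot T_{n-p-r+j+t, n-r-p+t}$, as required.

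The principal technical obstacle is the careful bookkeeping of indices to ensure that every generator pushed through a row of $\xi(r)$ sits strictly inside that row's braidable range, avoiding the boundary factor $T_{n-r-l}$ where the quadratic Hecke relation would intrude; this is what pins down the range $n-r \leq k \leq n-1$ in the commutation lemma and explains why $j=0$ has to be handled as a separate base case rather than being absorbed into the induction.
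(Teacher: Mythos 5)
Your proof is correct but structured quite differently from the paper's. The paper inducts on the number of rows $p-t$ of $\xi$: it peels off the first row of $\xi(r-1)$, pushes the descending chain $T_{n-r+j,n-r}$ through it to pick up a shift of $-1$, and then invokes the inductive hypothesis (with $n' = n-1$, $p' = p-1$) on the remaining rows. You instead induct on $j$, the length of the descending chain, reducing everything to a clean ``commutation lemma'' $T_k\,\xi(r) = \xi(r)\,T_{k-(p-t)}$ for $n-r \leq k \leq n-1$, which moves a single generator through all $p-t$ rows of $\xi(r)$ at once. The two inductions are in a sense dual---yours peels off factors of the descending product, the paper's peels off rows of $\xi$---but they bottom out on the same row-level braid manipulation (pushing one $T_{k'}$ through one ascending row with a shift). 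Your version packages that manipulation once and for all in a reusable lemma, which makes the inductive step shorter and clarifies exactly where the boundary of validity lies (the range $n-r\leq k\leq n-1$ avoiding the quadratic relation, which is also what forces $j=0$ into a separate base case).

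One small slip worth fixing: in the base case you say ``iterating for $k = 1, 2, \ldots, p-t$'' collapses $T_{n-r-1}\cdots T_{n-r-(p-t)}\cdot V_1'\cdots V_{p-t}'$ to $\xi(r)$, but $T_{n-r-1}$ cannot be moved first since it sits at the far left behind $T_{n-r-2},\ldots,T_{n-r-(p-t)}$, with which it does \emph{not} commute. The iteration must run $k = p-t, p-t-1,\ldots,1$: push $T_{n-r-(p-t)}$ (adjacent to $V_1'$) through $V_1',\ldots,V_{p-t-1}'$ and prepend to $V_{p-t}'$, then $T_{n-r-(p-t)+1}$, and so on. The commuting bounds you state ($V_l'$ has indices $\geq n-r-l+1$, differing from $n-r-k$ by $k-l+1\geq 2$ whenever $l<k$) are exactly right; only the stated order of the loop needs to be reversed.
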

\begin{proof}
When $r=0$, we must have $j=0$ so that $T_{n-p-r+j+t, n-r-p+t}=T_{n-p+1+t,n+1-p+r}=1$, and the statement is just the definition of $\xi(1)$. Suppose then $r\geq 1$, and proceed via induction on $p-t\geq 0$.

For $p=t$, by definition $\xi(r) = 1$ for all $r$ and there is nothing to show.

Assume $p>t$ and consider the first $r$ elements of $\xi(r-1)$. Since $p-t \geq 1$ implies $n-r-1 \geq n-r-p+t$:
\[T_{n-r+j, n-r-p+t} = T_{n-r+j,n-r}T_{n-r-1}T_{n-r-1, n-r-p+t} \]
and $T_{n-r-1, n-r-p+t}$ commutes with $T_{n-r}T_{n-r+1}\ldots  T_{n-1}$, so:
\[ T_{n-r+j, n-r-p+t} (T_{n-r}T_{n-r+1}\ldots  T_{n-1})\]
\[ = T_{n-r+j,n-r}(T_{n-r-1}T_{n-r}T_{n-r+1}\ldots  T_{n-1})T_{n-r-1, n-r-p+t}\]
We claim that 
\[ T_{n-r+j,n-r}(T_{n-r-1}T_{n-r}T_{n-r+1}\ldots  T_{n-1}) = (T_{n-r-1}T_{n-r}T_{n-r+1}\ldots  T_{n-1})T_{n-r+j-1,n-r-1}. \]
Indeed, applying the braid relation $T_{n-r}T_{n-r-1}T_{n-r}=T_{n-r-1}T_{n-r}T_{n-r-1}$ transforms the left hand side into
\[T_{n-r+j,n-r+1}(T_{n-r-1}T_{n-r}T_{n-r-1})(T_{n-r+1}\ldots  T_{n-1})\]
and using that $T_i$ commutes with $T_j$ for $|i-j|>1$ this becomes
\[ T_{n-r-1}T_{n-r+j,n-r+1}(T_{n-r}T_{n-r+1}\ldots  T_{n-1})T_{n-r-1}. \]
Repeatedly performing the same moves proves the claim.

We have shown that:
\begin{align*}
    T_{n-r+j, n-r-p+t} \xi(r-1) = & (T_{n-r-1}T_{n-r}\ldots  T_{n-1})T_{n-r+j-1,n-r-p+t} \\
    ( & \boldsymbol{\cdot} T_{n-r-1}T_{n-r}\ldots  T_{n-2}   \boldsymbol{\cdot} \ \ldots  \ldots  \ldots \\
     & \boldsymbol{\cdot} T_{n-r+1-(p-t)}T_{n-r-(p-t)+2}\ldots  T_{n-(p-t)}) 
\end{align*}
and may now apply the inductive hypothesis with $n' = n-1$, $p' = p - 1$ to obtain:
\begin{align*}
    T_{n-r+j, n-r-p+t} \xi(r-1) = & T_{n-r-1}T_{n-r}\ldots  T_{n-1}   \boldsymbol{\cdot} \ldots  \ldots  \ldots \\
     & \boldsymbol{\cdot} T_{n-r-(p-t)}T_{n-r-(p-t)+1}\ldots  T_{n-(p-t)}  T_{n-(p-t)} \\
     & \boldsymbol{\cdot} T_{n-p+r+j, n-r-p+t}
\end{align*}
as required. \qedhere
\end{proof}

We may finally complete the argument that multiplication by $\xi(r)$ is a well-defined isomorphism accounting for the difference in face maps.

\begin{prop}\label{mt-iso}
There is an isomorphism $\mathcal{M}^t \longrightarrow \mathcal{H}_n \otimes_{\mathcal{H}^t_{n-p}} \mathcal{D}^t(n-p)$ of chain complexes of $\mathcal{H}_n$ modules.
\end{prop}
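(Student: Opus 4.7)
The plan is to exhibit the chain isomorphism by right multiplication by the elements $\xi(r)$, that is, in degree $r$ to define
\[ \Psi_r : \mathcal{M}^t_r = \mathcal{H}_n \otimes_{\mathcal{H}^t_{n-p-r-1}} \mathbb{1} \longrightarrow \bigl(\mathcal{H}_n \otimes_{\mathcal{H}^t_{n-p}} \mathcal{D}^t(n-p)\bigr)_r \cong \mathcal{H}_n \otimes_{\mathcal{H}^t_{n-p-r-1}} \mathbb{1} \]
by $x \otimes 1 \mapsto x\xi(r) \otimes 1$, extended $\mathcal{H}_n$-linearly. This is essentially the only natural candidate, since the preceding lemma was engineered precisely to make the chain map condition hold.

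First I would check well-definedness. Since source and target share the same underlying left $\mathcal{H}_n$-module, only the tensor relation needs attention. On the generators $T_{k+t}$ of $\mathcal{H}^t_{n-p-r-1}$ (with $1 \leq k \leq n-p-r-2$), every generator $T_j$ occurring in $\xi(r)$ satisfies $j \geq n-r-(p-t)$, while $k+t \leq n-r-p-2+t$, so $j - (k+t) \geq 2$. Thus $T_{k+t}$ commutes with every factor of $\xi(r)$, hence with $\xi(r)$ itself, giving
\[ x T_{k+t} \xi(r) \otimes 1 = x \xi(r) T_{k+t} \otimes 1 = q\cdot x \xi(r) \otimes 1, \]
matching $\Psi_r(x \otimes T_{k+t}\cdot 1)$.

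Next, the chain map property is immediate from the preceding lemma: applying $\Psi_{r-1}\partial^{\mathcal{M}^t}_j$ to $x \otimes 1$ yields $x T_{n-r+j,\,n-r-p+t}\,\xi(r-1) \otimes 1 = x\xi(r)\,T_{n-p-r+j+t,\,n-r-p+t} \otimes 1$, which is exactly $\partial_j\Psi_r(x\otimes 1)$ on the target side (after using the identification $\mathcal{H}_n \otimes_{\mathcal{H}^t_{n-p}}(\mathcal{H}^t_{n-p} \otimes_{\mathcal{H}^t_{n-p-r-1}} \mathbb{1}) \cong \mathcal{H}_n \otimes_{\mathcal{H}^t_{n-p-r-1}} \mathbb{1}$). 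The signs and powers of $q$ in the two differentials agree term by term, so the signed sums match.

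Finally, each $T_i \in \mathcal{H}_n$ is invertible because the quadratic relation $(T_i+1)(T_i-q)=0$ gives $T_i^{-1} = q^{-1}(T_i + 1 - q)$, so $\xi(r)$ itself is invertible in $\mathcal{H}_n$. The map $x \otimes 1 \mapsto x\xi(r)^{-1} \otimes 1$ is well-defined by the same commutation argument and is a two-sided inverse to $\Psi_r$, so $\Psi_r$ is bijective in each degree. There are no real obstacles: once $\xi(r)$ has been produced and the key intertwining relation of the preceding lemma established, everything follows from elementary checks and the invertibility of the $T_i$.
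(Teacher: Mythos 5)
Your proposal is correct and follows essentially the same route as the paper: define $\Psi_r$ by right multiplication by $\xi(r)$, check well-definedness by noting every index in $\xi(r)$ is at least two greater than the largest index in $\mathcal{H}^t_{n-p-r-1}$, use the preceding intertwining lemma for the chain map property, and invoke invertibility of the $T_i$ (hence of $\xi(r)$) for bijectivity. The steps and computations line up with the paper's argument, with only cosmetic differences in presentation.
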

\begin{proof}
Both complexes are concentrated in degrees $-1 \leq r \leq n-p-1$, so we work only in this range. In degree $r$, as discussed $\mathcal{M}^t$ is $\mathcal{H}_n \otimes_{\mathcal{H}^t_{n-p-r-1}} \mathbb{1}$ and $\mathcal{H}_n \otimes_{\mathcal{H}^t_{n-p}} \mathcal{D}^t(n-p)$ is $\mathcal{H}_n \otimes_{\mathcal{H}^t_{n-p}} (\mathcal{H}^t_{n-p} \otimes_{\mathcal{H}^t_{n-p-r-1}} \mathbb{1}) \cong  \mathcal{H}_n \otimes_{\mathcal{H}^t_{n-p-r-1}} \mathbb{1}$. We thus define our isomorphism $\Psi$ in degree $r$  by:
\[\Psi_r: \ x \otimes y \longmapsto x \ \xi(r) \otimes y\]

To show that this is well-defined, both sides of this map are $\mathcal{H}_n \otimes_{\mathcal{H}^t_{n-p-r-1}} \mathbb{1}$ so it suffices to show that $xT_k \otimes y$ and $x \otimes T_ky$ have the same image when $1+t \leq k \leq n-p-r-2+t$. The smallest index of $T_i$ occurring in $\xi(r)$ is $n-r-p+t$, so $\xi(r)$ commutes with $T_k$, and
\[ \Psi(xT_k \otimes y) = xT_k \xi(r) \otimes y = x \xi(r) T_k \otimes y =x \xi(r) \otimes T_k y = \Psi(x \otimes T_ky) \]
as required.

Next, we show that $\Psi$ is a chain map: this is true by construction. Explicitly
\begin{align*}
    \Psi_{r-1}(\partial_j^r(1 \otimes 1))&=\Psi_{r-1}(T_{n-r+j, n-r} \otimes 1)\\
    &= T_{n-r+j, n-r-p+t}\xi(r-1) \otimes 1 \\
    &= \xi(r)T_{n-r+j-p+t, n-r -p+t} \otimes 1 \\
    &= \partial_j^r(\xi(r)\otimes 1) = \partial_j^r(\Psi_r (1\otimes 1))
\end{align*}

(Both $\Psi$ and the face maps are $\mathcal{H}_n$ linear, so only $1\otimes 1$ need be considered.)

Finally, $\Psi_r$ is an isomorphism. From the identity $q^{-1}(T_s + 1 - q)T_s = 1$, the generators of any Iwahori-Hecke algebra lie in the group of units. Consequently, $\xi(r)$ (a product of the generators) is also a unit in $\mathcal{H}_n$, so has an inverse $\xi(r)^{-1}$. Then the map from $\mathcal{H}_n \otimes_{\mathcal{H}^t_{n-p}} \mathcal{D}^t(n-p)$ to $\mathcal{M}^t$ given by $x\otimes y \mapsto x\xi(r)^{-1}\otimes y$ is well defined for the same reason as $\Psi$ (the generators appearing in $\xi(r)^{-1}$ will be the same as those in $\xi(r)$), is $\mathcal{H}_n$-linear, and is a two-sided inverse to $\Psi$.
\end{proof}

We may now conclude.

\begin{theorem}\label{D-pm-acyclic}
$H_d(\mathcal{D}^\pm(n))=0$ for $d\leq n -2$.
\end{theorem}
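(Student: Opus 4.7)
The plan is to combine the structural results established in Sections 5 and 6 via the long exact sequences associated to the filtration $F_0 \subseteq F_1 \subseteq \cdots \subseteq F_n = \mathcal{D}^\pm(n)$, reducing the question to the acyclicity of $\mathcal{D}(n-p)$ already proved by Hepworth in \cite{Hepworth2020}. I would proceed by induction on $p$, showing $H_d(F_p) = 0$ throughout the relevant vanishing range (which is exactly the Hepworth range $d \leq n-2$, matching the statement of Theorem \ref{Dpm(n)-acyclic}). The base case $p=0$ is immediate from the identification $F_0 \cong \mathcal{D}(n)$ established in Section 5, which yields precisely the Hepworth vanishing.

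For the inductive step, the short exact sequence of chain complexes $0 \to F_{p-1} \to F_p \to F_p/F_{p-1} \to 0$ produces a long exact sequence in homology, so by the inductive hypothesis it suffices to control $H_d(F_p/F_{p-1})$. Proposition \ref{binomial-decomp} decomposes this quotient as $\bigoplus_t \binom{p-1}{t-1}\, \Sigma^p \mathcal{M}^t$ for $1 \leq t \leq p$, which reduces the problem to bounding $H_d(\Sigma^p \mathcal{M}^t) = H_{d-p}(\mathcal{M}^t)$. Proposition \ref{mt-iso} then identifies $\mathcal{M}^t$ with $\mathcal{H}_n \otimes_{\mathcal{H}^t_{n-p}} \mathcal{D}^t(n-p)$; since $\mathcal{D}^t(n-p)$ is merely a reindexing of the Hepworth complex $\mathcal{D}(n-p)$, its homology is controlled by the bound $d-p \leq (n-p)-2$. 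The freeness of $\mathcal{H}_n$ as a right $\mathcal{H}^t_{n-p}$-module from Proposition \ref{Hw-free-Hj} makes the tensor product exact and preserves this vanishing. The key observation is that the suspension shift $\Sigma^p$ exactly compensates for the reduction from $n$ to $n-p$ letters, so $H_d(\Sigma^p \mathcal{M}^t) = 0$ precisely for $d \leq n-2$, uniformly in $p$ and $t$.

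Piecing the long exact sequences together through $p = 1, 2, \ldots, n$ and invoking $F_n = \mathcal{D}^\pm(n)$ delivers the theorem. At this point there is essentially no genuine obstacle remaining: all the substantive arguments — constructing the distinguished coset representatives $v(\mathbf{m})$ in Section 4, verifying that $F_\bullet$ is a filtration by $\mathcal{H}_n$-subcomplexes in Section 5, decomposing $F_p/F_{p-1}$ via the submodules $M_{\mathbf{m}}$, and producing the two isomorphisms $\Phi$ and $\Psi$ of Section 6 — have already been completed. The remaining work is careful bookkeeping of the vanishing range as it propagates through $n$ iterations of the long exact sequence, taking care that the suspension shift $\Sigma^p$, the restriction of scalars along $\mathcal{H}^t_{n-p} \hookrightarrow \mathcal{H}_n$, and the reduction $n \mapsto n-p$ in the Hepworth bound all combine so that no degradation accumulates in the effective range at any level of the filtration.
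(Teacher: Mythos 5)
Your proposal is correct and follows essentially the same route as the paper: the filtration long exact sequences, the $F_0 \cong \mathcal{D}(n)$ base case, the decomposition $F_p/F_{p-1} \cong \bigoplus_t \binom{p-1}{t-1}\Sigma^p\mathcal{M}^t$, the identification $\mathcal{M}^t \cong \mathcal{H}_n \otimes_{\mathcal{H}^t_{n-p}} \mathcal{D}^t(n-p)$, and Hepworth's vanishing for $\mathcal{D}(n-p)$. You simply make explicit the long-exact-sequence bookkeeping and the observation that the shift $\Sigma^p$ cancels the drop from $n$ to $n-p$, which the paper leaves to the reader.
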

\begin{proof}
By Proposition \ref{ct-iso} and the discussion following it, it suffices to show that $H_d(\mathcal{M}^t) = 0$ for $d\leq n - p -2$.

By Theorem 5.2 in \cite{Hepworth2020}, $H_d(\mathcal{D}(n-p)) = 0$ for $d\leq n - p -2$. $\mathcal{D}(n-p) \cong \ \mathcal{D}^t(n-p)$ as $R$-modules, and $\mathcal{H}_n$ is free as a right $\mathcal{D}^t(n-p)$ module by Proposition \ref{Hw-free-Hj}, so using Proposition \ref{mt-iso}
\[H_d(\mathcal{M}^t) \cong H_d(\mathcal{H}_n \otimes_{\mathcal{H}^t_{n-p}} \mathcal{D}^t(n-p)) \]
\[ = \mathcal{H}_n \otimes_{\mathcal{H}^t_{n-p}} H_d(( \mathcal{D}^t(n-p))\]
is $0$ for $d \leq n - p -2$.
\end{proof}

\section{Proof of Theorem~\ref{main-thm}}
It is now possible to complete the second half of the proof of Theorem \ref{main-thm}, using the complex $\mathcal{D}^\pm (n)$ and Theorem \ref{D-pm-acyclic}. The spectral sequence argument used here is standard in proofs of homological stability (see \cite{wahl2022homological}). Hepworth (\cite{Hepworth2020}) has adapted this spectral sequence argument for the setting of homolgical stability of algebras; we use his method with almost no modification in our case of type $B_n$ algebras.

\begin{lemma}\label{spectral-sequence}
There is a homological spectral sequence $\{ E^r \}_{r\geq 1}$ with:
\begin{itemize}
    \item $E^1_{p,q}$ concentrated in horizontal degrees $p \geq -1$
    \item $E^1_{p,q} = \text{Tor}_q^{\mathcal{HB}_{n-p-1}}(\mathbb{1}, \mathbb{1})$
    \item $d^1: E^1_{p,q} \xrightarrow{} E^1_{p-1,q}$ the stabilisation map for $p$ even and vanishing for $p$ odd
    \item $E_{p,q}^\infty = 0$ in total degrees $p+q \leq n-2$
\end{itemize}
\end{lemma}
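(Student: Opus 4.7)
The plan is to realise the spectral sequence as the column-filtration spectral sequence of a double complex built from $\mathcal{D}^\pm(n)$ and a projective resolution of $\mathbb{1}$, imitating Hepworth's type $\mathcal{A}$ argument (Section 7 of \cite{Hepworth2020}) with the type $\mathcal{B}$ inputs supplied by Proposition \ref{Hw-free-Hj} and Theorem \ref{D-pm-acyclic}. Concretely, fix a projective resolution $P_\bullet \to \mathbb{1}$ of the trivial right $\mathcal{HB}_n$-module and form the double complex $C_{p,q} = P_q \otimes_{\mathcal{HB}_n} \mathcal{D}^\pm(n)_p$ for $-1 \leq p \leq n-1$ and $q \geq 0$, with horizontal differential from $\mathcal{D}^\pm(n)$ and vertical differential from $P_\bullet$. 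The column filtration yields the spectral sequence of the statement.

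For the $E^1$-page, associativity of tensor and $\mathcal{D}^\pm(n)_p = \mathcal{HB}_n \otimes_{\mathcal{HB}_{n-p-1}} \mathbb{1}$ give $C_{p,\bullet} \cong P_\bullet \otimes_{\mathcal{HB}_{n-p-1}} \mathbb{1}$. Since $\mathcal{HB}_n$ is free as a right $\mathcal{HB}_{n-p-1}$-module by Proposition \ref{Hw-free-Hj}, each $P_q$ remains projective after restriction, so $P_\bullet$ serves as a projective resolution of $\mathbb{1}$ over the smaller algebra, giving $E^1_{p,q} = \text{Tor}_q^{\mathcal{HB}_{n-p-1}}(\mathbb{1}, \mathbb{1})$. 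The crux is then the identification of $d^1$: the horizontal differential is $\partial^p = \sum_{j=0}^p (-1)^j q^{-j} \partial^p_j$ with $\partial^p_j(x \otimes 1) = x T_{n-p+j, n-p} \otimes 1$, and the key claim is that each $(\partial^p_j)_*$ agrees on Tor with $q^j$ times the stabilisation map. The factor $q^j$ is natural, since $T_{n-p+j, n-p}$ has reduced length $j$ and therefore acts on $\mathbb{1}$ by $q^j$. Granting this, $d^1$ collapses to $\big(\sum_{j=0}^p (-1)^j\big) \cdot \text{stab}$, which is the stabilisation map for $p$ even and vanishes for $p$ odd.

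For the $E^\infty$-vanishing, I would appeal to the other spectral sequence of the same double complex. Since each $P_q$ is projective and hence flat as a right $\mathcal{HB}_n$-module, horizontal homology equals $P_q \otimes_{\mathcal{HB}_n} H_p(\mathcal{D}^\pm(n))$, which vanishes for $p \leq n-2$ by Theorem \ref{D-pm-acyclic}. Therefore the total complex is acyclic in total degree $\leq n-2$, and both spectral sequences abut to zero there, forcing $E^\infty_{p,q} = 0$ whenever $p + q \leq n-2$. The main obstacle is the $d^1$ identification: every other step is a formal manipulation of the double complex. Since the elements $T_{n-p+j, n-p}$ all lie in the embedded subalgebra $\mathcal{H}_n \subseteq \mathcal{HB}_n$ and the Tor-level manipulations do not involve the extra generator $U$, the argument transfers verbatim from Hepworth's type $\mathcal{A}$ proof, and can be handled by direct citation of \cite{Hepworth2020}.
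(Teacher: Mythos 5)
Your proposal is correct and follows essentially the same route as the paper, which simply cites Section~9 of \cite{Hepworth2020} (Hepworth's double-complex construction and the analysis of $\Xi_*^{-1} \circ d^1 \circ \Xi_*$) and records the three type-$\mathcal{B}$ inputs needed: freeness of $\mathcal{HB}_n$ over $\mathcal{HB}_{n-p-1}$, the matching form of the differential, and the commutation of $T_{n-p+j,n-p}$ with $\mathcal{HB}_{n-p-1}$. One point in your justification is off, though: you argue the $d^1$ computation transfers because the elements $T_{n-p+j,n-p}$ lie in $\mathcal{H}_n$ and so ``the Tor-level manipulations do not involve the extra generator $U$.'' But the Tor groups on the $E^1$-page are over $\mathcal{HB}_{n-p-1}$, which does contain $U$, and both the well-definedness of the face maps and the comparison-theorem argument giving $(\partial^p_j)_* = q^j\cdot\mathrm{stab}$ hinge on $T_{n-p+j,n-p}$ commuting with all of $\mathcal{HB}_{n-p-1}$, including $U$. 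That commutation is precisely the one genuinely new verification in type $\mathcal{B}$ (it is the paper's third bullet), and your phrasing glosses over it rather than checking it.
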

\begin{proof}
The construction of such a spectral sequence for algebra homology is the content of Section 9 in \cite{Hepworth2020}. The same argument works in this case, with $\mathcal{HB}_n$ in place of $\mathcal{H}_n$ and $\mathcal{D}^\pm(n)$ playing the role of $\mathcal{D}(n)$. In particular, we note the following:
\begin{itemize}
    \item $\mathcal{HB}_n$ is free as a right $\mathcal{HB}_{n-p-1}$-module by Proposition \ref{Hw-free-Hj}.
    \item The analysis of $\Xi_*^{-1} \circ d^1 \circ \Xi_*$  remains the same as in \cite{Hepworth2020}, since the differentials in $\mathcal{D}^\pm(n)$ match those in $\mathcal{D}(n)$.
    \item $T_{n-p+j, n-p}$ commutes with $\mathcal{HB}_{n-p-1}$.\qedhere
\end{itemize}
\end{proof}

The argument now concludes by a concise application of $\{ E^r \}$ and the stated properties. Recall that Theorem \ref{main-thm} states that
\[ \text{Tor}^{\mathcal{HB}_{n-1}}_d(\mathbb{1}, \mathbb{1}) \longrightarrow \text{Tor}^{\mathcal{HB}_{n}}_d(\mathbb{1}, \mathbb{1}) \]
is an isomorphism for $2d \leq n-1$.

\begin{proof}[Proof of Theorem~\ref{main-thm}]
By induction on $n$, noting that $n=1,2$ hold trivially. Suppose then $n\geq 3$. We wish to show that the stabilisation map is an isomorphism. By Lemma \ref{spectral-sequence}:
\[ \text{Tor}^{\mathcal{HB}_{n}}_d(\mathbb{1}, \mathbb{1}) = E^1_{-1, d} \qquad \text{Tor}^{\mathcal{HB}_{n-1}}_d(\mathbb{1}, \mathbb{1}) = E^1_{0, d} \]
and $d^1: E^1_{0, d} \xrightarrow{} E^1_{-1, d}$ is the stabilisation map. Thus the statement of the theorem is equivalent to $E^2_{-1,d} = E^2_{0,d} = 0$ for $2d \leq n-1$.

For $u\geq 1$, consider the differential $E^1_{2u, q} \xrightarrow{} E^1_{2u-1, q}$. This is the stabilisation map, so by the inductive hypothesis is an isomorphism for $2q \leq n -2u -1$. The differentials $E^1_{2u-1, q} \xrightarrow{} E^1_{2u-2, q}$ are $0$ by Lemma \ref{spectral-sequence}, hence $E^2_{2u,q} = E^2_{2u-1,q} =0$ for $2q \leq n -2u -1$. From this it follows that for $r\geq 2$:
\[ E^r_{p,q} = 0 \quad \text{in bidegrees } (p, q) \text{ with } \ p \geq 1,  \ 2q \leq n - p -2\]

Now consider which differentials may affect the terms in the columns $p=-1, 0$. For $p=-1$, the differential $d^r$ landing in bidegree $(-1, q)$ originates in $E^r_{-1+r, q-r+1}$. If $2q \leq n-1$, then $E^r_{-1+r, q-r+1} = 0$ by the above---this implies that $E^2_{-1,q} = E^\infty_{-1,q}$ for $2q \leq n-1$. For $p=0$ the differential $d^r$ landing in bidegree $(0, q)$ originates in $E^r_{r, q-r+1}$. When $2q < n-1$ or $r>2$, $2(q-r+1)\leq n-r-2$, so $E^r_{-1+r, q-r+1} = 0$ by the above. When $2q=n-1$ and $r=2$, we have $2(q-r+1)=n-3$, so $E^2_{r, q-r+1}=E^r_{r, q-r+1}=0$, since $E^2_{2u, q}=0$ for $2q\leq n-2u-1$. Hence $E^2_{0,q} = E^\infty_{0,q}$.

By  Lemma \ref{spectral-sequence}, $E^\infty_{-1,q}=E^\infty_{0,q}=0$ for $2q \leq n-1$, since $n \geq 3$. Thus $E^2_{-1,q} = E^2_{0,q} = 0$ for $2q \leq n-2$, as required. \qedhere

\end{proof}

\vspace{20mm}

\bibliography{main.bib}{}
\bibliographystyle{alpha}
\end{document}